\newif\ifsiamart
\newcommand{\email}[1]{\href{mailto:#1}{#1}}
\newenvironment{keywords}{\noindent \textbf{Keywords.}}{}
\newenvironment{AMS}{\noindent \textbf{AMS subject classification.}}{}
\definecolor{darkred}{rgb}{.7,0,0}
\definecolor{darkgreen}{rgb}{.1,.7,0}
\DeclarePairedDelimiter\matnorm{\lVert}{\rVert}
\DeclarePairedDelimiter\vecnorm{\lvert}{\rvert}
\DeclarePairedDelimiter\abs{\lvert}{\rvert}
\DeclarePairedDelimiter\norm{\lVert}{\rVert}
\DeclareMathOperator{\e}{e}
\DeclareMathOperator*{\trace}{tr}
\DeclareMathOperator*{\argmin}{arg\,min}
\DeclareMathOperator{\ran}{Im}
\renewcommand{\t}{\mathsf T}
\newcommand{\kl}[2]{\mathrm {KL} (#1 \| #2)}
\newcommand{\placeholder}{\mathord{\color{black!33}\bullet}}%
\newcommand{\ind}{\perp\!\!\!\!\perp}
\newcommand{\expect}{\mathbf{E}}
\newcommand{\range}[2]{\llbracket #1, #2 \rrbracket}
\newcommand{\mat}{\mathit}
\newcommand{\op}{\mathsf}
\newcommand{\normal}{\mathcal{N}}
\newcommand{\nat}{\mathbf N}
\newcommand{\real}{\mathbf R}
\newcommand{\vect}[1]{#1}
\renewcommand{\d}{\mathrm d}
\renewcommand{\leq}{\leqslant}
\renewcommand{\geq}{\geqslant}
\renewcommand{\le}{\leqslant}
\renewcommand{\ge}{\geqslant}
\newcommand{\cdott}{\placeholder}
\newcommand{\mean}{\vect m}
\newcommand{\pmean}{\widehat{m}}
\newcommand{\Cov}{\mat C}
\newcommand{\pCov}{\widehat{\mat C}}
\newcommand{\hmu}{\widehat{\mu}}
\newcommand{\yd}{{y}^\dag}
\newcommand{\hv}{\widehat{v}}
\newcommand{\hy}{\widehat{y}}
\newtheorem{remark}[theorem]{Remark}
\newtheorem{assumption}{Assumption}
\theoremstyle{plain}
\newtheorem{theorem}{Theorem}
\newtheorem{lemma}[theorem]{Lemma}
\newtheorem{corollary}[theorem]{Corollary}
\newtheorem{proposition}[theorem]{Proposition}
\newtheorem{remark}[theorem]{Remark}
\newtheorem{definition}[theorem]{Definition}
\newtheorem{assumption}{Assumption}
\numberwithin{equation}{section}
\crefname{lemma}{Lemma}{Lemmas}
\crefname{remark}{Remark}{Remarks}
\crefname{assumption}{Assumption}{Assumptions}
\crefname{proposition}{Proposition}{Propositions}
\crefname{equation}{}{}
\Crefname{equation}{Equation}{Equations}
\crefname{section}{Section}{Sections}
\crefname{subsection}{Subsection}{Subsections}
\pgfplotsset{compat=1.14}
\let\oldparagraph=\paragraph
\renewcommand\paragraph[1]{\oldparagraph{#1.}}
\newlist{assumpenum}{enumerate}{5}
\setlist[assumpenum]{font={\bfseries}}
\crefname{assumpenumi}{Assumption}{Assumptions}
\title{%
    The Mean Field Ensemble Kalman Filter: Near-Gaussian Setting
}
\author{%
    J. A. Carrillo\thanks{%
        Mathematical Institute, University of Oxford, UK (\email{carrillo@maths.ox.ac.uk}).
    } \and
    F. Hoffmann\thanks{%
        Department of Computing and Mathematical Sciences, Caltech, USA (\email{franca.hoffmann@caltech.edu}).
    } \and
    A. M. Stuart\thanks{%
        Department of Computing and Mathematical Sciences, Caltech, USA (\email{astuart@caltech.edu}).
    } \and
    U. Vaes\thanks{%
        MATHERIALS, Inria Paris \& CERMICS, \'Ecole des Ponts, France (\email{urbain.vaes@inria.fr})
    }
}
\begin{document}
\maketitle

\begin{abstract}
    The ensemble Kalman filter is widely used in applications because, for high dimensional filtering problems, it has a robustness that is not shared for example by the particle filter; in particular it does not suffer from weight collapse. However, there is no theory which quantifies its accuracy as an approximation of the true filtering distribution, except in the Gaussian setting. To address this issue we provide the first analysis of the accuracy of the ensemble Kalman filter beyond the Gaussian setting. We prove two types of results: the first type comprise a stability estimate controlling the error made by the ensemble Kalman filter in terms of the difference between the true filtering distribution and a nearby Gaussian; and the second type use this stability result to show that, in a neighbourhood of Gaussian problems, the ensemble Kalman filter makes a small error, in comparison with the true filtering distribution. Our analysis is developed for the mean field ensemble Kalman filter. We rewrite the update equations for this filter, and for the true filtering distribution, in terms of maps on probability measures. We introduce a weighted total variation metric to estimate the distance between the two filters and we prove various stability estimates for the maps defining the evolution of the two filters, in this metric. Using these stability estimates  we prove results of the first and second types, in the weighted total variation metric. We also provide a generalization of these results to the Gaussian projected filter, which can be viewed as a mean field description of the unscented Kalman filter.
\end{abstract}

\begin{keywords}
    Ensemble Kalman filter, Stochastic filtering, Weighted total variation metric, Stability estimates,
    Accuracy estimates, Near-Gaussian setting.
\end{keywords}

\begin{AMS}
60G35, 
62F15, 
65C35, 
70F45, 
93E11. 
\end{AMS}

\section{Introduction}
\label{sec:I}

\subsection{Context}
\label{ssec:C}

This paper is concerned with the study of partially and noisily observed
dynamical systems.
Filtering refers to the sequential updating of the probability distribution
of the state of the (possibly stochastic) dynamical system, given partial noisy
observations \cite{asch2016data,MR3363508,reich2015probabilistic}.
The Kalman filter, introduced in 1960,
provides an explicit solution to this problem in the setting of
linear dynamical systems, linear observations and additive Gaussian noise \cite{kalman1960new}; the desired probability distribution is Gaussian and the
Kalman filter provides explicit update formulae for the mean and covariance.
The extended Kalman filter is a linearization-based methodology which was developed
in the 1960s and 1970s to apply to situations beyond the linear-Gaussian setting \cite{jazwinski2007stochastic}. It is, however, not practical in high dimensions, because of the need to compute, and sequentially update,
large covariance matrices~\cite{ghil}. To address this issue, the ensemble Kalman filter was introduced in 1994 \cite{evensen1994sequential}, using ensemble-based low-rank approximations of the covariances, and is a method well-adapted to high dimensional problems. The unscented Kalman filter, introduced in 1997, provides an alternative approach to nonlinear and non-Gaussian problems \cite{julier1997new}, using quadrature to approximate covariance matrices, and is well-adapted to problems of moderate dimension.
The particle filter \cite{doucet2001sequential} is a provably convergent methodology for approximating the filtering distribution \cite{cricsan1999interacting,MR3375889}.
However it does not scale well to high dimensional problems \cite{MR2459233,snyder2008obstacles}; this motivates the increasing adoption of
ensemble Kalman methods.

Over the last two decades ensemble Kalman filters have found widespread
use in the geophysical sciences,
are starting to be used in other application domains, and have been developed as a general purpose
tool for solving inverse problems; for reviews of such developments see
\cite{2022arXiv220911371C,evensen2009data,evensen2022data}.
Despite widespread and growing adoption,
theory quantifying the accuracy of the ensemble Kalman filter, in relation to
the true filtering distribution, is limited.
Two exceptions are the important
contributions \cite{le2009large,mandel2011convergence} which concern accuracy in the large particle limit
in the setting where the underlying filtering problem is Gaussian. However, there is no proof that the
ensemble Kalman methodology can accurately approximate the desired filtering
distribution beyond the Gaussian setting. Indeed,
in general the methodology does not correctly reproduce the filtering
distribution; for examples and analysis in the setting of
filtering and Bayesian inverse problems see  \cite{apte2008bayesian,le2009large} and
\cite{MR3400030} respectively.
The aim of our work is to address this issue by proving accuracy of the
ensemble Kalman filter beyond the Gaussian setting; specifically our analysis applies
when the true filtering distribution is close to Gaussian, after
appropriate lifting to the joint space of state and data.
We perform the analysis for the mean field ensemble Kalman filter, focusing on
quantifying the effect of the Gaussian approximation underlying the
ensemble Kalman filter. We also study the Gaussian projected filter;
this filter can be seen as a mean field version of the
unscented Kalman filter. Both the ensemble and unscented Kalman mean
field models are defined in \cite{2022arXiv220911371C}.

The three primary contributions of our paper are: (i) to establish finite time stability estimates,
in an appropriate weighted TV metric which enables control of first and second order moments,
for various nonlinear maps required to define the nonlinear
Markov processes determining filter evolution; (ii) to use these results to establish
stability estimates controlling the error made by the mean field ensemble Kalman
filter in terms of the difference between the true filtering distribution and a nearby Gaussian;
and (iii) to deploy these estimates to prove closeness of the mean field ensemble Kalman filters
and the true filtering distribution, in the near-Gaussian setting. These results are also
established for the mean-field unscented Kalman filter. As well as making the three primary contributions,
the work suggests many questions for further analysis and the numerical analysis framework
we deploy (``consistency plus stability implies convergence'')
is a natural one in which to pursue these questions. In particular we assume bounded vector fields
and discuss only finite time error; overcoming these assumptions requires new ideas and is outside
the scope of this first paper.

Although our study of the accuracy of ensemble Kalman filters, beyond the Gaussian setting, is new,
there exists a growing body of literature analyzing ensemble Kalman methods from different perspectives.
In the context of long-time behaviour see the papers \cite{kelly2014well,tong2016nonlinear,MR3784489,de2018long} which focus primarily on the accuracy of
the filter in approximating the true trajectory over long time intervals; in contrast the papers \cite{gottwald2013mechanism,kelly2015concrete} demonstrate a mechanism for
filter divergence, an obstacle to obtaining accuracy.
Localization is widely used in practice and important to consider in overall understanding
of ensemble Kalman filter performance; see \cite{tong2018performance,tong2023localized}.
For analysis of filters in high dimensions see \cite{snyder2015performance,majda2018performance}.
For continuous-time and mean-field limits see \cite{lange2020mean,lange2021continuous,ertel2022analysis}.
In the context of inverse problems see~\cite{iglesias2013ensemble,schillings2017analysis,schillings2018convergence,huang2022iterated,huang2022efficient}.

\subsection{Overview of Paper}
\label{ssec:O}

In \cref{ssec:N} which follows we define useful notation employed throughout the
paper. Then, in \cref{ssec:FD} we define the filtering problem as employed throughout the
paper, building on this notation.
\Cref{sec:EKF} introduces
the mean field ensemble Kalman filter and proves our main approximation theorem, in the near Gaussian
setting; the theorem is based on a stability estimate which transfers distance between the true filter
and its Gaussian projection into the distance between the true filter and the ensemble Kalman
filter. In~\cref{sec:GPF} we define, and then state and prove related theory for, the Gaussian projected filter.
Although we provide theorems of a type not seen before for nonlinear Kalman filters,
and new methods of analysis to derive them,
our work leads to many substantial open problems; we conclude in \cref{sec:discussion} by highlighting those we have identified as being
of particular value to advancing the field. The reader may wish to study the
concluding \cref{sec:discussion}, in conjunction with our set-up in \cref{sec:EKF},
to understand the specific problem formulation for our main theorems and to appreciate
the substantial challenges to be met in order to build on, and go beyond, our theorems.
The key auxiliary results underpinning the proof of our main theorems are given in \cref{appendix:A,appendix:C};
these rely on technical results presented in \cref{app:A2}.

\subsection{Notation}
\label{ssec:N}

The Euclidean vector norm is denoted by~$\vecnorm{\placeholder}$,
and the corresponding operator norm on matrices is denoted by~$\matnorm{\placeholder}$.
For a symmetric positive definite matrix $\mat S \in \real^{n \times n}$,
the notation~$\vecnorm{\placeholder}_{\mat S}$ refers to the weighted norm $\vecnorm{ \mat S^{-1/2} \placeholder }$.
For a function $m \colon \real^n \to \real$ and $r \geq 0$,
we let $B_{L^{\infty}}(m,r)$ denote the $L^\infty$ ball of radius $r$,
centred at $m$;
and let $|\placeholder|_{C^{0,1}}$ denote the $C^{0,1}$ semi-norm, namely the Lipschitz constant.

We use symbol $\ind$ to denote independence of two random variables.
For $\vect m \in \real^n$ and $\mat \Sigma \in \real^{n \times n}$,
the notation $\normal(\vect m, \mat \Sigma)$ denotes the Gaussian distribution with mean~$\vect m$ and covariance~$\mat C$.
The notation~$\mathcal P(\real^n)$ denotes the set of probability measures over~$\real^n$,
and $\mathcal P^p(\real^n)$ is the set of probability measures over~$\real^n$ with finite moments up to order~$p$.
The notation $\mathcal P_{\rm c}(\real^n)$ is the set of probability measures over~$\real^n$ with continuous density with respect to the Lebesgue measure,
and notation $\mathcal G(\real^n)$ denotes the set of Gaussian probability measures over~$\real^n$.
We also introduce the Gaussian projection operator
$\op G\colon \mathcal P^2(\real^n) \to \mathcal G(\real^n)$ given by
\[
    \op G \mu = \normal\bigl(\mathcal M(\mu), \mathcal C(\mu)\bigr).
\]
We observe~\cite{MR2247587} that
\begin{equation*}
    \op G \mu = \argmin_{\nu \in \mathcal G} \kl{\mu}{\nu},
\end{equation*}
where $\kl{\mu}{\nu}$ is the Kullback--Leibler (KL) divergence of $\mu$ from $\nu$,
defined in~\eqref{eq:kl}.
Note that~$\op G$ defines a nonlinear mapping. We refer to $\op G$ as a projection because of
its characterization as finding closest point to $\mu$ with respect to the $\kl{\mu}{\placeholder}$ divergence.
Throughout this paper all probability measures have continuous Lebesgue density, because of our assumptions concerning the noise structure in the dynamics model and the data acquisition model. Thus
we abuse notation by using the same symbols for probability measures and their densities.
For a probability measure~$\mu \in \mathcal P(\real^n)$,
the notation $\mu(\vect u)$ for $\vect u \in \real^n$ refers to the Lebesgue density of~$\mu$ evaluated at~$\vect u$,
whereas $\mu[f]$ for a function~$f\colon \real^n \to \real$ is a  short-hand notation for $\int_{\real^n} f \d \mu$.

The notations~$\mathcal M(\mu)$ and~$\mathcal C(\mu)$ denote respectively the mean and covariance under $\mu$:
\[
    \mathcal M(\mu) = \mu[u], \qquad
    \mathcal C(\mu) = \mu \Bigl[\bigl(u - \mathcal M(\mu)\bigr) \otimes \bigl(u - \mathcal M(\mu)\bigr)\Bigr].
\]
The notation $\mathcal P_R(\real^n)$ for $R \geq 1$ refers to the subset of $\mathcal P(\real^n)$ of probability measures
whose first and second moments satisfy the bound
\begin{equation}
    \label{eq:space_local_lipschitz}
    \lvert \mathcal M(\mu) \rvert \leq R,
    \qquad
    \frac{1}{R^2} \mat I_n \preccurlyeq \mathcal C(\mu) \preccurlyeq R^2 \mat I_n.
\end{equation}
Here $\mat I_n$ denotes the identity matrix in $\real^{n \times n}$,
and~$\preccurlyeq$ is the partial ordering defined by the convex cone of positive semi-definite matrices.
Similarly, $\mathcal G_R(\real^n)$ is the subset of $\mathcal G(\real^n)$ of probability measures satisfying~\eqref{eq:space_local_lipschitz}.

For a probability measure $\pi \in \mathcal P(\real^d \times \real^K)$
associated with random variable $(u,y) \in \real^d \times \real^K$
we use the notation $\mathcal M^u(\pi) \in \real^d$, $\mathcal M^y(\pi) \in \real^K$
for the means of the marginal distributions,
and the notation $\mathcal C^{uu}(\pi) \in \real^{d \times d}$, $\mathcal C^{uy}(\pi) \in \real^{d \times K}$ and~$\mathcal C^{yy}(\pi) \in \real^{K\times K}$
for the blocks of the covariance matrix~$\mathcal C(\pi)$.
That is to say,
\[
    \mathcal M(\pi) =
    \begin{pmatrix}
        \mathcal M^u(\pi) \\
        \mathcal M^y(\pi)
    \end{pmatrix},
    \qquad
    \mathcal C(\pi) =
    \begin{pmatrix}
        \mathcal C^{uu}(\pi) & \mathcal C^{uy}(\pi) \\
        \mathcal C^{uy}(\pi)^\t & \mathcal C^{yy}(\pi)
    \end{pmatrix}.
\]

Throughout this work,
we employ the following weighted total variation distance.
\begin{definition}
    \label{definition:weighted_total_variation}
    Let $g\colon \real^n \to [1, \infty)$ be given by $g(v) = 1 + \vecnorm{v}^2$.
    We define the weighted total variation metric~$d_g \colon \mathcal P^2(\real^n) \times \mathcal P^2(\real^n) \to \real$ by
    \[
        d_g(\mu_1, \mu_2) = \sup_{\abs{f} \leq g} \bigl\lvert \mu_1[f] - \mu_2[f] \bigr\rvert,
    \]
    where the supremum is over all functions $f\colon \real^n \to \real$ which are bounded from above by~$g$
    pointwise and in absolute value.
\end{definition}
Metrics of this type have been employed previously in the literature.
See for example the early reference~\cite{MR1174380},
where a weighted total variation metric is used for stuyding ergodicity for Markov chains,
and the reference~\cite{MR4074698},
where a similar metric is employed in the context of perturbation of Markov chains.
See also~\cite{MR2857021},
where a direct proof of Harris' ergodic theorem relying on appropriate weighted total variation norms is given,
as well as~\cite[Theorem 6.15]{MR2459454},
which states that Wasserstein distances can be controlled by weighted total variation metrics.
There are other dual metrics on probability measures commonly employed in the literature,
such as the so-called maximum mean discrepancy,
which is based on a dual formulation in a reproducing kernel Hilbert space; see~e.g.~\cite{tolstikhin2016minimax}.
\begin{remark}
    \label{remark:total_variation}
    If $\mu_1, \mu_2$ have Lebesgue densities $\rho_1$, $\rho_2$,
    then the weighted metric in~\cref{definition:weighted_total_variation} satisfies
    \[
        d_g(\mu_1, \mu_2) = \int g(v) \abs*{\rho_1(v) - \rho_2(v)} \, \d v.
    \]
Unlike the usual total variation distance, this weighted total variation metric
enables control of the differences $\lvert \mathcal M(\mu_1) - \mathcal M(\mu_2) \rvert$ and $\lVert \mathcal C(\mu_1) - \mathcal C(\mu_2) \rVert$; several lemmas used to prove our main results rely on this control.
    More precisely, it is possible to show that
    for all $\mu_1, \mu_2 \in \mathcal P(\real^n)$ with finite second moments,
    it holds that
    \begin{align*}
        \bigl\lvert \mathcal M(\mu_1) - \mathcal M(\mu_2) \bigr\rvert &\leq \frac{1}{2} d_g(\mu_1, \mu_2), \\
        \bigl\lVert \mathcal C(\mu_1) - \mathcal C(\mu_2) \bigr\rVert &\leq \left(1 + \frac{1}{2} \vecnorm{\mathcal M(\mu_1) + \mathcal M(\mu_2)}\right) \, d_g(\mu_1, \mu_2).
    \end{align*}
    This is the content of~\cref{lemma:moment_bound},
    proved in the appendix.
\end{remark}

\subsection{Filtering Distribution}
\label{ssec:FD}

We consider a general setting in~\cref{sssec:GC},
and provide details for the Gaussian setting in particular in~\cref{sssec:GaC}.

\subsubsection{General Case}
\label{sssec:GC}

We consider the following stochastic dynamics and data model
\begin{subequations}
    \label{eq:filtering}
    \begin{alignat}{2}
        \label{eq:stochasic_dynamics}
        u_{j+1} &= \vect \Psi(u_j) + \xi_j, \qquad &&\xi_j \sim \normal(0, \mat \Sigma), \\
        \label{eq:data_model}
        y_{j+1} &= \vect h(u_{j+1}) + \eta_{j+1}, \qquad &&\eta_{j+1} \sim \normal(0, \mat \Gamma).
    \end{alignat}
\end{subequations}
Here $\{u_j\}_{j \in \llbracket 0, J \rrbracket}$ is the unknown state,
evolving~in $\real^d$,
and $\{y_j\}_{j \in \llbracket 0, J \rrbracket}$ are the observations, evolving in~$\real^K$.
We assume throughout the paper that the covariance matrices~$\mat \Sigma, \mat \Gamma$ are positive definite, that the function $h \colon \real^d \to \real^K$ is continuous, and
that the initial state is distributed according to a Gaussian distribution $u_0 \sim \normal(\vect m_0, \mat C_0)$
and that the following independence assumption is satisfied:
\begin{equation}
    \label{eq:independence}
    u_0 \ind \xi_0 \ind \dotsb \ind \xi_J \ind \eta_1 \ind \dotsb \ind \eta_{J+1}.
\end{equation}
The objective of probabilistic filtering is to sequentially estimate the
conditional distribution of the unknown state, given the data, as new data arrives.
The true filtering distribution $\mu_j$ is the conditional distribution of the state $u_j$ given
a realization $Y_j^\dagger = \{y_1^\dagger, \dotsc, y_j^\dagger \}$ of the data process up to step $j$.
Data $Y_j$ may be thought of as arising from a realization of \eqref{eq:filtering}; but the case of
model mis-specification, where $Y_j$ does not necessarily arise from \eqref{eq:filtering}, is also
of interest.

It is well-known~\cite{MR3363508,reich2015probabilistic} that the true filtering distribution evolves according to
\begin{equation}
    \label{eq:true_filtering}
    \mu_{j+1} = \op L_j \op P \mu_j.
\end{equation}
where $\op P$ and $\op L_j$ are maps on probability measures,
referred to respectively as the prediction and analysis steps in
the data assimilation community \cite{asch2016data}.
The operator $\op P\colon \mathcal P(\real^n) \to \mathcal P(\real^n)$ is linear and defined by the Markov kernel
associated with the stochastic dynamics~\eqref{eq:stochasic_dynamics}.
Its action on a probability measure with Lebesgue density~$\mu$ reads
\begin{equation}
    \label{eq:Markov_kernel}
    \op P \mu(u) = \frac{1}{\sqrt{(2\pi)^d \det \mat \Sigma}}\int_{\real^d} \exp \left( - \frac{1}{2} \vecnorm*{u - \vect \Psi(v)}_{\mat \Sigma}^2 \right) \mu(v)\, \d v.
\end{equation}
The operator $\op L_j\colon \mathcal P(\real^n) \to \mathcal P(\real^n)$ is a nonlinear map which formalizes the incorporation of the new datum~$y^\dagger_{j+1}$ using Bayes' theorem.
Its action on a probability measure in $\mathcal P(\real^d)$ with Lebesgue density~$\mu$ reads
\begin{equation}
    \label{eq:decomposition_of_the_analysis}
    \op L_j \mu(u) =
    \frac
    {\displaystyle \exp \left( - \frac{1}{2} \bigl\lvert y_{j+1}^{\dagger} - \vect h(u) \bigr\rvert_{\mat \Gamma}^2 \right)  \mu(u)}
    {\displaystyle \int_{\real^d} \exp \left( - \frac{1}{2} \bigl\lvert y_{j+1}^{\dagger} - \vect h(U) \bigr\rvert_{\mat \Gamma}^2 \right)  \mu(U) \, \d U}
\end{equation}
The operator $\op L_j$ effects a reweighting of the measure to which it applies,
with more weight assigned to the state values that are consistent with the observation.
It is convenient in this work to decompose the analysis map $\op L_j$
into the composition $\op B_j \op Q$.
The action of the operator $\op Q\colon \mathcal P(\real^d) \to \mathcal P(\real^d \times \real^K)$
on a probability measure in $\mathcal P(\real^d)$ with Lebesgue density~$\mu$ is given by
\begin{equation}
    \label{eq:definition_Q}
    \op Q \mu (u, y) = \frac{1}{\sqrt{(2\pi)^K \det \mat \Gamma}} \exp \left( - \frac{1}{2} \bigl\lvert y - \vect h(u) \bigr\rvert_{\mat \Gamma}^2 \right)  \mu(u).
\end{equation}
On the other hand,
the action of $\op B_j \colon \mathcal P_c(\real^d \times \real^K) \to \mathcal P(\real^d)$
on a probability measure with continuous Lebesgue density~$\mu$ is given by
\begin{equation}
    \label{eq:definition_B}
    \op B_j \mu(u) = \frac{\mu(u, y_{j+1}^{\dagger})}{\displaystyle \int_{\real^d} \mu(U, y_{j+1}^{\dagger}) \, \d U}.
\end{equation}
The operator $\op Q$ maps a probability measure with density $\mu$ into the
density associated with the joint random variable $\bigl(U,\vect h(U) + \eta\bigr)$,
where $U \sim \mu$ is independent of~$\eta \sim \normal(0, \mat \Gamma)$.
The operator~$\op B_j$ performs conditioning on the data~$y^\dagger_{j+1}$.
Map $\op Q$ is linear whilst $\op B_j$ is nonlinear.
We may thus write \eqref{eq:true_filtering} in the form
\begin{equation}
    \label{eq:true_filtering2}
    \mu_{j+1} = \op B_j \op Q \op P \mu_j.
\end{equation}
We note that for any $\mu\in \mathcal P(\real^d)$, the measure $\op Q\op P\mu\in \mathcal P_c(\real^d\times\real^K)$ has continuous Lebesgue density,
and so the iteration~\eqref{eq:true_filtering2} is well defined.

\subsubsection{Gaussian Case}
\label{sssec:GaC}

Now consider the setting in which $\Psi(\cdott)=M\cdott$, and $h(\cdott)=H\cdott$ for matrices
$\mat M \in \real^{d \times d}$ and~$\mat H \in \real^{K \times d}.$
In this case the filtering
distribution is Gaussian: $\mu_j=\normal(m_j,C_j).$ The Kalman filter~\cite{kalman1960new}
gives explicit evolution equations for the pair $(m_j,C_j) \in \real^d \times \real^{d \times d}.$
To write these down it is helpful to note that $\widehat{\mu}_{j+1}:={\op P}\mu_j$ is also
Gaussian: $\widehat{\mu}_{j+1}=\normal(\pmean_j,\pCov_j).$

Then $(\mu_{j+1},\hmu_{j+1})$ are determined from $\mu_j$ by the update formulae
\begin{subequations}
    \label{eq:update}
    \begin{align}
        \pmean_{j+1} &= \mat M \mean_{j},\\
        \pCov_{j+1} &= \mat M \Cov_{j}\mat M^\t + \mat \Sigma,\\
        \mean_{j+1} &= \pmean_{j+1} + \pCov_{j+1} \mat H^\t\left( \mat H\pCov_{j+1} \mat H^\t+ \mat \Gamma\right)^{-1} \left(\yd_{j+1} -  \mat H\pmean_{j+1}\right),\\
        \Cov_{j+1} &= \pCov_{j+1} - \pCov_{j+1} \mat H^\t\left( \mat H\pCov_{j+1} \mat H^\t+ \mat \Gamma\right)^{-1} \mat H \pCov_{j+1},
    \end{align}
\end{subequations}
where $\{\yd_{j}\}$ is the observation. Recall that this observation may be thought of as arising from a realization of \eqref{eq:filtering}; but, in the case of model mis-specification, may be derived from a different source.

\section{The Ensemble Kalman Filter}
\label{sec:EKF}

Following from expository discussion of the Gaussian case in ~\cref{ssec:MaF},
in~\cref{ssec:MF} we define the specific version of the mean-field ensemble Kalman filter
that we analyze here; other versions may be found in \cite{2022arXiv220911371C} and will be amenable
to similar analyses. In~\cref{ssec:SEKF} we prove our main stability theorem, showing that the
error between the true filter and its Gaussian projection may be used to control the error
between the true filter and the ensemble Kalman filter.
In~\cref{ssec:AEKF} we prove a corollary to this theorem, establishing that the mean field
ensemble Kalman filter accurately approximates the true filter for a specific class of
non-Gaussian problems.

\subsection{The Algorithm: Gaussian Case}
\label{ssec:MaF}

To motivate the mean-field ensemble Kalman filter we first consider the
Gaussian case and introduce the stochastic dynamical system
\begin{subequations}
    \label{eq:sd2nn_sub}
    \begin{align}
        \label{eq:sd2nn_sub_a}
        \hv_{j+1} &= Mv_j + \xi_j, \qquad & \xi_j \sim \normal(0, \mat \Sigma),\\
        \label{eq:sd2nn_sub_b}
        \hy_{j+1} &= H\hv_{j+1} + \eta_{j+1}, \qquad & \eta_{j+1} \sim \normal(0, \mat \Gamma)\\
        \label{eq:sd2nn_sub_c}
        v_{j+1} &= \hv_{j+1}+\pCov_{j+1}H^\top(H\pCov_{j+1}H^\top+ \mat \Gamma)^{-1}(\yd_{j+1}-\hy_{j+1}).
    \end{align}
\end{subequations}
Here $\pCov_{j+1}$ denotes the covariance of $\hv_{j+1}$.
A simple calculation reveals that, if~$v_0 \sim \normal(m_{0}, \mat C_{0})$,
then in fact $\hv_{j+1} \sim \normal(\pmean_{j+1},\pCov_{j+1})$ and $v_{j+1} \sim \normal(m_{j+1}, \mat C_{j+1})$, where the means and covariances are given by the Kalman filter \eqref{eq:update}.
Note that \eqref{eq:sd2nn_sub} constitutes a mean-field stochastic dynamical system because equation \eqref{eq:sd2nn_sub_c} requires knowledge of $\pCov_{j+1}$, which depends on the law~$\mu_j$ of~$v_j$.
The law of this mean-field stochastic dynamical system is thus equal to the law of the Kalman filter.
The analysis in \cite{le2009large,mandel2011convergence} is concerned with studying particle approximations of this Gaussian mean-field stochastic dynamical system, and convergence to the mean-field limit in the limit of infinite particles.
In contrast, our work concerns the study of mean-field stochastic dynamical systems,
but goes beyond the Gaussian setting. To this end, the next subsection defines the
mean-field ensemble Kalman filter in the general, non-Gaussian, setting.

\subsection{The Algorithm: General Case}
\label{ssec:MF}

The ensemble Kalman filter may be derived as a particle approximation of
various mean field limits \cite{2022arXiv220911371C}.
The specific mean field ensemble Kalman filter that
we study in this paper reads
\begin{subequations}
\label{eq:ensemble_kalman_mean_field}
\begin{alignat}{2}
    \widehat u_{j+1} &= \vect \Psi (u_j) + \xi_j, \qquad & \xi_j \sim \normal(0, \mat \Sigma), \\
    \widehat y_{j+1} &= \vect h(\widehat u_{j+1}) + \eta_{j+1},  \qquad & \eta_{j+1} \sim \normal(0, \mat \Gamma) \\
    \label{eq:mean_field_ekf_analysis}
    u_{j+1} &= \widehat u_{j+1} + \mathcal C^{uy}\left(\widehat \pi_{j+1}^K\right) \mathcal C^{yy}\left(\widehat \pi_{j+1}^K\right)^{-1} \bigl(y^\dagger_{j+1} - \widehat y_{j+1} \bigr),
\end{alignat}
\end{subequations}
where $\widehat \pi^K_{j+1} = {\rm Law}(\widehat u_{j+1}, \widehat y_{j+1})$
and independence of the noise terms~\eqref{eq:independence} is still assumed to hold.
The covariance matrices $\mat \Gamma, \mat \Sigma$ are still assumed to be positive definite,
so $\mathcal C^{yy}(\widehat \pi_{j+1}^K) \succ 0$
and the algorithm is well-defined.
See~\cref{ssec:N} for the definition of the covariance matrices that appear in~\eqref{eq:mean_field_ekf_analysis}.
\begin{remark}
    The mean-field EnKF algorithm may be derived as the best linear unbiased estimator (BLUE) of the predicted state, given the data; see \cite{2022arXiv220911371C}.
\end{remark}

Let us denote by~$\mu^K_j$ the law of $u_j$.
In order to rewrite the evolution of $\mu_j^K$ in terms of maps on probability measures,
let us introduce
\[
    \mathcal P^2_{\succ 0}\left(\real^d \times \real^K\right) := \Bigl\{ \pi \in \mathcal P^2\left(\real^d \times \real^K\right) : \mathcal C^{yy}(\pi) \succ 0 \Bigr\}.
\]
Then,
for a given $y^\dagger_{j+1}$,
we denote by $\op T_j \colon \mathcal P^2_{\succ 0}(\real^d \times \real^K) \to \mathcal P^2(\real^d)$ the map defined by
\begin{equation}\label{eq:Tmap}
\op T_j\pi =  \mathscr T(\placeholder, \placeholder; \pi, y^\dagger_{j+1})_{\sharp} \pi
\end{equation}
Here the subscript $_\sharp$ denotes the pushforward and,
for any given $\pi\in \mathcal P(\real^d \times \real^K)$ and $z\in \real^K$,
the map~$\mathscr T$ is affine in its first two arguments and given by
\begin{align}
    \nonumber
    \mathscr T(\placeholder, \placeholder; \pi, z) \colon
    &\real^{d} \times \real^K \to \real^d; \\
    \label{eq:mean_field_map}
    &(u, y) \mapsto u + \mathcal C^{uy}(\pi) \mathcal C^{yy}(\pi)^{-1} \bigl(z - y \bigr).
\end{align}
As demonstrated in~\cite{2022arXiv220911371C},
with this notation the evolution of the probability measure $\mu_j^K$ may be written in compact form as
\begin{equation}
    \label{eq:compact_mf_enkf}
    \mu_{j+1}^K = \op T_j\op Q \op P \mu_j^K.
\end{equation}
We now discuss the preceding map in relation to \eqref{eq:true_filtering2}.
The specific affine map $\mathscr T$ used in~\eqref{eq:mean_field_ekf_analysis} is determined by the measure $\widehat \pi^K_{j+1}$ (here equal to $\op Q \op P \mu_j^K$) and the data $y^\dagger_{j+1}$.
That the law of~$u_j$ in~\eqref{eq:ensemble_kalman_mean_field} evolves according to~\eqref{eq:compact_mf_enkf}
follows from the following observations:
\begin{itemize}
    \item
        If $u_{j} \sim \mu_j^K$, then $\widehat u_{j+1} \sim \op P \mu_j^K$,
        by definition of~$\op P$.

    \item
        Given the definition~\eqref{eq:definition_Q} of the operator~$\op Q$,
        the random vector~$(\widehat u_{j+1}, \widehat y_{j+1} )$ is distributed according to $\widehat \pi^K_{j+1}=\op Q \op P \mu_j^K$.

    \item
        Equation~\eqref{eq:mean_field_ekf_analysis} then implies that $u_{j+1} \sim \op T_j\op Q \op P \mu_j^K$.
\end{itemize}
As we show in~\cref{lemma:mean_field_map},
the operator~$\op T_j$ coincides with the conditioning operator~$\op B_j$ over the subset~$\mathcal G(\real^{d} \times \real^K) \subset \mathcal P(\real^d \times \real^K)$ of Gaussian probability measures.
Therefore, in the particular case where~$\mu_0$ is Gaussian,
which is a standing assumption in this paper,
and the operators~$\vect \Psi$ and~$\vect h$ are linear,
the mean field ensemble Kalman filter~\eqref{eq:compact_mf_enkf} reproduces the exact filtering distribution~\eqref{eq:true_filtering2},
which is then the Kalman filter itself; indeed this is what we show in~\cref{ssec:MaF}.
In the next section we provide error bounds between \eqref{eq:true_filtering2} and \eqref{eq:compact_mf_enkf} when
$\vect \Psi$ and~$\vect h$ are not assumed to be linear.

Our theorems  in~\cref{ssec:SEKF,ssec:AEKF} concern relationships between
the true filter \eqref{eq:true_filtering2} and the mean-field ensemble Kalman filter
\eqref{eq:compact_mf_enkf}. We study the setting in which $\vect \Psi$ and~$\vect h$
are not assumed to be linear, so that the true filter is not Gaussian, in~\cref{ssec:SEKF};
and then we study small perturbations away from the Gaussian setting that arise when the vector fields
$\vect \Psi$ and~$\vect h$  are close to constant in~\cref{ssec:AEKF}.
We recall that that $B_{L^{\infty}}(m,r)$ denotes the $L^\infty$ ball of radius $r$, centred at $m$.
The theorems hold under the following set of assumptions.

\begin{assumption}
    \label{assumption:ensemble_kalman}
    The following assumptions hold on the data $\{y_j^\dagger\}$, the vector fields $(\vect \Psi, \vect h)$
    and the covariances $(\mat \Sigma, \mat \Gamma).$

    \begin{assumpenum}[label=(H\arabic*)]
    \item
        \label{assumpenum:assumption1}
        Fix positive integer $J.$
        The data $Y^\dagger =\{y_j^\dagger\}_{j=1}^J$ lies in set $B_y \subset \real^{KJ}$ defined, for some $\kappa_y>0$, by
        \[
            B_y := \left\{Y^\dagger \in \real^{KJ}: \max_{j \in \range{0}{J}} \lvert y^\dagger_j \rvert \le \kappa_y\right\}.
        \]

    \item
        \label{assumpenum:assumption2}
        The function $\vect \Psi$ satisfies $\vect \Psi(\placeholder) \in B_{\Psi}:= B_{L^\infty}(0,\kappa_{\vect \Psi})$ for some $\kappa_{\vect \Psi}>0$.

    \item
        \label{assumpenum:assumption3}
        The function $\vect h$ is continuous and  satisfies $\vect h(\placeholder) \in B_{h}:= B_{L^\infty}(0,\kappa_{\vect h})$ for some $\kappa_{\vect h}>0$.
    \item
        \label{assumpenum:assumption4}
        The covariance matrices $\mat \Sigma$ and $\mat \Gamma$ appearing in~\eqref{eq:ensemble_kalman_mean_field} are positive definite:
        $\mat \Sigma \succcurlyeq \sigma \mat I_d$ and~$\mat \Gamma \succcurlyeq \gamma \mat I_K$ for positive $\sigma$ and $\gamma$.
    \end{assumpenum}
\end{assumption}

\subsection{Stability Theorem: Ensemble Kalman Filter}
\label{ssec:SEKF}

Roughly speaking,
our main result states that if the true filtering distributions $(\mu_j)_{j \in \range{1}{J}}$ are close to Gaussian
after appropriate lifting to the state/data space
then the distribution $\mu^K_j$ given by the mean field ensemble Kalman filter~\eqref{eq:compact_mf_enkf}
is close to the true filtering distribution~$\mu_j$  given by \eqref{eq:true_filtering2}
for all $j \in \llbracket 0, J \rrbracket$.
Recall that $|\placeholder|_{C^{0,1}}$ denotes the $C^{0,1}$ semi-norm, namely the Lipschitz constant.

\begin{theorem}
    [{\bf Stability for the Mean Field Ensemble Kalman Filter}]
    \label{theorem:main_theorem}
    Assume that the probability measures $(\mu_j)_{j \in \range{1}{J}}$ and  $(\mu^K_j)_{j \in \range{1}{J}}$ are obtained respectively from the dynamical systems~\eqref{eq:true_filtering2} and~\eqref{eq:compact_mf_enkf},
    initialized at the same Gaussian probability measure $\mu_0 = \mu_0^K\in~\mathcal G(\real^d)$.
    That~is,
    \[
        \mu_{j+1} = \op B_j \op Q \op P \mu_j, \qquad
        \mu^K_{j+1} = \op T_j \op Q \op P \mu_j^K.
    \]
    If \cref{assumption:ensemble_kalman} holds and $|\vect h|_{C^{0,1}} \leq \ell_{\vect h} < \infty$,
    then there exists~$C = C(J,\kappa_y, \kappa_{\vect \Psi}, \kappa_{\vect h}, \ell_{\vect h}, \mat \Sigma, \mat \Gamma)$
    such that
    \[
        d_g(\mu^K_J, \mu_J) \leq C  \max_{j \in \range{0}{J-1}} d_g( \op Q \op P \mu_j, \op G \op Q \op P \mu_j).
    \]
\end{theorem}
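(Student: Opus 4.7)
The plan is to control $d_g(\mu^K_j, \mu_j)$ by telescoping the one-step error and invoking stability (Lipschitz) estimates for each of the operators $\op P$, $\op Q$, $\op B_j$ and $\op T_j$ in the weighted total variation metric. The central one-step decomposition is
\[
\mu^K_{j+1} - \mu_{j+1} = \bigl( \op T_j \op Q \op P \mu^K_j - \op T_j \op Q \op P \mu_j \bigr) + \bigl( \op T_j \op Q \op P \mu_j - \op B_j \op Q \op P \mu_j \bigr),
\]
in which the first summand is a stability contribution and the second a consistency contribution. For the consistency term I would exploit that $\op T_j$ and $\op B_j$ coincide on Gaussians (the content of \cref{lemma:mean_field_map}): writing $\pi_j := \op Q \op P \mu_j$,
\[
\op T_j \pi_j - \op B_j \pi_j = \bigl( \op T_j \pi_j - \op T_j \op G \pi_j \bigr) + \bigl( \op B_j \op G \pi_j - \op B_j \pi_j \bigr),
\]
so both summands are bounded by the Lipschitz constants of $\op T_j$ and $\op B_j$ applied to $d_g(\pi_j, \op G \pi_j) \leq \varepsilon$, by assumption \eqref{eq:kalman_equation_with_epsilon}.

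The bulk of the work is then to establish the required one-step Lipschitz bounds in $d_g$, on an appropriate class $\mathcal P_R$ of probability measures with controlled first and second moments. Specifically, I would show, with constants depending on $R$ and on $\kappa_y, \kappa_{\vect \Psi}, \kappa_{\vect H}, \ell_{\vect H}, \mat \Sigma, \mat \Gamma$, estimates of the form $d_g(\op P \mu, \op P \nu) \leq C_P \, d_g(\mu, \nu)$, and similarly for $\op Q$, $\op T_j$ and $\op B_j$, uniformly in $j \in \range{0}{J-1}$. The bound on $\op P$ follows from the smoothing action of the Gaussian kernel \eqref{eq:Markov_kernel} together with $\|\vect \Psi\|_\infty \leq \kappa_{\vect \Psi}$; the bound on $\op Q$ follows from the explicit product structure \eqref{eq:definition_Q} and $\|\vect H\|_\infty \leq \kappa_{\vect H}$. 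For $\op B_j$, the partition function $\int \mu(U, y^\dagger_{j+1}) \, \d U$ must be bounded from below, which relies on $\kappa_y$, $\kappa_{\vect H}$, the lower bound $\mat \Gamma \succeq \gamma \mat I_K$ and the $\mathcal P_R$ moment bounds; for $\op T_j$ one must in addition control $\mathcal C^{yy}(\pi)^{-1}$, for which the estimate $\mathcal C^{yy}(\op Q \op P \mu) \succeq \gamma \mat I_K$ is automatic once $\op Q$ has been applied, thanks to the injection of the Gaussian observation noise.

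Next, I would prove a-priori moment bounds ensuring that all iterates $\mu_j, \mu^K_j$ and their lifts $\op Q \op P \mu_j, \op Q \op P \mu^K_j$ lie in $\mathcal P_R$ for some radius $R = R(\kappa_{\vect \Psi}, \kappa_{\vect H}, \mat \Sigma, \mat \Gamma, J)$ independent of $\varepsilon$: upper bounds on the first and second moments come from boundedness of $\vect \Psi$ and $\vect H$ and from the Gaussian noises, while the lower bound on the covariance is maintained by the injection of $\mat \Sigma$-noise at each prediction step. Combining this with the decomposition above yields a recursion
\[
    e_{j+1} \leq L \, e_j + M \varepsilon, \qquad e_j := d_g(\mu^K_j, \mu_j),
\]
with $L, M$ depending only on the parameters listed in the theorem. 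A discrete Gronwall argument, together with $e_0 = 0$ (since $\mu_0 = \mu_0^K$), then delivers $e_J \leq C \varepsilon$, as required.

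I expect the main obstacle to be the Lipschitz estimate for $\op T_j$ in $d_g$. Indeed, the affine map $\mathscr T(\placeholder, \placeholder; \pi, y^\dagger_{j+1})$ depends on $\pi$ through both $\mathcal C^{uy}(\pi)$ and the inverse $\mathcal C^{yy}(\pi)^{-1}$, so a perturbation of the argument measure simultaneously changes the measure being pushed forward and the pushforward map itself, and the two effects must be estimated together. The fact that $d_g$ controls differences of means and covariances (the remark following \cref{definition:weighted_total_variation}, via \cref{lemma:moment_bound}) is precisely what makes this tractable, but extracting the correct dependence on $R$ and showing that $R$ can be chosen uniformly along the whole trajectory of both the true and approximate filters — so that the Lipschitz constants feeding into the Gronwall recursion are finite and $j$-independent — is the delicate point, and is where the Gaussianity of $\mu_0$, combined with the regularizing effects of $\op P$ and $\op Q$, plays an essential role.
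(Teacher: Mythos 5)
Your proposal follows essentially the same route as the paper: the identical three-term decomposition hinging on $\op T_j \op G = \op B_j \op G$ (\cref{lemma:mean_field_map}), global Lipschitz bounds for $\op P$ and $\op Q$, a local stability bound for $\op T_j$ on $\mathcal P_R$, moment bounds, and a discrete Gronwall recursion with $e_0 = 0$. One point where you overcomplicate: there is no need to propagate moment bounds inductively along the trajectory of both filters, nor any $J$-dependence in $R$. Since $\op Q \op P$ maps \emph{every} probability measure into $\mathcal P_R(\real^d \times \real^K)$ with $R = R(\kappa_{\vect \Psi}, \kappa_{\vect H}, \mat \Sigma, \mat \Gamma)$ (\cref{lemma:bound_on_QPmu}), the measures appearing as arguments of $\op T_j$ and $\op B_j$ are automatically in $\mathcal P_R$ at every step, whatever $\mu_j$ and $\mu_j^K$ are; the ``delicate point'' you flag at the end is in fact a non-issue in this architecture.

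The genuine gap is in your treatment of the consistency piece $d_g(\op B_j \op G \pi_j, \op B_j \pi_j)$. You assert that $\op B_j$ admits a Lipschitz constant in $d_g$ on $\mathcal P_R$, provable from a lower bound on the partition function $\int \mu(U, y^\dagger_{j+1})\,\d U$. This cannot work as stated: $\op B_j$ evaluates densities on the measure-zero slice $\{y = y^\dagger_{j+1}\}$, and $d_g$ is an integrated (weighted total variation) metric that gives no control over such slices. Concretely, two joint densities agreeing outside a slab of width $\delta$ around $y^\dagger_{j+1}$ and differing by $O(1)$ inside it satisfy $d_g = O(\delta)$ while their conditionals at $y^\dagger_{j+1}$ differ by $O(1)$; so no Lipschitz estimate for $\op B_j$ in $d_g$ holds on any class of general measures, and the paper accordingly never claims one (its \cref{lemma:lipschitz_l2} is restricted to Gaussians and is used only for the projected filter). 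What saves the argument in the paper's \cref{lemma:missing_lemma} is the special structure of the pair $(\op G \pi, \pi)$ with $\pi \in \ran(\op Q \op P)$: for each fixed $u$, both $y \mapsto \pi(u,y)$ and $y \mapsto \op G \pi(u,y)$ are scalar multiples of Gaussian densities in $y$, with covariances sandwiched between $\mat \Gamma$ and $\mat \Gamma + 2\kappa_{\vect H}^2 \mat I_K$ — the former because the $y$-conditional of $\op Q \op P \mu$ given $u$ is $\normal(\vect H(u), \mat \Gamma)$, the latter via a Schur-complement bound. The inverse inequality $\norm{h}_{\infty} \leq L_{\alpha} \norm{h}_{1}$ for differences of scaled Gaussians (\cref{lemma:gaussians_relation_1_and_inf_norms}, proved by a nontrivial compactness argument) then upgrades the integrated quantity $d_g(\op G \pi, \pi)$ to the pointwise-in-$y^\dagger_{j+1}$ control that conditioning requires. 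This inverse estimate is the missing idea in your proposal; the partition-function lower bounds you invoke correspond only to Step 1 of the paper's proof and are necessary but far from sufficient.
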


\begin{remark}
The constant $C$ in the stability estimate
is uniform across realizations of the
data from $B_y.$ Indeed, since $h$ is assumed bounded, the bound $\kappa_y$ on the data will hold with
probability exponentially close to $1$,\footnote{With respect to the noise realization leading
to the data.} for $\kappa_y \gg 1$, when there is no model mis-specification.
Relaxing the assumptions of
bounded $\Psi$ and $h$, for example to the setting of linear plus bounded functions, is technically
challenging and will be left for future work. Likewise relaxing positivity assumptions on the
noise leads to substantial technical obstacles, deferred for future work. And finally, relaxing the
assumption that $J$ is finite will require further structural assumptions on the long-time
behaviour of the filter, and is also deferred for future work.
\end{remark}

\begin{remark}
The stability theorem shows that the error made by the ensemble Kalman filter is controlled by the
difference between the true filter, lifted to the joint space of state and observations,
and its Gaussian projection. To interpret the result it is thus important to have intuition
about what it means for a measure to be close to its Gaussian projection in the $d_g$ metric.
To this end note that, by \cref{remark:total_variation},
it is necessary
that the means and covariances of the measure and its Gaussian projection are close, for the
measures to be close in the $d_g$ metric.  This is automatically satisfied for the difference
between a measure and its Gaussian projection, by construction. Since closeness in $d_g$
requires the expectations of all functions growing no faster than quadratic to be close,
a useful rule of thumb for practitioners is that the quantity $d_g( \op Q \op P \mu_j, \op G \op Q \op P \mu_j)$ is small when matching first and second moments allows control of all quadratically bounded
functions. This will happen, for example, when $\Psi$ and $h$ are small nonlinear perturbations of
affine functions; and hence it will happen when $\Psi$ and $h$ are small nonlinear perturbations of
either linear functions or constant functions. For technical reasons
(see previous remark) this paper excludes the case of linearly growing functions, but we do study small perturbations of constant functions in Corollary \ref{theorem:main_theorem2}. There is also reason to expect the filtering distribution to be close to Gaussian when the data volume is high, or noise is small, and some version of observability applies; then central limit theorems for inverse problems, such as the Bernstein von Mises theorem, may in the future be developed to quantify this assertion \cite{van2000asymptotic}; for work pointing in this direction, but in the infinite dimensional case where Bernstein von Mises type results
are harder to establish, see~\cite{nickl2023posterior}.
\end{remark}

The proof presented hereafter relies on a number of auxiliary results,
which are summarized below and proved in~\cref{appendix:A}.
\begin{enumerate}
    \item
        For any probability measure~$\mu$,
        the first moments of the probability measures~$\op P \mu$ and~$\op Q \op P \mu$ are bounded from above,
        and their second moments are bounded both from above and from below.
        The constants in these bounds depend only on the parameters $\kappa_{\vect \Psi}$, $\kappa_{\vect h}$, $\mat \Sigma$ and~$\mat \Gamma$.
        See~\cref{lemma:bound_on_Pmu,lemma:bound_on_QPmu}.

    \item
        For any Gaussian measure $\mu \in \mathcal G(\real^{d} \times \real^{K})$,
        it holds that~$\op B_j \mu = \op T_j \mu$.
        See~\cref{lemma:mean_field_map} and also~\cite{2022arXiv220911371C}.

    \item
        The map~$\op P$ is globally Lipschitz on $\mathcal P(\real^d)$ for the metric $d_g$,
        with a Lipschitz constant~$L_{\op P}$ depending only on the parameters $\kappa_{\vect \Psi}$ and $\mat\Sigma$.
        See~\cref{lemma:lipschitz_p}.

    \item
        The map~$\op Q$ is globally Lipschitz on $\mathcal P(\real^d)$ for the metric $d_g$,
        with a Lipschitz constant~$L_{\op Q}$ depending only on the parameters $\kappa_{\vect h}$ and $\mat \Gamma$.
        See~\cref{lemma:lipschitz_Q}.

    \item
        The map $\op B_j$ satisfies for any $\pi \in \ran(\op Q \op P)\subset \mathcal P(\real^d\times\real^K)$  the bound
        \[
            \forall j \in \llbracket 0, J \rrbracket,
            \qquad
            d_{g}(\op B_j \op G \pi, \op B_j \pi)
            \leq C_{\op B} d_{g}(\op G \pi, \pi),
        \]
        where $C_{\op B} = C_{\op B}(\kappa_y, \kappa_{\vect \Psi}, \kappa_{\vect h}, \mat \Sigma, \mat \Gamma)$.
        This statement concerns the stability of the $\op B_j$ operator between a measure and its Gaussian approximation.
        See~\cref{lemma:missing_lemma}.

    \item
        The map $\op T_j$ satisfies the following bound:
        for all $R \ge 1$, it holds for all probability measures~$\pi \in \mathcal P_R(\real^d \times \real^K)$ and $p \in \ran(\op Q \op P)\subset \mathcal P(\real^d\times\real^K)$ that
        \[
            \forall j \in \llbracket 0, J \rrbracket,
            \qquad
            d_g(\op T_j \pi, \op T_j p)
            \leq L_{\op T} \, d_g(\pi, p),
        \]
        for a constant $L_{\op T} = L_{\op T}(R, \kappa_y, \kappa_{\vect \Psi}, \kappa_{\vect h}, \mat \Sigma, \mat \Gamma)$.
        This statement may be viewed as a local Lipschitz continuity result in the case where the second argument of $d_g$ is restricted to the range of $\op Q \op P$.
        See~\cref{lemma:lipschitz_mean_field_affine}.
\end{enumerate}

\begin{proof}
    [Proof of \cref{theorem:main_theorem}]
    In what follows we refer to the preceding itemized list to clarify the proof.
    For notational simplicity it is helpful to define the following measure of the difference
    between the true filtering distribution and its Gaussian projection:
    \begin{equation}
        \label{eq:epsilon}
        \varepsilon:=  \max_{j \in \range{0}{J-1}} d_g( \op Q \op P \mu_j, \op G \op Q \op P \mu_j).
    \end{equation}
    Assume throughout the following that $j \in \range{0}{J-1}.$
    The main idea of the proof results from the following use of the triangle inequality
    \begin{subequations}
    \begin{align}
        &d_g(\mu^K_{j+1}, \mu_{j+1})
        = d_g\left(\op T_j \op Q \op P \mu_j^K, \op B_j \op Q \op P \mu_j\right) \\
        &\quad \leq d_g\left(\op T_j \op Q \op P \mu_j^K, \op T_j \op Q \op P \mu_j\right) +  d_g\left(\op T_j \op Q \op P \mu_j, \op T_j \op G \op Q \op P \mu_j\right)
        +  d_g\left(\op B_j \op G \op Q \op P \mu_j, \op B_j \op Q \op P \mu_j\right).\label{eq:main_idea_main_proofB}
    \end{align}
    \end{subequations}
    We have used the fact that the second argument of the second term on the right-hand side indeed coincides with the first argument of the third term because $\op T_j \op G \op Q \op P \mu_j = \op B_j \op G \op Q \op P \mu_j$ by~item 2 (\cref{lemma:mean_field_map}.)
    By~item~1 (\cref{lemma:bound_on_QPmu}),
    there is a constant~$R \ge 1$, depending on the covariance matrices~$\mat \Sigma$, $\mat \Gamma$ and the bounds~$\kappa_{\vect \Psi}$ and~$\kappa_{\vect h}$
    from \cref{assumption:ensemble_kalman}, such that $\ran(\op Q \op P) \subset \mathcal P_R(\real^d \times \real^K)$.
    By~items 3, 4 and 6 (\cref{lemma:lipschitz_mean_field_affine,lemma:lipschitz_Q,lemma:lipschitz_p}), the first term in~\eqref{eq:main_idea_main_proofB} satisfies
    \begin{align}
        \label{eq:main_theorem_1}
        d_g\left(\op T_j \op Q \op P \mu_j^K, \op T_j \op Q \op P \mu_j \right)
        &\leq L_{\op T}(R) L_{\op Q} L_{\op P} d_g\left(\mu_j^K, \mu_j \right),
    \end{align}
    where, for conciseness, we omitted the dependence of the constants on~$\kappa_y, \kappa_{\vect \Psi}, \kappa_{\vect h}, \mat \Sigma, \mat \Gamma$.
    Equation~\eqref{eq:main_theorem_1} establishes that the composition of maps $\op T_j \op Q \op P$ is globally Lipschitz over~$\mathcal P(\real^d)$.
    Since~$\op G \op Q \op P \mu_j\in \mathcal G_R(\real^d \times \real^K)$ by definition of $R$,
    the second term in~\eqref{eq:main_idea_main_proofB} may be bounded using item 6 (\cref{lemma:lipschitz_mean_field_affine}) and the definition in~\eqref{eq:epsilon} of $\varepsilon$:
    \begin{equation*}
        d_g\left(\op T_j \op Q \op P \mu_j, \op T_j \op G \op Q \op P \mu_j\right)
        \leq L_{\op T}(R) d_g\left(\op Q \op P \mu_j, \op G \op Q \op P \mu_j\right)
        \le L_{\op T}(R) \varepsilon.
    \end{equation*}
    Finally, the third term in~\eqref{eq:main_idea_main_proofB} can be bounded using item 5 (\cref{lemma:missing_lemma}) and the definition in~\eqref{eq:epsilon} of $\varepsilon$:
    \[
        d_g\left(\op B_j \op G \op Q \op P \mu_j, \op B_j \op Q \op P \mu_j\right)
        \leq C_{\op B} d_g\left(\op G \op Q \op P \mu_j, \op Q \op P \mu_j\right) \le  C_{\op B} \varepsilon.
    \]
    Therefore, letting~$\ell = L_{\op T}(R) L_{\op Q} L_{\op P}$, we have shown that
    \begin{align*}
        d_g(\mu^K_{j+1}, \mu_{j+1})
        &\leq \ell d_g(\mu^K_{j}, \mu_{j}) + \bigl(L_{\op T}(R) + C_{\op B}\bigr) \varepsilon
    \end{align*}
    and the conclusion follows from the discrete Gronwall lemma, since $\mu_0 = \mu_0^K.$
\end{proof}

\subsection{Approximation Theorem: Ensemble Kalman Filter}
\label{ssec:AEKF}

\cref{theorem:main_theorem} shows that the ensemble Kalman filter error can be made arbitarily small
if the true filter is arbitarily close to its Gaussian projection, in state-observation space.
This 'closeness to Gaussian' assumption can be satisfied in our setting of bounded vector fields
by considering small perturbations of constant vector fields and is the content of the following corollary.
The result provides a first step in the analysis of the accuracy of the ensemble Kalman filter,
as an approximation of the true filter;
desirable generalizations of what we prove here are
discussed in the conclusions, in~\cref{sec:discussion}.

\begin{corollary}
    [{\bf Accuracy for the Mean Field Ensemble Kalman Filter}]
    \label{theorem:main_theorem2}
    Let $(\mat \Sigma, \mat \Gamma)$ satisfy~\cref{assumpenum:assumption4}.
    Suppose that $\Psi_0\colon \real^d \to \real^d$ and $\vect h_0 \colon \real^d \to \real^K$ are functions taking constant values and denote by $B_{\Psi_0,h_0}(r)$ the set of all functions $(\Psi,h)$ satisfying
    $\Psi \in B_{L^{\infty}}(\Psi_0,r)$, $h \in B_{L^{\infty}}(h_0,r)$ and~\cref{assumpenum:assumption2,assumpenum:assumption3}.
    Assume also that $|h|_{C^{0,1}} \leq \ell_{\vect h} < \infty$ and
    denote by $(\mu_j)_{j \in \range{1}{J}}$ and  $(\mu^K_j)_{j \in \range{1}{J}}$ the probability measures obtained respectively from the dynamical systems~\eqref{eq:true_filtering2} and~\eqref{eq:compact_mf_enkf},
    initialized at the same Gaussian probability measure $\mu_0 = \mu_0^K\in~\mathcal G(\real^d)$.
    That~is,
    \[
        \mu_{j+1} = \op B_j \op Q \op P \mu_j, \qquad
        \mu^K_{j+1} = \op T_j \op Q \op P \mu_j^K.
    \]
    Then for any $\epsilon>0$ there exists $\delta>0$ such that
    \[
        \sup_{Y^\dagger \in B_{y}} \sup_{(\Psi,h) \in B_{\Psi_0,h_0}(\delta)} d_g(\mu^K_J, \mu_J) \leq \epsilon.
    \]
\end{corollary}

\begin{proof}
The result follows from~\cref{theorem:main_theorem} and~\cref{prop:1} in~\cref{appendix:C}.
\end{proof}

\section{Generalization: Gaussian Projected Filter}
\label{sec:GPF}

We generalize the main result to the Gaussian projected filter defined in~\cite{2022arXiv220911371C}.
This algorithm may be viewed as a mean field version of the unscented Kalman filter \cite{julier1997new}.
Using a similar approach to that adopted in the previous section,
we prove a similar stability theorem and accuracy corollary.
The Gaussian projected filter is defined by the iteration
\begin{equation}
    \label{eq:gaussian_projection_filter}
    \mu^G_{j+1} = \op B_j \op G \op Q \op P \mu^G_j.
\end{equation}
This iteration is obtained from \eqref{eq:true_filtering2} by inserting a projection
onto Gaussians  before the conditioning step $\op B_j.$ Because conditioning of Gaussians
on linear observations preserves Gaussianity, the preceding map generates sequence
of measures $\{\mu^G_j\}$ in $\mathcal G(\real^d).$

\begin{remark}
    As shown in~\cref{lem:newform},
    the evolution~\eqref{eq:gaussian_projection_filter} may also be rewritten
    in the form
    \begin{equation}
        \label{eq:gaussian_projection_filter2}
        \mu^G_{j+1} = \op G \op T_j \op Q \op P \mu^G_j.
    \end{equation}
    This shows that the Gaussian projected filter may also be obtained from the mean field ensemble Kalman filter~\eqref{eq:compact_mf_enkf} by adding a Gaussian projection step after
    the conditioning.
\end{remark}

Like~\eqref{eq:compact_mf_enkf}
the filter~\eqref{eq:gaussian_projection_filter} reproduces the true filtering distributions when~$\vect \Psi$ and $\vect h$ are linear,
assuming that~$\mu_0$ is Gaussian.
The following theorem quantifies closeness
of the Gaussian projected filter to the true filtering distribution when the
linearity assumption on $\vect \Psi$ and $\vect h$ is relaxed.

\begin{theorem}
    [{\bf Stability for the Gaussian Projected Filter}]
    \label{theorem:main_theorem_gaussian_projection}
    Assume that the probability measures $(\mu_j)_{j \in \range{1}{J}}$ and  $(\mu^K_j)_{j \in \range{1}{J}}$ are obtained respectively from the dynamical systems~\eqref{eq:true_filtering2} and~\eqref{eq:gaussian_projection_filter},
    initialized at the same Gaussian probability measure $\mu_0 = \mu_0^G$.
    That is,
    \[
        \mu_{j+1} = \op B_j \op Q \op P \mu_j, \qquad
        \mu^G_{j+1} = \op B_j \op G \op Q \op P \mu_j^G.
    \]
    If \cref{assumption:ensemble_kalman} holds, then there exists~$C = C(J,\kappa_y, \kappa_{\vect \Psi}, \kappa_{\vect h}, \mat \Sigma, \mat \Gamma)$
    such that
    \[
        d_g(\mu^G_J, \mu_J) \leq C  \max_{j \in \range{0}{J-1}} d_g( \op Q \op P \mu_j, \op G \op Q \op P \mu_j).
    \]
\end{theorem}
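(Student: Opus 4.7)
The plan is to mirror the proof of \cref{theorem:main_theorem}, deriving a one-step recursive estimate of the form $d_g(\mu^G_{j+1}, \mu_{j+1}) \le \ell \, d_g(\mu_j^G, \mu_j) + C\varepsilon$ via the triangle inequality, and then closing it with the discrete Gronwall lemma and the initial condition $\mu^G_0 = \mu_0$.

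First, because $\op G \op Q \op P \mu_j^G$ is Gaussian, item 2 (\cref{lemma:mean_field_map}) identifies $\op B_j \op G \op Q \op P \mu_j^G$ with $\op T_j \op G \op Q \op P \mu_j^G$, and likewise $\op B_j \op G \op Q \op P \mu_j$ with $\op T_j \op G \op Q \op P \mu_j$. This reduction to $\op T_j$ lets me use a triangle inequality in the same spirit as in \cref{theorem:main_theorem}:
\begin{align*}
    d_g(\mu^G_{j+1}, \mu_{j+1}) &\le d_g(\op T_j \op G \op Q \op P \mu_j^G, \op T_j \op Q \op P \mu_j) \\
    &\quad + d_g(\op T_j \op Q \op P \mu_j, \op T_j \op G \op Q \op P \mu_j) \\
    &\quad + d_g(\op B_j \op G \op Q \op P \mu_j, \op B_j \op Q \op P \mu_j).
\end{align*}
The last two terms are precisely those that already appear in the proof of \cref{theorem:main_theorem}: using items 5 and 6 (\cref{lemma:missing_lemma,lemma:lipschitz_mean_field_affine}) together with the closeness-to-Gaussian assumption \eqref{eq:kalman_equation_with_epsilon}, they contribute at most $\bigl(L_T(R) + C_B\bigr)\varepsilon$, where the radius $R$ is furnished by item 1 (\cref{lemma:bound_on_QPmu}), which ensures $\ran(\op Q \op P) \subset \mathcal P_R(\real^d \times \real^K)$.

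The genuinely new step is the first term. Applying item 6 with $p = \op Q \op P \mu_j \in \ran(\op Q \op P)$ gives
\[
    d_g(\op T_j \op G \op Q \op P \mu_j^G, \op T_j \op Q \op P \mu_j) \le L_T(R)\, d_g(\op G \op Q \op P \mu_j^G, \op Q \op P \mu_j),
\]
and inserting $\op G \op Q \op P \mu_j$ together with \eqref{eq:kalman_equation_with_epsilon} bounds this by $L_T(R) \bigl[d_g(\op G \op Q \op P \mu_j^G, \op G \op Q \op P \mu_j) + \varepsilon \bigr]$. It then remains to show that the Gaussian projection $\op G$, composed with $\op Q \op P$, is Lipschitz in the $d_g$ metric: since the two Gaussians $\op G \op Q \op P \mu_j^G$ and $\op G \op Q \op P \mu_j$ are fully determined by the means and covariances of $\op Q \op P \mu_j^G$ and $\op Q \op P \mu_j$, and since item 1 forces both to live in $\mathcal P_R$ (so covariances are bounded above and away from singularity), the moment bound \cref{lemma:moment_bound} combined with a direct calculation of $d_g$ between two Gaussians with close parameters yields
\[
    d_g(\op G \op Q \op P \mu_j^G, \op G \op Q \op P \mu_j) \le L_G\, d_g(\op Q \op P \mu_j^G, \op Q \op P \mu_j) \le L_G L_Q L_P\, d_g(\mu_j^G, \mu_j),
\]
where the final step uses items 3 and 4 (\cref{lemma:lipschitz_p,lemma:lipschitz_Q}).

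Combining everything yields $d_g(\mu^G_{j+1}, \mu_{j+1}) \le L_T(R) L_G L_Q L_P\, d_g(\mu_j^G, \mu_j) + \bigl(2L_T(R) + C_B\bigr)\varepsilon$, and the discrete Gronwall lemma together with $\mu^G_0 = \mu_0$ closes the argument. The main obstacle I anticipate is the Lipschitz continuity of $\op G$ on $\ran(\op Q \op P)$: although $\op G$ is nonlinear and not globally Lipschitz in $d_g$, the uniform moment control provided by \cref{lemma:bound_on_QPmu,lemma:moment_bound} should make the explicit estimate between two nondegenerate Gaussians routine, and this is essentially the one place where the proof diverges from that of \cref{theorem:main_theorem}.
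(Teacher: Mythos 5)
Your proof is correct, but it takes a genuinely different route from the paper's. The paper's own proof uses the two-term splitting
\[
    d_g(\op B_j \op G \op Q \op P \mu_j^G, \op B_j \op Q \op P \mu_j)
    \leq d_g(\op B_j \op G \op Q \op P \mu_j^G, \op B_j \op G \op Q \op P \mu_j) + d_g(\op B_j \op G \op Q \op P \mu_j, \op B_j \op Q \op P \mu_j),
\]
bounding the contraction term by the Lipschitz continuity of the full composition $\op B_j \op G \op Q \op P$; this requires, in addition to \cref{lemma:lipschitz_p,lemma:lipschitz_Q,lemma:lipschitz_G}, a dedicated local Lipschitz estimate for the conditioning map $\op B_j$ on Gaussians in $\mathcal G_R$ (\cref{lemma:lipschitz_l2}, proved via the explicit conditional-Gaussian formulas, a Schur-complement bound on $\norm{\mat D^{-1}}$, and \cref{lemma:dg_gaussians}). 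You instead convert every $\op B_j$ acting on a Gaussian into $\op T_j$ via \cref{lemma:mean_field_map} and reuse the $\op T_j$ stability result (\cref{lemma:lipschitz_mean_field_affine}) from the ensemble Kalman proof, at the cost of a three-term decomposition and a slightly larger $\varepsilon$-coefficient $\bigl(2L_T(R) + C_B\bigr)$ instead of $C_B$ --- immaterial for the result. Your applications of \cref{lemma:lipschitz_mean_field_affine} are legitimate: in both instances the second argument is $\op Q \op P \mu_j \in \ran(\op Q \op P)$ as the lemma requires, and the first arguments $\op G \op Q \op P \mu_j^G$ and $\op G \op Q \op P \mu_j$ lie in $\mathcal G_R \subset \mathcal P_R$ since $\op G$ preserves first and second moments and \cref{lemma:bound_on_QPmu} gives the uniform $R$. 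What your route buys is the elimination of \cref{lemma:lipschitz_l2} entirely; what it costs is dependence on the heavier machinery behind \cref{lemma:lipschitz_mean_field_affine} (in particular \cref{lemma:lipschitz_density}), which the paper's GPF proof avoids. One caveat: the step you call routine --- Lipschitz continuity of $\op G$ on $\mathcal P_R$ in the $d_g$ metric --- is exactly the paper's \cref{lemma:lipschitz_G}, and it is less innocent than your sketch suggests: the paper proves it through a weighted Pinsker inequality (\cref{lemma:dg_gaussians,sub:generalized_pinsker}) rather than a direct computation, and a remark in the paper shows $\op G$ is \emph{not} globally Lipschitz, so the restriction to $\mathcal P_R$ (covariances bounded above and away from singularity) is essential, as you correctly anticipated.
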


The proof of this error estimate relies on the auxiliary results~\cref{lemma:bound_on_QPmu,lemma:lipschitz_p,lemma:lipschitz_Q,lemma:missing_lemma} already discussed,
as well as the following two additional results.
\begin{itemize}
    \item
        The map $\op G$ is locally Lipschitz for the metric $d_g$,
        in the sense that for any~$R \ge 1$,
        this map is Lipschitz continuous over the set $\mathcal P_R(\real^d)$ given in~\eqref{eq:space_local_lipschitz}.
        The associated Lipschitz constant is denoted by $L_{\op G} = L_{\op G}(R)$ and diverges as $R \to \infty$.
        This result is proved in~\cref{lemma:lipschitz_G},
        which relies on an auxiliary result shown in~\cref{lemma:dg_gaussians} on the distance between Gaussians in the $d_g$ metric.

    \item
        The map $\op B_j$ is locally Lipschitz for the metric $d_g$ over Gaussians,
        in the sense that for any~$R \ge 1$ and any $j \in \range{0}{J-1}$,
        this map is Lipschitz continuous over $\mathcal G_R(\real^d \times \real^K)$.
        The associated Lipschitz constant is denoted by $L_{\op B} = L_{\op B}(R, \kappa_y)$.
        See~\cref{lemma:lipschitz_l2}.
\end{itemize}
\begin{proof}
    We obtain by the triangle inequality that
       \begin{align*}
        d_{g}(\mu_{j+1}^G, \mu_{j+1})
        &= d_{g}(\op B_j \op G \op Q \op P \mu_{j}^G, \op B_j \op Q \op P \mu_{j})\\
        &\leq d_{g}(\op B_j \op G \op Q \op P \mu_{j}^G, \op B_j \op G \op Q \op P \mu_{j}) + d_{g}(\op B_j \op G \op Q \op P \mu_{j}, \op B_j \op Q \op P \mu_{j}).
    \end{align*}
    It follows from~\cref{lemma:bound_on_QPmu,lemma:lipschitz_p,lemma:lipschitz_Q,lemma:lipschitz_G,lemma:lipschitz_l2} that the composition of maps~$\op B_j \op G \op Q \op P$ is globally Lipschitz continuous on $\mathcal P(\real^d)$ with a constant
    $\ell = \ell(R,\kappa_y, \kappa_{\vect \Psi}, \kappa_{\vect h}, \mat \Sigma, \mat \Gamma)$,
    where $R = R(\kappa_{\vect \Psi}, \kappa_{\vect h}, \mat \Sigma, \mat \Gamma)$ is a positive constant such that $\ran(\op Q \op P) \subset \mathcal P_R(\real^d \times \real^K)$.
    Therefore, the first term on the right hand side may be bounded by
    \begin{align*}
        d_{g}(\op B_j \op G \op Q \op P \mu_{j}^G, \op B_j \op Q \op P \mu_{j})
        \leq \ell d_{g}(\mu_{j}^G, \mu_{j}),
    \end{align*}
    For notational simplicity we again let~$\varepsilon$ be as in~\eqref{eq:epsilon}.
    Using~\cref{lemma:missing_lemma} and definition of~$\varepsilon$,
    the second term may be bounded from above by
    \begin{align*}
        d_{g}(\op B_j \op G \op Q \op P \mu_{j}, \op B_j \op Q \op P \mu_{j})
        &\leq C_{\op B}\, d_{g}(\op G \op Q \op P \mu_{j}, \op Q \op P \mu_{j})
        \leq C_{\op B} \, \varepsilon,
    \end{align*}
    where $C_{\op B} = C_{\op B}(\kappa_y, \kappa_{\vect \Psi}, \kappa_{\vect h}, \mat \Sigma, \mat \Gamma)$ is the constant from~\cref{lemma:missing_lemma}.
    The proof can then be concluded in the same way as that of~\cref{theorem:main_theorem}.
\end{proof}

\begin{corollary}
    [{\bf Accuracy for the Gaussian Projected Filter}]
    \label{theorem:main_theorem_gaussian_projection2}
    Let $(\mat \Sigma, \mat \Gamma)$ satisfy~\cref{assumpenum:assumption4}.
    Suppose that $\Psi_0\colon \real^d \to \real^d$ and $\vect h_0 \colon \real^d \to \real^K$ are functions taking constant values and denote by $B_{\Psi_0,h_0}(r)$ the set of all functions $(\Psi,h)$ satisfying
    $\Psi \in B_{L^{\infty}}(\Psi_0,r)$, $h \in B_{L^{\infty}}(h_0,r)$ and~\cref{assumpenum:assumption2,assumpenum:assumption3}.
    Assume that the probability measures $(\mu_j)_{j \in \range{1}{J}}$ and  $(\mu^K_j)_{j \in \range{1}{J}}$ are obtained respectively from the dynamical systems~\eqref{eq:true_filtering2} and~\eqref{eq:gaussian_projection_filter},
    initialized at the same Gaussian probability measure $\mu_0 = \mu_0^K\in~\mathcal G(\real^d)$.
    That~is,
    \[
        \mu_{j+1} = \op B_j \op Q \op P \mu_j, \qquad
        \mu^G_{j+1} =\op B_j \op G \op Q \op P \mu_j^G.
    \]
    Then for any $\epsilon>0$ there exists $\delta>0$ such that
    \[
        \sup_{Y^\dagger \in B_{y}} \sup_{(\Psi,h) \in B_{\Psi_0,h_0}(\delta)} d_g(\mu^G_J, \mu_J) \leq \epsilon.
    \]
\end{corollary}

\section{Discussion and Future Directions}
\label{sec:discussion}
We have provided the first analysis of the error incurred by ensemble Kalman filters,
as approximations of true filtering distribution, beyond the linear Gaussian setting. We
have employed an appropriate weighted TV metric and obtained new stability
estimates in this metric, in order to establish the approximation results.
Our framing of the problem is motivated
by the framing of the analysis of the particle filter contained in
\cite[Section 1]{MR3375889}, a systematization of the original proof of convergence of
the particle filter appearing in \cite{del1997nonlinear}. Although it introduces new methodology
and theoretical results for nonlinear Kalman filters,
our work leaves open numerous avenues for further
analysis; we now highlight those that we identify as particularly important.
These remaining open problems are substantial, but the framework we map out in this paper is an appropriate one in which to address them.

\begin{enumerate}
    \item[(i)] Our theorems concern the mean field limit
        of the ensemble Kalman filter; it is of interest to study finite
        particle approximations of the mean field limit, along the lines of the work
        in \cite{le2009large,MR2860672} and analogous continuous time analyses in \cite{ding2021ensemble,ding2021ensembleb,ding2020ensemble}. Analysis of
        mean field limits of interacting particle systems is an established field;
        interfacing the natural metrics employed for such analyses (Wasserstein)
        with those employed here (weighted TV) will be required.

    \item[(ii)] We have made boundedness assumptions on $\Psi(\placeholder)$ and $h(\placeholder)$ in this paper. Developing proofs which relax these assumptions, allowing consideration of small nonlinear
        perturbations of the Kalman filter setting, for example, will be very valuable.

    \item[(iii)] Our error bounds are for a finite number of steps and,
        as is typical of finite time error estimates that employ a consistency plus stability implies convergence approach, lead to error constants
        which grow exponentially with the time horizon. The literature on analysis of numerical
        methods for non-autonomous dynamical systems demonstrates that going beyond
        finite time error estimates that
        grow in time is, in general, not possible \cite{stuart1998dynamical}; in that context, generalizing to
        long-time estimates requires assumptions on the long-term stability, or even
        ergodicity, of the dynamical system. Such long-term stability issues are
        widely studied for the true filtering distribution -- see~\cite{ocone1996asymptotic,del2001stability,van2009uniform,tong2012ergodicity,crisan2020stable,sanz2015long} for example; they are complicated by the fact that the nonlinear evolution equation for
        the filtering distribution is non-autonomous due to the observation signal.
        In the context of the EnKF such stability estimates are used
        in the paper \cite{MR3784489} which identifies linear and Gaussian filtering problems in which it is possible to generalize the large particle asymptotic
        analyses of~\cite{le2009large,mandel2011convergence}. There is also analysis
        of the EnKF for nonlinear non-Gaussian problems, such as the data
        assimilation problem for the Navier-Stokes equation, but this work concerns
        only accuracy of state estimation, not the entire filtering distribution~\cite{kelly2014well,biswas2024unified}.

    \item[(iv)] We have assumed a model for evolution of the state which
        employs additive Gaussian noise. Generalizing to a general Markov
        chain would be valuable.  Relatedly it is of interest to relax the assumption of including noise in the dynamical system for the state, to allow for deterministic
        dynamics.

    \item[(v)] We have studied a (widely used) version of the ensemble Kalman
        filter which employs a specific transport map to approximate the conditioning
        step in the filter. Other transport maps are also used in practice,
        such as that leading to the ensemble square root filter; as outlined in \cite[Section 2]{2022arXiv220911371C}.
        Analyzing these other methods would be of great interest.

    \item[(vi)] Filtering can be used to solve inverse problems, as outlined in
        \cite[Section 4]{2022arXiv220911371C}; in particular some of these
        methods rely on filtering over the infinite time horizon, the analysis of which will require new ideas such as in \cite{durmus2020elementary}.

    \item[(vii)] Continuous time versions of our analysis would be of interest,
        and their relationship to the Kushner--Stratonovich equation \cite{cricsan1999interacting};
        the paper \cite{MR3784489}, which studies mean field limits of the ensemble
        Kalman--Bucy filter, may be important in this context; furthermore study of
        the ensemble Kalman sampler \cite{garbuno2020interacting,garbuno2020affine,carrillo2019wasserstein}
        for inverse problems, would be of interest.

    \item[(viii)] The papers \cite{le2009large} and \cite{ding2020ensemble} propose reweighting
        of ensemble Kalman methods to make them unbiased; further analysis of this idea, and the
        development of new methods that can carry this out in a derivative-free fashion, would be of
        interest.

    \item[(ix)] Ensemble Kalman filters in practice employ very small numbers
        of ensemble members, and use covariance (spatial) localization when the unknown
        states are fields; developing analyses which account for low rank approximations
        and localization would be a valuable step for the field.
        Furthermore it would also be of interest to study the problem of quantization of measures \cite{graf2000foundations} in the context of filtering; however this is a very hard problem even in simple low dimensional settings \cite{caglioti2016quantization} and studying it for the evolution of the
        filtering distribution will require substantial new ideas.

    \item[(x)] Particle filters suffer from a curse of dimensionality on
        certain families of high-dimensional problems~\cite{MR2459233,snyder2008obstacles}. Studying ensemble Kalman methods, to determine whether they ameliorate this
        issue in the near-Gaussian setting, would be of interest.
        There are a variety of different models that can be employed to characterize families of high dimensional filtering problems \cite{2018arXiv181006191S}
        and such structural assumptions will undoubtedly affect
        the results that might be obtained for ensemble Kalman methods.

    \item[(xi)]
        Exploiting small noise or large data limits, to establish that the error \eqref{eq:epsilon} is small,
        and then using this fact to analyze the error in the ensemble Kalman filter, would be of great interest.
\end{enumerate}

\vspace{0.1in}
\paragraph{Acknowledgments}
We are grateful to the reviewers for very useful comments on a previous version of this paper.
In particular, we thank the reviewers for pointing out the elementary inequality in \cref{lemma:elementary_gaussians},
and for showing us how this inequality could be used to significantly improve the proofs of~\cref{lemma:gaussians_relation_1_and_inf_norms,lemma:lipschitz_density,lemma:new_lemma_revision}.
Notably, the constructive proof of~\cref{lemma:gaussians_relation_1_and_inf_norms} was proposed by a reviewer.
JAC was supported by the Advanced Grant Nonlocal-CPD (Nonlocal PDEs for Complex Particle Dynamics: Phase Transitions, Patterns and Synchronization) of the European Research Council Executive Agency (ERC) under the European Union’s Horizon 2020 research and innovation programme (grant agreement No. 883363).
JAC was also partially supported by the Engineering and Physical Sciences Research Council (EPSRC) under grants EP/T022132/1 and EP/V051121/1.
FH is supported by start-up funds at the California Institute of Technology.
FH was also supported by the Deutsche Forschungsgemeinschaft (DFG, German Research
Foundation) via project 390685813 - GZ 2047/1 - HCM.
The work of AMS is supported by a Department of Defense Vannevar Bush Faculty Fellowship,
and by the SciAI Center, funded by the Office of Naval Research (ONR), under Grant Number N00014-23-1-2729.
UV is partially supported by the European Research Council (ERC) under the European Union's Horizon 2020 research and innovation programme (grant agreement No 810367),
and by the Agence Nationale de la Recherche under grants ANR-21-CE40-0006 (SINEQ) and ANR-23-CE40-0027 (IPSO).

\vspace{0.1in}
\printbibliography

\appendix


\section{Auxiliary Results}
\label{app:A2}

We begin by presenting an elementary result used throughout the article.
\begin{lemma}
    \label{lemma:auxiliary_ineq_second_moment}
    Suppose that $X$ is a random variable with values in $\real^d$ and finite second moment,
    and let $\vect m := \expect [X]$.
    Then
    \begin{equation}
        \label{eq:second_moment_vector}
        \expect \Bigl[ (X - \vect m)(X - \vect m)^\t\Bigr] = \expect \bigl[XX^\t\bigr] - \vect m \vect m^\t
    \end{equation}
    and
    \begin{equation}
        \label{eq:inequality_second_moment}
        \forall \vect a \in \real^d,
        \qquad
        \expect \Bigl[ (X - \vect a)(X - \vect a)^\t\Bigr]
        \succcurlyeq \expect \Bigl[ (X - \vect m)(X - \vect m)^\t\Bigr].
    \end{equation}
\end{lemma}
\begin{proof}
    We have
    \begin{align*}
        \expect \Bigl[ (X - \vect a)(X - \vect a)^\t\Bigr]
        &= \expect \Bigl[ \bigl((X - \vect m) + (\vect m - \vect a)\bigr)\bigl((X - \vect m) + (\vect m - \vect a)\bigr)^\t\Bigr] \\
        &= \expect \Bigl[ (X - \vect m)(X - \vect m)^\t\Bigr] + (\vect m - \vect a)(\vect m - \vect a)^\t.
    \end{align*}
    Taking $\vect a = \vect 0$, we obtain~\eqref{eq:second_moment_vector}.
    In addition,
    since the second term in the last expression is positive semidefinite,
    the inequality~\eqref{eq:inequality_second_moment} follows.
\end{proof}

The following lemma is very similar to~\cite[Lemma 3.1]{MR2730330};
the only difference is that the weight on the left-hand side is given by $1 + \abs{u}^2$ instead of $u^2$.
We give the proof for completeness.

\begin{lemma}
    [Generalized Pinsker inequality]
    \label{sub:generalized_pinsker}
    Let $g(u) = 1 + \vecnorm{u}^2$ and assume that $\mu_1$, $\mu_2$ are probability measures over $\real^d$ satisfying $\mu_1[g^2] < \infty$ and $\mu_2[g^2] < \infty$.
    Then, if $\mu_1 \ll \mu_2$,
    \[
        d_g(\mu_1, \mu_2)^2 \leq 2 \bigl( \mu_1\left[g^2\right] + \mu_2\left[g^2\right] \bigr) \kl{\mu_1}{\mu_2}.
    \]
\end{lemma}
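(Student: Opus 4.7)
The plan is to combine the density representation of $d_g$ with a Cauchy--Schwarz factorization via the Hellinger distance, and then invoke the classical Hellinger--KL bound. Since $\mu_1 \ll \mu_2$ and both have finite $g^2$-moments, we can use the remark following \cref{definition:weighted_total_variation} to write $d_g(\mu_1, \mu_2) = \int g(u) \abs{\rho_1(u) - \rho_2(u)} \, \d u$, where $\rho_1, \rho_2$ denote densities with respect to any common dominating measure (for example~$\mu_2$ itself, in which case $\rho_2 \equiv 1$).

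The key algebraic identity is $\abs{\rho_1 - \rho_2} = \abs{\sqrt{\rho_1} - \sqrt{\rho_2}} \cdot \bigl(\sqrt{\rho_1} + \sqrt{\rho_2}\bigr)$. Applying Cauchy--Schwarz yields
\[
    d_g(\mu_1,\mu_2)
    \leq \left( \int g^2 \bigl(\sqrt{\rho_1} + \sqrt{\rho_2}\bigr)^2 \d\lambda \right)^{1/2}
    \left( \int \bigl(\sqrt{\rho_1} - \sqrt{\rho_2}\bigr)^2 \d\lambda \right)^{1/2}.
\]
The elementary inequality $(a+b)^2 \leq 2(a^2 + b^2)$ bounds the first factor by $\sqrt{2(\mu_1[g^2] + \mu_2[g^2])}$, while the second factor is by definition the Hellinger distance $H(\mu_1, \mu_2)$.

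It then remains to invoke the classical Hellinger--KL inequality $H^2(\mu_1, \mu_2) \leq \kl{\mu_1}{\mu_2}$, which is a short consequence of Jensen's inequality. Specifically, writing $h = \d\mu_1 / \d\mu_2$, a direct calculation gives $H^2/2 = 1 - \mu_2[\sqrt{h}]$, and the elementary inequality $1 - x \leq -\log x$ together with $\mu_2[\sqrt{h}] = \mu_1[h^{-1/2}]$ and Jensen applied to the convex function $-\log$ yield
\[
    1 - \mu_2[\sqrt{h}] \leq -\log \mu_1[h^{-1/2}] \leq \mu_1\!\left[\tfrac{1}{2}\log h\right] = \tfrac{1}{2}\kl{\mu_1}{\mu_2}.
\]
Chaining these estimates produces the claimed bound with the stated constant~$2$.

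There is no serious obstacle; the only real choice is the factorization used in Cauchy--Schwarz. A naive splitting through $\abs{h-1}/\sqrt{h+1} \cdot \sqrt{h+1}$ would instead force the use of the Csisz\'ar-type inequality $3(x-1)^2 \leq (2x+4)(x\log x - x + 1)$ and produce a worse constant; routing the estimate through the Hellinger distance is what yields the clean constant~$2$ and matches the form quoted from \cite{MR2730330}. The only mild subtlety is to ensure that the manipulations with $h^{-1/2}$ are justified on the support of $\mu_1$, which is immediate from $\mu_1 \ll \mu_2$.
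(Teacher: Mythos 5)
Your proof is correct, and it takes a genuinely different route from the paper's. The paper follows \cite[Lemma 3.1]{MR2730330}: writing $f = \d\mu_1/\d\mu_2$, it lower-bounds the Kullback--Leibler divergence by a Taylor expansion of $u \mapsto u \log u$ around $u = 1$, giving $\kl{\mu_1}{\mu_2} \geq \frac{1}{2} \int \lvert f - 1 \rvert^2 / \max\{1, f\} \, \d\mu_2$, and then applies Cauchy--Schwarz to $d_g(\mu_1, \mu_2) = \int g \lvert f - 1 \rvert \, \d \mu_2$ with the weight $\theta = \max\{1, f\} \leq 1 + f$. You instead factor $\lvert \rho_1 - \rho_2 \rvert = \lvert \sqrt{\rho_1} - \sqrt{\rho_2} \rvert \, (\sqrt{\rho_1} + \sqrt{\rho_2})$, pass through the Hellinger distance via Cauchy--Schwarz and $(a+b)^2 \leq 2(a^2+b^2)$, and close with the comparison $H^2(\mu_1,\mu_2) \leq \kl{\mu_1}{\mu_2}$, which you reprove from $1 - x \leq -\log x$ and Jensen; the identity $\mu_2[\sqrt h] = \mu_1[h^{-1/2}]$ is legitimate since $\mu_1(\{h = 0\}) = 0$, as you note. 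Both arguments are short and yield the identical constant $2\bigl(\mu_1[g^2] + \mu_2[g^2]\bigr)$. Your version is self-contained (no appeal to the Taylor computation) and isolates the potentially sharper intermediate estimate $d_g(\mu_1, \mu_2) \leq \sqrt{2\bigl(\mu_1[g^2] + \mu_2[g^2]\bigr)} \, H(\mu_1, \mu_2)$, since $H^2 \leq \kl{\placeholder}{\placeholder}$ can be far from equality; the paper's weight $\max\{1, f\}$, by contrast, is pointwise smaller than your $(\sqrt{\rho_1}+\sqrt{\rho_2})^2 \leq 2(\rho_1+\rho_2)$ bound, though this advantage is not exploited in the final constant. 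One aside in your write-up is backwards: the ``naive'' splitting through $\lvert h - 1\rvert$ weighted by $\sqrt{(2h+4)/3}$, combined with the Csisz\'ar-type inequality $3(x-1)^2 \leq (2x+4)(x \log x - x + 1)$, in fact yields $d_g(\mu_1,\mu_2)^2 \leq \bigl(\tfrac{2}{3}\mu_1[g^2] + \tfrac{4}{3}\mu_2[g^2]\bigr) \kl{\mu_1}{\mu_2}$, which is \emph{sharper} than the stated constant, not worse; this does not affect the validity of your proof, which establishes the lemma exactly as claimed. Finally, your tacit extension of the density representation of $d_g$ from Lebesgue densities to an arbitrary common dominating measure is harmless: the supremum in the definition is attained at $f = g \, \sign(\rho_1 - \rho_2)$, which is an admissible test function.
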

\begin{proof}
    Denote the density by $f := \frac{\d \mu_1}{\d \mu_2}$, noting that this is non-negative.
    Applying Taylor's formula to the function $\ell\colon u \mapsto u \log u$ around $u = 1$,
    we deduce that
    \[
        \forall u \geq 0, \qquad
        u \log u
        \geq (u - 1) + \frac{1}{2} \left( \min_{v \in I} \ell''(v) \right) (u-1)^2
        = (u - 1) + \frac{(u-1)^2}{2 \max\{1, u\}} ,
    \]
    where $I = [\min\{1, u\}, \max\{1, u\}]$.
    Therefore
    \begin{align*}
        \kl{\mu_1}{\mu_2} = \int_{\real^d} (f \, \log f)(u)  \,  \mu_2(\d u)
        \geq \int_{\real^d} (f(u) - 1) \, \mu_2(\d u) + \frac{1}{2} \int_{\real^d} \frac{\lvert f(u) - 1 \rvert^2}{\max\{1, f(u)\}} \, \mu_2(\d u),
    \end{align*}
    Let $\theta(u) = \max\{1, f(u)\}$.
    The first term on the right-hand side is zero, and so we have that
    \begin{align*}
        d_g(\mu_1, \mu_2)^2
        &= \left( \int_{\real^d} g(u) \left\lvert  f(u) - 1 \right \rvert \, \mu_2(\d u) \right)^2
        \leq \int_{\real^d} \lvert g(u) \rvert^2 \theta(u)  \, \mu_2(\d u) \, \int_{\real^d} \frac{\left\lvert  f(u) - 1 \right \rvert^2}{\theta(u)} \, \mu_2(\d u) \\
        &\leq \int_{\real^d} \lvert g(u) \rvert^2 \bigl(f(u) + 1\bigr)  \, \mu_2(\d u) \, 2 \, \kl{\mu_1}{\mu_2}
        = 2 \Bigl( \mu_1\left[g^2\right] + \mu_2\left[g^2\right] \Bigr) \, \kl{\mu_1}{\mu_2},
    \end{align*}
    which concludes the proof.
\end{proof}

\begin{lemma}
    \label{lemma:elementary_gaussians}
    For all $d \in \nat^+$ and $\alpha > 0$,
    there is $C > 0$ such that
    for all $x_0, x_1, m_0, m_1 \in \real^d$
    and all symmetric positive definite matrices $S_0, S_1 \in \real^{d \times d}$ satisfying
    \[
        S_0 \succcurlyeq \frac{1}{\alpha} I_d, \qquad S_1 \succcurlyeq \frac{1}{\alpha} I_d,
    \]
    it holds that
    \begin{align}
        \notag
        (2\pi)^{\frac{d}{2}} \bigl\lvert g_0(x_0) - g_1(x_1) \bigr\rvert
        &\leq
        \frac{\alpha^{\frac{1+d}{2}}}{\sqrt{\e}} \lvert x_1 - x_0 - m_1 + m_0 \rvert + \frac{\alpha^{1 + \frac{d}{2}}}{\e} \lVert S_1 - S_0 \rVert \\
        \label{eq:auxiliary_gaussians_precise}
        &\qquad \qquad  + \alpha^d \left\lvert \sqrt{\det S_1} - \sqrt{\det S_0} \right\rvert.
    \end{align}
    Here $g_0$ and $g_1$ denote the densities of $\normal (m_0, S_0)$ and $\normal(m_1, S_1)$, respectively.
\end{lemma}
\begin{proof}
    For $s \in [0, 1]$,
    let
    \(
    x_s = (1-s) x_0 + s x_1,
    \)
    as well as
    \(
    m_s = (1-s) m_0 + s m_1
    \)
    and
    \(
    S_s = (1-s) S_0 + s S_1.
    \)
    Let us also introduce the non-normalized densities
    \[
        \widetilde g_0 = \exp \left( - \frac{1}{2} \bigl\lvert x - m_0 \bigr\rvert_{S_0}^2 \right), \qquad
        \widetilde g_1 = \exp \left( - \frac{1}{2} \bigl\lvert x - m_1 \bigr\rvert_{S_1}^2 \right),
    \]
    and let
    \(
    \lambda(s) \coloneq \exp \left( - \frac{1}{2} \bigl\lvert x_s - m_s \bigr\rvert_{S_s}^2 \right).
    \)
    It holds that
    \begin{align*}
        \widetilde g_1(x_1) - \widetilde g_0(x_0)
        = \int_{0}^{1} \frac{\d \lambda}{\d s}(s) \, \d s
        &= - \int_{0}^{1} \Bigl[  (x_1 - x_0 - m_1 + m_0)^\t S_s^{-1} (x_s - m_s) \\
        & \qquad \qquad \quad - \, \frac{1}{2} (x_s - m_s)^\t S_s^{-1}(S_1 - S_0) S_s^{-1} (x_s - m_s) \Bigr] \lambda(s) \, \d s.
    \end{align*}
    Therefore,
    using that $\max_{z \in \real} \left\lvert z \e^{-\frac{z^2}{2}} \right\rvert = \frac{1}{\sqrt{\e}}$ and  $\max_{z \in \real} \left\lvert z^2 \e^{-\frac{z^2}{2}} \right\rvert = \frac{2}{\e}$,
    we deduce that
    \begin{align*}
        \bigl\lvert \widetilde g_1(x_1) - \widetilde g_0(x_0) \bigr\rvert
        &\leq  \int_{0}^1 \Bigl[ \lvert x_1 - x_0 - m_1 + m_0 \rvert_{S_s}  \left\lvert x_s - m_s \right\rvert_{S_s}  \\
        &\qquad \qquad + \frac{1}{2} \left\lVert S_s^{-{\frac{1}{2}}}(S_1 - S_0)S_s^{-{\frac{1}{2}}} \right\rVert \left\lvert x_s - m_s \right\rvert_{S_s}^2 \Bigr] \lambda(s) \, \d s \\
        &\leq \sqrt{\frac{\alpha}{\e}} \lvert x_1 - x_0 - m_1 + m_0 \rvert
        + \frac{\alpha}{\e} \lVert S_1 - S_0 \rVert.
    \end{align*}
    Applying the triangle inequality
    \[
        \bigl\lvert g_0(x_0) - g_1(x_1) \bigr\rvert
        \leq \frac{\bigl\lvert \widetilde g_0(x_0) - \widetilde g_1(x_1) \bigr\rvert}{\sqrt{(2 \pi)^d \det S_0}}
        + \left\lvert \frac{1}{\sqrt{(2 \pi)^d \det S_0}} -  \frac{1}{\sqrt{(2 \pi)^d \det S_1}} \right\rvert  \bigl\lvert \widetilde g_1(x_1) \bigr\rvert,
    \]
    we obtain~\eqref{eq:auxiliary_gaussians_precise},
    which concludes the proof.
\end{proof}

\begin{lemma}
    \label{lemma:gaussians_relation_1_and_inf_norms}
    Denote by $g(\placeholder; \vect m, \vect S)$ the Lebesgue density of $\normal(\vect m, \mat S)$
    and by $\mathcal S^K_{\alpha}$ the set of symmetric~$K \times K$ matrices~$\mat M$ satisfying
    \begin{equation}
        \label{eq:bnd}
        \frac{1}{\alpha} \mat I_K \preccurlyeq \mat M \preccurlyeq \alpha \mat I_K.
    \end{equation}
    Then for all $\alpha \geq 1$, there exists $L_{\alpha} > 0$ such that
    for all parameters $(c_1, \vect m_1, \mat S_1) \in \real \times \real^K \times \mathcal S^K_{\alpha}$ and $(c_2, \vect m_2, \mat S_2) \in \real \times \real^K \times \mathcal S^K_{\alpha}$,
    \begin{equation}
        \label{eq:gaussian_auxiliary_lemma}
        \norm{\mathfrak h}_{\infty} \leq L_{\alpha} \norm{\mathfrak h}_{1},
        \qquad \mathfrak h(y) = c_1 g(y; \vect m_1, \mat S_1) - c_2 g(y; \vect m_2, \mat S_2).
    \end{equation}
\end{lemma}
\begin{remark}
    When $c_2=0$ then equation~\eqref{eq:gaussian_auxiliary_lemma} may be viewed as an inverse inequality.
    It then simply states that the $L^\infty$ norm of the density of a normal random variable is bounded from above by the~$L^1$ norm,
    uniformly for all densities from the set of Gaussians with a covariance matrix satisfying~\eqref{eq:bnd}.
\end{remark}

\begin{proof}
    For conciseness, let
    \(
    g_1(\placeholder) = g(\placeholder; \vect m_1, \mat S_1)
    \)
    and
    \(
    g_2(\placeholder) = g(\placeholder; \vect m_2, \mat S_2).
    \)
    \paragraph{Step 1. Simplification}
    It is sufficient to prove the statement for $c_1 = c_2 = 1$.
    Indeed, suppose that there is $\widetilde L_{\alpha} > 0$ such that
    \begin{equation}
        \label{eq:first_simple}
        \forall  (\vect m_1, \mat S_1) \in \real^K \times \mathcal S^K_{\alpha}, \qquad
        \forall  (\vect m_2, \mat S_2) \in \real^K \times \mathcal S^K_{\alpha}, \qquad
        \norm{g_1 - g_2}_{\infty} \leq \widetilde L_{\alpha} \norm{g_1 - g_2}_{1}.
    \end{equation}
    Then, by the triangle inequality,
    it holds that
    \begin{align*}
        \bigl\lVert c_1 g_1 - c_2 g_2 \bigr\rVert_{\infty}
        &\leq
        \lvert c_1 \rvert \bigl\lVert  g_1 - g_2 \bigr\rVert_{\infty} +
        \lvert c_1 - c_2 \rvert \bigl\lVert  g_2 \bigr\rVert_{\infty} \\
        &\leq \lvert c_1 \rvert \widetilde L_{\alpha} \bigl\lVert  g_1 - g_2 \bigr\rVert_{1}
        + \lvert c_1 - c_2 \rvert \bigl\lVert  g_2 \bigr\rVert_{\infty} \\
        &\leq  \widetilde L_{\alpha} \bigl\lVert  c_1 g_1 - c_2 g_2 \bigr\rVert_{1}
        + \lvert c_2 - c_1 \rvert \bigl\lVert  g_2 \bigr\rVert_{1}
        + \lvert c_2 - c_1 \rvert \bigl\lVert  g_2 \bigr\rVert_{\infty}.
    \end{align*}
    By Jensen's inequality,
    it holds that
    \[
        \lvert c_1 - c_2 \rvert = \left\lvert \int_{\real^d} c_1 g_1(x) - c_2 g_2(x) \, \d x \right\rvert
        \leq \bigl\lVert  c_1 g_1 - c_2 g_2 \bigr\rVert_{1},
    \]
    and so we deduce that
    \[
        \bigl\lVert c_1 g_1 - c_2 g_2 \bigr\rVert_{\infty}
        \leq  \bigl\lVert  c_1 g_1 - c_2 g_2 \bigr\rVert_{1} \left( \widetilde L_{\alpha} + 1 + \sqrt{\frac{\alpha^K}{(2\pi)^K}} \right).
    \]
    Furthermore, since both sides of the inequality~\eqref{eq:first_simple} are invariant under translation,
    it is sufficient to consider the case where $\vect m_2 = \vect 0$,
    which we do from now on.
    Finally, note that
    \begin{align*}
        g_1(y) - g_2(y)
        = \frac{1}{\sqrt{\det \mat S_2}} \left( g\left(\sqrt{\mat S_2^{-1}}y; \sqrt{\mat S_2^{-1}} \vect m_1, \sqrt{\mat S_2^{-1}}\mat S_1\sqrt{\mat S_2^{-1}}\right) - g\left(\sqrt{\mat S_2^{-1}}y; \vect 0, \mat I_K\right) \right),
    \end{align*}
    and so we can also assume without loss of generality that $\mat S_2 = \mat I_K$.
    Indeed, assume that the inequality~\eqref{eq:first_simple} is satisfied in this particular case with a constant $\widehat L_{\alpha}$.
    Since
    \[
        \frac{1}{\alpha^2} \mat I_K
        \preccurlyeq \frac{1}{\alpha \norm{\mat S_2}}\mat I_K
        \preccurlyeq \sqrt{\mat S_2^{-1}}\mat S_1\sqrt{\mat S_2^{-1}}
        \preccurlyeq \alpha \norm{\mat S_2^{-1}} \mat I_K
        \preccurlyeq \alpha^2 \mat I_K,
    \]
    we deduce that if $(\mat S_1, \mat S_2) \in \mathcal S^{K}_{\alpha} \times S^{K}_{\alpha}$ for some $\alpha > 0$,
    then
    \[
        \left(\sqrt{\mat S_2^{-1}}\mat S_1\sqrt{\mat S_2^{-1}}, \mat I_K\right) \in \mathcal S^K_{\alpha^2} \times \mathcal S^K_{\alpha^2}.
    \]
    Therefore, using the change of variable $y \mapsto \sqrt{\mat S_2^{-1}}y$
    together with~\eqref{eq:first_simple} in the particular case~$\mat S_2 = \mat I_K$,
    we obtain that
    \begin{align*}
        &\norm{g(\placeholder; \vect m_1, \mat S_1) - g(\placeholder; \vect 0, \mat S_2)}_{\infty} \\
        &\qquad = \frac{1}{\sqrt{\det \mat S_2}} \left\lVert  g\left(\placeholder; \sqrt{\mat S_2^{-1}} \vect m_1, \sqrt{\mat S_2^{-1}}\mat S_1\sqrt{\mat S_2^{-1}}\right) - g\left(\placeholder; \vect 0, \mat I_K\right) \right\rVert_{\infty} \\
        &\qquad \leq  \frac{\widehat L_{\alpha^2} }{\sqrt{\det \mat S_2}} \norm*{g\left(\placeholder; \sqrt{\mat S_2^{-1}} \vect m_1, \sqrt{S_2^{-1}}S_1\sqrt{S_2^{-1}}\right) -  g\left(\placeholder; \vect 0, \mat I_K\right)}_{1} \\
        &\qquad = \frac{\widehat L_{\alpha^2}}{\sqrt{\det \mat S_2}} \norm*{g(\placeholder; \vect m_1, \mat S_1) - g(\placeholder; \vect 0, \mat S_2)}_{1}
        \leq \alpha^{\frac{K}{2}}\widehat L_{\alpha^2} \norm*{g(\placeholder; \vect m_1, \mat S_1) - g(\placeholder; \vect 0, \mat S_2)}_{1},
    \end{align*}
    and so the bound~\eqref{eq:first_simple} is valid in general with $\widetilde L_{\alpha} = \alpha^{\frac{K}{2}} \widehat L_{\alpha^2}$.

    \paragraph{Step 2. Proof of the simplified statement}
    It remains to show that there exists for all $\alpha > 0$ a constant~$\widehat L_{\alpha}$ such that
    the following inequality holds for all $(\vect m, \mat S) \in \real^K \times \mathcal S^{K}_{\alpha}$:
    \begin{equation}
        \label{eq:simplified_case}
        \norm{\mathfrak h}_{\infty} \leq \widehat L_{\alpha} \norm{\mathfrak h}_{1},
        \qquad \mathfrak h(y) = g(y; \vect m, \mat S) - g(y; \vect 0, \mat I_K).
    \end{equation}
    To this end, fix $\alpha > 0$, fix $\varepsilon \in (0, 1)$
    and assume first that $\norm{\mat S - \mat I_K} \geq \varepsilon$.
    By the lower bound on the total variation distance between Gaussians in~\cite[Proposition 2.2]{2018arXiv181008693D},
    we have that
    \begin{align*}
        \frac{1}{2}\norm{\mathfrak h}_1
        &\geq 1 - \frac{\det \mat S^{1/4} \det \mat I_K^{1/4}}{\det \left(\frac{\mat S + \mat I_K}{2} \right)^{1/2}} \exp \left( - \frac{1}{8} \vect m^\t \left( \frac{\mat S + \mat I_K}{2} \right)^{-1} \vect m \right).
    \end{align*}
    In particular, it holds that
    \begin{align}
        \label{eq:case_1_1}
        \frac{1}{2}\norm{\mathfrak h}_1
        &\geq 1 - \frac{\det \mat S^{1/4} \det \mat I_K^{1/4}}{\det \left(\frac{\mat S + \mat I_K}{2} \right)^{1/2}}
        = 1 - \prod_{i=1}^K \sqrt{\frac{\sqrt{\lambda_{i}}}{\frac{\lambda_{i} + 1}{2}}}
        \geq 1 - \sqrt{\frac{2\sqrt{1 + \varepsilon}}{2 + \varepsilon}},
    \end{align}
    where $(\lambda_{i})_{i=1}^{K}$ are the eigenvalues of $\mat S$.
    In the last inequality, we used that,
    since all the terms in the product are bounded from above by 1 by the arithmetic mean-geometric mean inequality,
    and since at least one eigenvalue is not in the interval $(1 - \varepsilon, 1 + \varepsilon)$
    given that $\norm{\mat S - \mat I_K} \geq \varepsilon$,
    it holds that
    \[
        \prod_{i=1}^K \sqrt{\frac{\sqrt{\lambda_{i}}}{\frac{\lambda_{i} + 1}{2}}}
        \leq
        \max_{|\lambda - 1| \geq \varepsilon}  \sqrt{\frac{\sqrt{\lambda}}{\frac{\lambda + 1}{2}}}
        = \sqrt{\frac{2\sqrt{1 + \varepsilon}}{2 + \varepsilon}}.
    \]
    On the other hand,
    since $S \succcurlyeq \frac{1}{\alpha} I_K$,
    it holds that $\det S \geq \frac{1}{\alpha^K}$,
    and so
    \begin{equation}
        \label{eq:case_1_2}
        \norm{\mathfrak h}_{\infty} \leq 2 \left( \frac{\alpha}{2\pi} \right)^{\frac{K}{2}}.
    \end{equation}
    Combining~\eqref{eq:case_1_1} and~\eqref{eq:case_1_2} gives that
    \[
        \norm{\mathfrak h}_1
        \geq C_1 \norm{\mathfrak h}_{\infty}, \qquad
        C_1 :=
        \left( 1 - \sqrt{\frac{2\sqrt{1 + \varepsilon}}{2 + \varepsilon}} \right)
        \left(\frac{2\pi}{\alpha}\right)^{\frac{K}{2}}.
    \]

    Consider now the case where $\norm{\mat S - \mat I_K} \leq \varepsilon$.
    Since $S \mapsto \sqrt{\det S}$ is Lipschitz continuous
    over the set of symmetric positive definite matrices $S$ such that $\norm{\mat S - \mat I_K} \leq \varepsilon$,
    with a Lipschitz constant we denote by~$c_{\varepsilon}$,
    it holds by~\cref{lemma:elementary_gaussians} that
    \begin{align*}
        \norm{\mathfrak h}_{\infty}
            &\leq
            (2\pi)^{- \frac{K}{2}}
            \left(
                \frac{\alpha^{\frac{1+K}{2}}}{\sqrt{\e}} \lvert m \rvert + \frac{\alpha^{1 + \frac{K}{2}}}{\e} \lVert \mat S - \mat I_K \rVert
                + \alpha^K \left\lvert \sqrt{\det S} - \sqrt{\det \mat I_K} \right\rvert
            \right) \\
            &\leq
            (2\pi)^{- \frac{K}{2}}
            \left(
                \frac{\alpha^{\frac{1+K}{2}}}{\sqrt{\e}} \lvert m \rvert
            + \left( \frac{\alpha^{1 + \frac{K}{2}}}{\e} + c_{\varepsilon} \alpha^K \right) \lVert \mat S - \mat I_K \rVert \right).
    \end{align*}
    In view of~\eqref{eq:case_1_2},
    this implies that there exists a constant $C$ depending only on~$(\varepsilon, \alpha, K)$ such that
    \begin{equation}
        \label{eq:linf_bound}
        \norm{\mathfrak h}_{\infty}
        \leq C \Bigl( \min \bigl\{ \lvert m \rvert, 1 \bigr\}  + \lVert \mat S - \mat I_K \rVert \Bigr).
    \end{equation}
    On the other hand,
    since the characteristic function of $\mathcal N(m, \mat S)$
    is given by $u \mapsto \e^{{\rm i} m^\t u - \frac{1}{2} u^\t \mat S u}$,
    it holds by definition of the characteristic function that
    \[
        \forall u \in \real^K, \qquad
        \e^{{\rm i} m^\t u - \frac{1}{2} u^\t \mat S u} - \e^{-\frac{\lvert u \rvert^2}{2}}
        = \int_{\real^K} \e^{{\rm i} u^\t x } \mathfrak h(x) \, \d x.
    \]
    Therefore, it holds that
    \begin{align*}
        \lVert \mathfrak h \rVert_1
            &\geq
            \sup_{|u| \leq 1} \left\lvert  \e^{{\rm i} m^\t u - \frac{1}{2} u^\t \mat S u} - \e^{-\frac{\lvert u \rvert^2}{2}} \right \rvert
            = \sup_{|u| \leq 1} \left\lvert  \e^{- \frac{1}{2} u^\t \mat S u} - \e^{- {\rm i} m^\t u -\frac{\lvert u \rvert^2}{2}} \right \rvert.
    \end{align*}
    It is clear from elementary geometry in the complex plane that
    \[
        \forall u \in \real^K, \qquad
        \left\lvert  \e^{- \frac{1}{2} u^\t \mat S u} - \e^{- {\rm i} m^\t u -\frac{\lvert u \rvert^2}{2}} \right \rvert
        \geq
        \max
        \left\{
            \left\lvert  \sin(m^\t u) \right\rvert \e^{-\frac{\lvert u \rvert^2}{2}} ,
            \left\lvert  \e^{- \frac{1}{2} u^\t \mat S u} - \e^{-\frac{\lvert u \rvert^2}{2}} \right \rvert
        \right\},
    \]
    and so we have
    \begin{align*}
        \lVert \mathfrak h \rVert_1
            &\geq
            \max
            \left\{
                \sup_{|u| \leq 1}
                \left\lvert  \sin(m^\t u) \right\rvert \e^{-\frac{\lvert u \rvert^2}{2}} ,
                \sup_{|u| \leq 1}
                \left\lvert  \e^{- \frac{1}{2} u^\t \mat S u} - \e^{-\frac{\lvert u \rvert^2}{2}} \right \rvert
            \right\} \\
            &\geq
            \max
            \left\{
                \e^{-\frac{1}{2}} \sup_{|u| \leq 1}
                \left\lvert  \sin(m^\t u) \right\rvert ,
                \frac{\e^{-1}}{2}
                \sup_{|u| \leq 1}
                \left\lvert  u^\t (\mat S - \mat I_K) u  \right \rvert
            \right\} \\
            &=
            \max
            \left\{
                \e^{-\frac{1}{2}} \sup_{|u| \leq 1}
                \left\lvert  \sin(m^\t u) \right\rvert ,
                \frac{\e^{-1}}{2}
                \lVert \mat S - \mat I_K \rVert
            \right\}.
    \end{align*}
    In the second inequality, we used that $u^\t S u \leq 1 + \varepsilon \leq 2$ for all $|u| \leq 1$,
    together with the elementary inequality $\left\lvert \e^a - \e^b \right\rvert \geq \e^{\min\{a, b\}} |a - b|$.
    To conclude,
    considering the particular value
    \[
        u = \frac{m}{|m| \max\left\{ 1, \frac{4|m|}{\pi} \right\}},
    \]
    we obtain that
    \[
        \sup_{|u| \leq 1} \left\lvert  \sin(m^\t u) \right\rvert
        \geq \sin\left(\min \left\{|m|, \frac{\pi}{4} \right\}\right)
        = \int_{0}^{\min \left\{|m|, \frac{\pi}{4} \right\}}
        \cos (t) \, \d t
        \geq \cos \left(\frac{\pi}{4}\right) \min \left\{|m|, \frac{\pi}{4} \right\}.
    \]
    Thus, using that $\max\{A, B\} \geq \frac{A}{2} + \frac{B}{2}$,
    we obtain
    \begin{align}
        \notag
        \lVert \mathfrak h \rVert_1
            &\geq
            \max
            \left\{
                \e^{-\frac{1}{2}} \cos\left(\frac{\pi}{4}\right)
                \min \left\{ |m|, \frac{\pi}{4} \right\},
                \frac{\e^{-1}}{2}
                \lVert \mat S - \mat I_K \rVert
            \right\} \\
            \label{eq:l1_bound}
            &\geq \frac{1}{2} \min \Bigl\{\e^{-\frac{1}{2}} \cos\left(\frac{\pi}{4}\right) \frac{\pi}{4}, \frac{\e^{-1}}{2} \Bigr\}
            \Bigl( \min \left\{ \lvert m \rvert, 1 \right\}  + \lVert \mat S - \mat I_K \rVert \Bigr)
    \end{align}
    Combining~\eqref{eq:l1_bound} with~\eqref{eq:linf_bound} leads to
    $\lVert \mathfrak h \rVert_{1} \geq C_2 \lVert \mathfrak h \rVert_{\infty}$,
    which concludes the proof of the case $\norm{\mat S - \mat I_K} \leq \varepsilon$.
    Consequently, the statement~\eqref{eq:simplified_case} holds in general with constant $\widehat L_{\alpha} = \min\{C_1, C_2\}^{-1}$.
\end{proof}

\begin{lemma}
    \label{lemma:lipschitz_density}
    Let~$\op P$ and $\op Q$ denote the operators on probability measures given respectively in~\eqref{eq:Markov_kernel} and~\eqref{eq:definition_Q}.
    Suppose that~\cref{assumption:ensemble_kalman} is satisfied and that $|h|_{C^{0,1}} \leq \ell_{\vect h} < \infty$.
    Then there is~$L = L(\kappa_{\vect \Psi}, \kappa_{\vect h}, \ell_{\vect h}, \mat \Sigma, \mat \Gamma)$ such that
    for all $(u_1, u_2, y) \in \real^d \times \real^d \times \real^K$ and all $\mu \in \mathcal P(\real^d)$
    the density of $p = \op Q \op P \mu$ satisfies
    \begin{equation}
        \label{eq:lipschitz_density}
        \lvert p(u_1, y) - p(u_2, y) \rvert \leq L \lvert u_1 - u_2 \rvert
        \exp \left(- \frac{1}{4} \left( \min\Bigl\{\vecnorm{u_1}_{\mat \Sigma}^2, \vecnorm{u_2}_{\mat \Sigma}^2\Bigr\} + \vecnorm{y}_{\mat \Gamma}^2\right) \right).
    \end{equation}
\end{lemma}
\begin{proof}
    Throughout this proof, $C$ denotes a constant whose value is irrelevant in the context,
    depends only on $\kappa_{\vect \Psi}, \kappa_{\vect h}, \ell_{\vect h}, \mat \Sigma, \mat \Gamma$,
    and may change from line to line.
    Since the function $g(x):= \e^{-x^2}$ has derivative~$-2 x \e^{-x^2}$
    and since $\lvert x \e^{-x^2} \rvert \leq \e^{-\frac{2x^2}{3}}$ for all $x \in \real$,
    it holds for all $(a, b) \in \real^2$ that there is $\xi$ between~$|a|$ and $|b|$ such that
    \begin{align}
        \label{eq:elementary_inequality_gaussian_density}
        \left\lvert \e^{-a^2} - \e^{-b^2} \right\rvert
        =  \abs{b - a} \, \lvert g'(\xi) \rvert
        \leq 2 \abs{b - a} \left( \e^{-\frac{2a^2}{3}} + \e^{-\frac{2b^2}{3}} \right).
    \end{align}
    Using this inequality with $a^2 = \frac{1}{2} \vecnorm*{u_1 - \vect \Psi(v)}_{\mat \Sigma}^2$ and $b^2 = \frac{1}{2} \vecnorm*{u_2 - \vect \Psi(v)}_{\mat \Sigma}^2$,
    and then using the triangle inequality,
    we deduce that, for all $(u_1, u_2, v) \in \real^{d} \times \real^{d} \times \real^{d}$,
    \begin{align*}
        &\left\lvert \exp \left( - \frac{1}{2} \vecnorm*{u_1 - \vect \Psi(v)}_{\mat \Sigma}^2 \right) -\exp \left( - \frac{1}{2} \vecnorm*{u_2 - \vect \Psi(v)}_{\mat \Sigma}^2 \right) \right\rvert \\
        \label{eq:initial_inequality}
        & \hspace{2cm} \leq C \vecnorm{u_2 - u_1}_{\mat \Sigma} \left( \exp \left( - \frac{1}{3} \vecnorm*{u_1 - \vect \Psi(v)}_{\mat \Sigma}^2 \right) + \exp \left( - \frac{1}{3} \vecnorm*{u_2 - \vect \Psi(v)}_{\mat \Sigma}^2 \right) \right).
    \end{align*}
    Integrating out the $v$ variable with respect to $\mu$ and using the equivalence of norms,
    we obtain that
    \begin{align*}
        \lvert \op P \mu(u_1) - \op P \mu(u_2) \rvert
        &\leq
        C \abs{u_1 - u_2} \int_{\real^{d}} \exp \left( - \frac{1}{3} \vecnorm*{u_1 - \vect \Psi(v)}_{\mat \Sigma}^2 \right) \, \mu(\d v) \\
        & \quad + C \abs{u_1 - u_2} \int_{\real^{d}} \exp \left( - \frac{1}{3} \vecnorm*{u_2 - \vect \Psi(v)}_{\mat \Sigma}^2 \right) \, \mu(\d v).
    \end{align*}
    By Young's inequality, it holds for all $\delta > 0$ that
    \begin{equation}
        \label{eq:young}
        \forall (a, b) \in \real^d \times \real^d, \qquad
        \vecnorm*{a - b}_{\mat \Sigma}^2
        \geq \frac{1}{1 + \delta} \vecnorm*{a}_{\mat \Sigma}^2 -
        \frac{1}{\delta} \vecnorm*{b}_{\mat \Sigma}^2.
    \end{equation}
    Using this inequality with $\delta = \frac{1}{3}$ together with the assumption that $\vect \Psi$ is bounded,
    we deduce that
    \begin{align}
        \notag
        \lvert \op P \mu(u_1) - \op P \mu(u_2) \rvert
        &\leq
        C \abs{u_1 - u_2} \int_{\real^{d}} \exp \left( - \frac{1}{4} \vecnorm*{u_1}_{\mat \Sigma}^2 \right) \, \mu(\d v)
        + C \abs{u_1 - u_2} \int_{\real^{d}} \exp \left( - \frac{1}{4} \vecnorm*{u_2}_{\mat \Sigma}^2 \right) \, \mu(\d v) \\
        \label{eq:techincal_lemma_bound1}
        &\leq 2 C \abs{u_1 - u_2} \exp \left( - \frac{1}{4}  \min\Bigl\{\vecnorm{u_1}_{\mat \Sigma}^2, \vecnorm{u_2}_{\mat \Sigma}^2\Bigr\} \right).
    \end{align}

    Next, letting $h_s = (1-s) h(u_1) + s h(u_2)$
    and using a reasoning similar to that in the proof of~\cref{lemma:elementary_gaussians},
    we obtain the following inequalities,
    which hold for all $(u_1, u_2, y) \in \real^d \times \real^d \times \real^K$:
    \begin{align}
        \notag
        \Bigl\lvert \normal\bigl(\vect h(u_1), \mat \Gamma\bigr)(y) - \normal\bigl(\vect h(u_2), \mat \Gamma\bigr)(y) \Bigr\rvert
        &\leq \vecnorm{h(u_2) - h(u_1)}_{\mat \Gamma}  \int_{0}^{1} \vecnorm{h_s - y}_{\mat \Gamma} \exp \left( - \frac{1}{2} \vecnorm*{h_s - y}_{\mat \Gamma}^2 \right) \, \d s \\
        \label{eq:techincal_lemma_bound2}
        &\leq C \vecnorm{u_2 - u_1}  \exp \left( - \frac{1}{4}  \vecnorm{y}_{\mat \Gamma}^2 \right),
    \end{align}
    where we  used the Lipschitz continuity of $h$,
    together with~\eqref{eq:young} and the boundedness of~$\vect h$, in the last inequality.
    In order to conclude the proof,
    using the definition of~$\op P$,
    we calculate that
    \begin{align*}
        p(u_1, y) - p(u_2, y)
        &= \op P \mu(u_1) \, \normal\bigl(\vect h(u_1), \mat \Gamma\bigr) (y) - \op P \mu(u_2) \, \normal\bigl(\vect h(u_2), \mat \Gamma\bigr) (y) \\
        &= \bigl(\op P \mu(u_1) - \op P \mu(u_2)\bigr) \, \normal\bigl(\vect h(u_1), \mat \Gamma\bigr) (y) \\
        &\quad + \op P \mu(u_2) \Bigl( \normal\bigl(\vect h(u_1), \mat \Gamma\bigr) - \normal\bigl(\vect h(u_2), \mat \Gamma\bigr) (y) \Bigr).
    \end{align*}
    The first and second terms on the right hand side can be bounded
    by using~\eqref{eq:techincal_lemma_bound1} and~\eqref{eq:techincal_lemma_bound2}, respectively,
    leading to~\eqref{eq:lipschitz_density}.
\end{proof}

\section{Technical Results for Theorems \ref{theorem:main_theorem} and \ref{theorem:main_theorem_gaussian_projection}}
\label{appendix:A}

We show moment bounds in~\cref{sub:moment_bounds},
and we prove that $\op T_j\mu = \op B_j \mu$ for any Gaussian probability measure~$\mu$ in~\cref{sub:mean_field_is_conditioning_for_gaussians}.
Finally, we prove the stability results used in the proofs of~\cref{theorem:main_theorem,theorem:main_theorem_gaussian_projection} in~\cref{sub:stability_results,sub:additional_stability_results},
respectively.
\subsection{Moment Bounds}
\label{sub:moment_bounds}
\begin{lemma}
    [Moment bounds]
    \label{lemma:bound_on_Pmu}
    Let~$\mu$ denote a probability measure on~$\real^d$.
    Under~\cref{assumption:ensemble_kalman},
    it holds that
    \begin{equation}
        \label{eq:bound_on_Pmu}
        \bigl\lvert \mathcal M(\op P \mu) \bigr\rvert \leq \kappa_{\vect \Psi}, \qquad
        \mat \Sigma \preccurlyeq \mathcal C(\op P \mu) \preccurlyeq \kappa_{\vect \Psi}^2 \mat I_d + \mat \Sigma.
    \end{equation}
\end{lemma}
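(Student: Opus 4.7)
The plan is to exploit the probabilistic representation of $\op P$: if $U \sim \mu$ and $\xi \sim \normal(0, \mat \Sigma)$ are independent, then $\op P \mu$ is the law of $\vect \Psi(U) + \xi$. All the bounds will then follow from boundedness of $\vect \Psi$ and elementary computations with means and covariances, leveraging the independence of $U$ and $\xi$.

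For the mean bound, I would compute $\mathcal M(\op P \mu) = \expect[\vect \Psi(U) + \xi] = \expect[\vect \Psi(U)]$ since $\xi$ is centered, and then apply Jensen's inequality followed by the uniform bound $\|\vect \Psi\|_\infty \leq \kappa_{\vect \Psi}$ from \cref{assumption:ensemble_kalman} to conclude that $|\mathcal M(\op P \mu)| \leq \expect[|\vect \Psi(U)|] \leq \kappa_{\vect \Psi}$.

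For the covariance, independence of $U$ and $\xi$ gives the decomposition $\mathcal C(\op P \mu) = \mathcal C\bigl(\vect \Psi(U)\bigr) + \mat \Sigma$. The lower bound $\mat \Sigma \preccurlyeq \mathcal C(\op P \mu)$ is then immediate from positive semi-definiteness of $\mathcal C(\vect \Psi(U))$. For the upper bound, I would argue that $\mathcal C(\vect \Psi(U)) \preccurlyeq \expect[\vect \Psi(U) \otimes \vect \Psi(U)]$ (since subtracting the outer product of the mean only decreases the matrix in the Loewner order), and then for any $v \in \real^d$ bound
\[
    v^\t \expect[\vect \Psi(U) \otimes \vect \Psi(U)] v = \expect\bigl[(v \cdot \vect \Psi(U))^2\bigr] \leq |v|^2 \, \expect\bigl[|\vect \Psi(U)|^2\bigr] \leq \kappa_{\vect \Psi}^2 \, |v|^2
\]
by Cauchy--Schwarz and the uniform bound on $\vect \Psi$. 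This yields $\mathcal C(\vect \Psi(U)) \preccurlyeq \kappa_{\vect \Psi}^2 \mat I_d$, and adding $\mat \Sigma$ completes the upper bound.

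There is no real obstacle here: the argument is a direct consequence of the probabilistic interpretation of $\op P$ and the $L^\infty$ assumption on $\vect \Psi$. The only minor point requiring care is to ensure that moments are well-defined, which is automatic because $\vect \Psi$ is uniformly bounded and $\xi$ is Gaussian, so $\vect \Psi(U) + \xi$ has finite moments of all orders regardless of $\mu$.
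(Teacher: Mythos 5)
Your proposal is correct and takes essentially the same route as the paper: the paper computes $\mathcal M(\op P \mu) = \mu[\vect \Psi]$ via Fubini and proves the two covariance bounds separately --- the lower bound through the pointwise matrix inequality \eqref{eq:ubnd} and the upper bound by dropping the centering term --- which are precisely the two halves of your exact decomposition $\mathcal C(\op P \mu) = \mathcal C\bigl(\vect \Psi(U)\bigr) + \mat \Sigma$ obtained from independence of $U$ and $\xi$. Your probabilistic phrasing is marginally tidier but contains no genuinely new idea relative to the paper's integral-form computation.
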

\begin{proof}
From the definition of~$\op P$ in~\eqref{eq:Markov_kernel},
we have that
\begin{align*}
    \mathcal M(\op P \mu)
    = \int_{\real^d} u \, \op P \mu(u) \, \d u
    &= \frac{1}{\sqrt{(2\pi)^d \det \mat \Sigma}} \int_{\real^d} \int_{\real^d} u \exp \left( - \frac{1}{2} \vecnorm*{u - \vect \Psi(v)}_{\mat \Sigma}^2 \right) \, \mu(\d v) \, \d u \\
    &= \int_{\real^d} \vect \Psi(v) \, \mu(\d v),
\end{align*}
where the last equality is obtained by changing the order of integration using Fubini's theorem.
Using the first item in~\cref{assumption:ensemble_kalman},
we then deduce the first inequality in~\eqref{eq:bound_on_Pmu}.
For the second inequality in~\eqref{eq:bound_on_Pmu}, we first note the following inequality
which holds for any $m, v \in \real^d$ by~\cref{lemma:auxiliary_ineq_second_moment}:
\begin{align}
    \label{eq:ubnd}
    &\int_{\real^d} (u - \vect m) \otimes (u - \vect m) \exp \Bigl( - \frac{1}{2} \vecnorm*{u - \vect \Psi(v)}_{\mat \Sigma}^2 \Bigr) \, \d u \nonumber\\
    &\qquad \succcurlyeq
    \int_{\real^d} \bigl(u - \vect \Psi(v)\bigr) \otimes \bigl(u - \vect \Psi(v)\bigr) \exp \Bigl( - \frac{1}{2} \vecnorm*{u - \vect \Psi(v)}_{\mat \Sigma}^2 \Bigr) \, \d u;
\end{align}
The result follows by using the fact that $\Psi(v)$ is the mean under Gaussian $\mathcal{N}\bigl(\Psi(v), \mat \Sigma\bigr).$
Now choose $m$ to be the mean under measure $\op P\mu$ and note that, by conditioning on $v$
and using~\eqref{eq:ubnd},
\begin{align*}
    \mathcal C(\op P \mu)&=\int_{\real^d} (u-\vect m) \otimes (u - \vect m) \,  \op P \mu(u) \, \d u  \\
                         & = \frac{1}{\sqrt{(2\pi)^d \det \mat \Sigma}} \int_{\real^d} \left(\int_{\real^d} (u - \vect m) \otimes (u - \vect m) \exp \Bigl( - \frac{1}{2} \vecnorm*{u - \vect \Psi(v)}_{\mat \Sigma}^2 \Bigr) \, \d u \right) \, \mu(\d v) \\
                         & \succcurlyeq \frac{1}{\sqrt{(2\pi)^d \det \mat \Sigma}} \int_{\real^d} \left(\int_{\real^d} \bigl(u - \vect \Psi(v)\bigr) \otimes \bigl(u - \vect \Psi(v)\bigr) \exp \Bigl( - \frac{1}{2} \vecnorm*{u - \vect \Psi(v)}_{\mat \Sigma}^2 \Bigr) \, \d u\right)  \, \mu(\d v) \\
                         & = \int_{\real^d} \mat \Sigma \, \mu(\d v) = \mat \Sigma,
\end{align*}
and so~$\mathcal C(\op P \mu) \succcurlyeq \mat \Sigma$.
On the other hand,
using~\cref{lemma:auxiliary_ineq_second_moment} again together with the fact that,
by the Cauchy-Schwarz inequality, $\vect a \vect a^\t \preccurlyeq (\vect a^\t \vect a) \mat I_{d}$ for any vector $\vect a \in \real^d$,
we have
\begin{align}
    \nonumber
    \mathcal C(\op P \mu)
    &\preccurlyeq \int_{\real^d} u \otimes u \,  \op P \mu(u) \, \d u  \\
    \nonumber
    &= \frac{1}{\sqrt{(2\pi)^d \det \mat \Sigma}} \int_{\real^d} \int_{\real^d} u \otimes u \exp \left( - \frac{1}{2} \vecnorm*{u - \vect \Psi(v)}_{\mat \Sigma}^2 \right) \mu(\d v) \, \d u \\
    \label{eq:covariance_Pmu}
    &= \int_{\real^d} \bigl( \vect \Psi(v) \otimes \vect \Psi(v) + \mat \Sigma \bigr) \, \mu(\d v)
    \preccurlyeq \kappa_{\vect \Psi}^2 \mat I_d + \mat \Sigma,
\end{align}
which concludes the proof.
\end{proof}

It is possible, by using a similar reasoning,
to obtain bounds on the moments of~$\op Q \op P \mu$.

\begin{lemma}
    \label{lemma:bound_on_QPmu}
    Let~$\mu$ denote a probability measure on~$\real^d$.
    Under~\cref{assumption:ensemble_kalman},
    it holds that
    \begin{equation}
        \label{eq:bound_on_QPmu_mean}
        \bigl\lvert \mathcal M(\op Q \op P \mu) \bigr\rvert
        \leq \sqrt{\kappa_{\vect \Psi}^2 + \kappa_{\vect h}^2}, \qquad
    \end{equation}
    and
    \begin{equation}
        \label{eq:bound_on_QPmu_covariance}
        \min \left\{ \frac{\gamma\sigma}{2 \kappa_{\vect h}^2+\gamma} , \frac{\gamma}{2} \right\}
        \mat I_{d + K}
        \preccurlyeq \mathcal C(\op Q \op P \mu) \preccurlyeq
        \begin{pmatrix}
            2 \kappa_{\vect \Psi}^2 \mat I_d  + 2 \mat \Sigma & \mat 0_{d \times K} \\
            \mat 0_{K \times d} & 2 \kappa_{\vect h}^2 \mat I_K + \mat \Gamma
        \end{pmatrix}.
    \end{equation}
\end{lemma}
\begin{proof}
The inequality~\eqref{eq:bound_on_QPmu_mean} follows immediately from~\cref{assumption:ensemble_kalman} and the fact that
\begin{align}
    \mathcal M(\op Q \op P \mu) =
    \begin{pmatrix}
        \mathcal M(\op P \mu) \\
        \op P \mu [\vect h]
    \end{pmatrix}.
\end{align}
For inequality~\eqref{eq:bound_on_QPmu_covariance},
let $\phi\colon \real^d \to \real^{d + K}$ denote the map $\phi(u) = \bigl(u, \vect h(u)\bigr)$,
and let $\phi_{\sharp}$ denote the associated pushforward map on measures.
A calculation gives
\begin{equation}
    \label{eq:decomposition_covariance}
    \mathcal C(\op Q \op P \mu) =
    \mathcal C(\phi_{\sharp} \op P \mu) +
    \begin{pmatrix}
        \mat 0_{d\times d} & \mat 0_{d \times K} \\
        \mat 0_{K \times d} & \mat \Gamma
    \end{pmatrix}
    =
    \begin{pmatrix}
        \mathcal C^{uu}(\phi_{\sharp} \op P \mu) & \mathcal C^{uy}(\phi_{\sharp} \op P \mu) \\
        \mathcal C^{yu}(\phi_{\sharp} \op P \mu) & \mathcal C^{yy}(\phi_{\sharp} \op P \mu) + \mat \Gamma
    \end{pmatrix}.
\end{equation}
For any $(\vect a, \vect b) \in \real^{d} \times \real^K$,
it holds that
\[
    2
    \begin{pmatrix}
        \vect a \vect a^\t & \mat 0_{d \times K} \\
        \mat 0_{K \times d} & \vect b \vect b^\t
    \end{pmatrix}
    -
    \begin{pmatrix}
        \vect a \\
        \vect b
    \end{pmatrix}
    \otimes
    \begin{pmatrix}
        \vect a \\
        \vect b
    \end{pmatrix}
    =
    \begin{pmatrix}
        \vect a \\
        -\vect b
    \end{pmatrix}
    \otimes
    \begin{pmatrix}
        \vect a \\
        -\vect b
    \end{pmatrix}
    \succcurlyeq \mat 0_{(d+K) \times (d+K)}.
\]
Therefore, we obtain
\[
    \begin{pmatrix}
        \vect a \\
        \vect b
    \end{pmatrix}
    \otimes
    \begin{pmatrix}
        \vect a \\
        \vect b
    \end{pmatrix}
    \preccurlyeq
    2
    \begin{pmatrix}
        \vect a \vect a^\t & \mat 0_{d \times K} \\
        \mat 0_{K \times d} & \vect b \vect b^\t
    \end{pmatrix}
\]
which enables to deduce, using~\cref{lemma:bound_on_Pmu} and~\cref{assumption:ensemble_kalman},
that
\begin{align}
    \mathcal C(\phi_{\sharp} \op P \mu)
    \nonumber
    &\preccurlyeq \int_{\real^d} \begin{pmatrix} u \\ \vect h(u) \end{pmatrix} \otimes \begin{pmatrix} u \\ \vect h(u) \end{pmatrix} \, \op P \mu(u) \, \d u  \\
    \label{eq:bound_covariance_pushforward}
    &\preccurlyeq  2\int_{\real^d}
    \begin{pmatrix}
        u u^\t & \mat 0_{d \times K} \\
        \mat 0_{K \times d} & \vect h(u) \vect h(u)^\t
    \end{pmatrix} \, \op P \mu(u) \, \d u
    \preccurlyeq
    2
    \begin{pmatrix}
        \mathcal \kappa_{\vect \Psi}^2 \mat I_d + \mat \Sigma & \mat 0_{d \times K} \\
        \mat 0_{K \times d} & \kappa_{\vect h}^2 \mat I_K
    \end{pmatrix}.
\end{align}
where we used~\eqref{eq:covariance_Pmu} in the last inequality.
Combined with~\eqref{eq:decomposition_covariance},
this inequality leads to the upper bound~\eqref{eq:bound_on_QPmu_covariance}.
For the lower bound,
note that by the Cauchy--Schwarz inequality,
it holds for any probability measure~$\pi \in \mathcal P(\real^d \times \real^K)$
and all $(\vect a, \vect b) \in \real^d \times \real^K$
\begin{align*}
    \vecnorm*{\vect a^\t \mathcal C^{uy}(\pi) \vect b}
    &= \int_{\real^d \times \real^K} \left(\vect a^\t \bigl(u - \mathcal M^u(\pi)\bigr)\right) \left(\vect b^\t \bigl(y - \mathcal M^y(\pi)\bigr)\right) \pi(\d u \d y) \\
    &\leq \sqrt{\vect a^\t \mathcal C^{uu}(\pi) \vect a} \, \sqrt{\vect b^\t \mathcal C^{yy}(\pi) \vect b}.
\end{align*}
Therefore, by Young's inequality, it holds for all~$\varepsilon \in (0,1)$ and for all $(\vect a, \vect b) \in \real^d \times \real^K$, that
\begin{align*}
    \begin{pmatrix}
        \vect a \\ \vect b
    \end{pmatrix}^\t
    \mathcal C(\phi_{\sharp} \op P \mu)
    \begin{pmatrix}
        \vect a \\ \vect b
    \end{pmatrix}
    &\geq (1 - \varepsilon) \vect a^\t \mathcal C^{uu}(\phi_{\sharp} \op P \mu) \vect a - \left(\frac{1}{\varepsilon} - 1\right) \vect b^\t \mathcal C^{yy}(\phi_{\sharp} \op P \mu) \vect b \\
    &\geq (1 - \varepsilon) \vect a^\t \mat \Sigma \vect a - \left(\frac{1}{\varepsilon} - 1\right) \kappa_{\vect h}^2 \vecnorm{\vect b}^2,
\end{align*}
where we employed~\eqref{eq:bound_on_Pmu} and the bound~$\mathcal C^{yy}(\phi_{\sharp} \op P \mu) \preccurlyeq \kappa_{H}^2 \mat I_K$ in the last inequality.
Using~\eqref{eq:decomposition_covariance},
we deduce that
\begin{align}
    \nonumber
    \begin{pmatrix}
        \vect a \\ \vect b
    \end{pmatrix}^\t
    \mathcal C(\op Q \op P \mu)
    \begin{pmatrix}
        \vect a \\ \vect b
    \end{pmatrix}
    &\geq (1 - \varepsilon) \vect a^\t \mat \Sigma \vect a - \left(\frac{1}{\varepsilon} - 1\right) \kappa_{\vect h}^2 \vecnorm*{\vect b}^2 +  \vect b^\t \mat \Gamma \vect b \\
    \label{eq:intermediate_moment}
    &\geq (1 - \varepsilon) \sigma \vecnorm*{\vect a}^2  + \left(\gamma - \left(\frac{1}{\varepsilon} - 1\right) \kappa_{\vect h}^2 \right) \vecnorm{\vect b}^2.
\end{align}
Letting $\varepsilon$ be such that the coefficient of $\vecnorm{\vect b}^2$ is $\gamma/2$,
we finally obtain
\begin{equation}
    \label{eq:final_step_bound_covariance_QPmu}
    \begin{pmatrix}
        \vect a \\ \vect b
    \end{pmatrix}^\t
    \mathcal C(\op Q \op P \mu)
    \begin{pmatrix}
        \vect a \\ \vect b
    \end{pmatrix}
    \geq  \frac{\gamma\sigma}{2 \kappa_{\vect h}^2+\gamma}
    \vecnorm{\vect a}^2 + \frac{\gamma}{2} \vecnorm{\vect b}^2,
\end{equation}
which concludes the proof.
\end{proof}

\begin{remark}
    A bound sharper than~\eqref{eq:final_step_bound_covariance_QPmu} can be obtained by letting~$\varepsilon$ be such that the coefficients of~$\vecnorm{\vect a}^2$ and~$\vecnorm{\vect b}^2$ are equal in~\eqref{eq:intermediate_moment},
    but this is not necessary for our purposes.
\end{remark}


\begin{lemma}
    \label{lemma:moment_bound}
    For $\mu_1, \mu_2 \in \mathcal P(\real^n)$ with finite second moments,
    it holds that
    \begin{align*}
        \bigl\lvert \mathcal M(\mu_1) - \mathcal M(\mu_2) \bigr\rvert &\leq \frac{1}{2} d_g(\mu_1, \mu_2), \\
        \bigl\lVert \mathcal C(\mu_1) - \mathcal C(\mu_2) \bigr\rVert &\leq \left(1 + \frac{1}{2} \vecnorm{\mathcal M(\mu_1) + \mathcal M(\mu_2)}\right) \, d_g(\mu_1, \mu_2).
    \end{align*}
\end{lemma}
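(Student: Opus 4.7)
The plan is to derive both bounds directly from the definition of $d_g$ by exhibiting test functions bounded (pointwise and in absolute value) by $g(v) = 1 + \vecnorm{v}^2$. The only analytic input needed is the elementary inequality $2\vecnorm{v} \le 1 + \vecnorm{v}^2$ (from $(1-\vecnorm{v})^2 \ge 0$), which lets linear functionals of $v$ be absorbed into $g$.

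For the mean estimate, I would fix a unit vector $\vect e \in \real^n$ and take as a test function $f(v) = 2\,\vect e\cdot v$. Then $\abs{f(v)} = 2\abs{\vect e \cdot v} \le 2\vecnorm{v} \le g(v)$, so $\abs{\mu_1[f]-\mu_2[f]} \le d_g(\mu_1,\mu_2)$, which rearranges to $\abs{\vect e\cdot(\mathcal M(\mu_1)-\mathcal M(\mu_2))} \le \tfrac12 d_g(\mu_1,\mu_2)$. Taking the supremum over unit $\vect e$ yields the first bound.

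For the covariance estimate, I would use the identity $\mathcal C(\mu_i) = \mu_i[v\otimes v] - \mathcal M(\mu_i)\otimes \mathcal M(\mu_i)$ to split
\[
\mathcal C(\mu_1)-\mathcal C(\mu_2) = \bigl(\mu_1[v\otimes v]-\mu_2[v\otimes v]\bigr) - \bigl(\mathcal M(\mu_1)\otimes\mathcal M(\mu_1) - \mathcal M(\mu_2)\otimes\mathcal M(\mu_2)\bigr).
\]
For any unit vectors $\vect a, \vect b \in \real^n$, the scalar test function $f(v) = (\vect a\cdot v)(\vect b\cdot v)$ satisfies $\abs{f(v)} \le \vecnorm{v}^2 \le g(v)$, so the first bracket contributes at most $d_g(\mu_1,\mu_2)$ when tested against $\vect a\otimes \vect b$. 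For the second bracket, write $\vect m_i = \mathcal M(\mu_i)$ and apply the polarization identity
\[
(\vect a\cdot \vect m_1)(\vect b\cdot \vect m_1) - (\vect a\cdot \vect m_2)(\vect b\cdot \vect m_2) = \tfrac12\bigl[\vect a\cdot(\vect m_1-\vect m_2)\,\vect b\cdot(\vect m_1+\vect m_2) + \vect a\cdot(\vect m_1+\vect m_2)\,\vect b\cdot(\vect m_1-\vect m_2)\bigr],
\]
which together with Cauchy--Schwarz and the first bound yields a contribution of at most $\tfrac12\vecnorm{\vect m_1+\vect m_2}\,d_g(\mu_1,\mu_2)$. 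Summing the two contributions and taking the supremum over unit $\vect a, \vect b$ gives the stated operator-norm bound.

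There is no real obstacle here: the argument is a direct computation once the right test functions are chosen. The only subtlety worth double-checking is the factor $\tfrac12$ in the mean bound, which comes precisely from the sharp constant in $2\vecnorm{v}\le 1+\vecnorm{v}^2$, and its propagation into the $\tfrac12\vecnorm{\mathcal M(\mu_1)+\mathcal M(\mu_2)}$ term via the polarization identity.
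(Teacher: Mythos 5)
Your proposal is correct and takes essentially the same route as the paper's proof: both bound the mean discrepancy using the test function $2\,\vect a^\t v$ dominated by $g$, and both split the covariance difference via $\mathcal C(\mu_i)=\mu_i[v\otimes v]-\mathcal M(\mu_i)\otimes\mathcal M(\mu_i)$, controlling the second-moment part with a quadratic test function bounded by $\vecnorm{v}^2\leq g(v)$ and the mean part by an algebraic factorization combined with the first bound. The only cosmetic difference is that the paper exploits symmetry of the covariance difference to test with a single unit vector and the difference-of-squares identity $x^2-y^2=(x+y)(x-y)$, whereas you test with two unit vectors and the equivalent polarization identity; the constants come out identically.
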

\begin{proof}
    Let $\vect m_i = \mathcal M(\mu_i)$ and $\mat \Sigma_i = \mathcal C(\mu_i)$ for $i = 1, 2$.
    Notice that $|2a^\t u|\le g(u)$ if $|a|=1$, so
    \begin{align}
        \label{eq:bound_diff_means}
        \vecnorm{\vect m_1 - \vect m_2}
        = \sup_{\vecnorm{\vect a} = 1} \abs*{ \vect a^\t \bigl(\vect m_1 - \vect m_2\bigr) }
        = \sup_{\vecnorm{\vect a} = 1} \abs*{\mu_1\bigl[\vect a^\t u\bigr] - \mu_2\bigl[\vect a^\t u\bigr]}
        \leq \frac{1}{2} d_g(\mu_1, \mu_2),
    \end{align}
    where the supremum is over the unit sphere in $\real^n$,
    centered at the origin and in the Euclidean distance.
    Similarly
    \begin{align}
        \notag
        \norm*{\mat \Sigma_1 - \mat \Sigma_2}
        &= \sup_{\vecnorm{\vect a} = 1} \abs*{\vect a^\t \mat \Sigma_1 \vect a - \vect a^\t \mat \Sigma_2 \vect a} \\
        \notag
        &\leq \sup_{\vecnorm{\vect a} = 1}\left\{ \abs*{\mu_1\bigl[\lvert \vect a^\t u \rvert^2\bigr] - \mu_2\bigl[\lvert \vect a^\t u \rvert^2\bigr]}
        + \abs*{\mu_1\bigl[\vect a^\t u\bigr]^2 - \mu_2\bigl[\vect a^\t u\bigr]^2}\right\} \\
        \notag
        &\leq  d_g(\mu_1, \mu_2) +  \sup_{\vecnorm{\vect a} = 1}\left|\mu_1\bigl[\vect a^\t u\bigr] + \mu_2\bigl[\vect a^\t u\bigr]\right|\left|\mu_1\bigl[\vect a^\t u\bigr] - \mu_2\bigl[\vect a^\t u\bigr]\right|\\
        \label{eq:bound_diff_cov}
        &\leq \left(1 + \frac{1}{2} \vecnorm{\vect m_1 + \vect m_2}\right) \, d_g(\mu_1, \mu_2),
    \end{align}
    which concludes the proof.
\end{proof}


\subsection{Action of \texorpdfstring{$\op T_j$}{} on Gaussians}
\label{sub:mean_field_is_conditioning_for_gaussians}
\begin{lemma}
    [$\op B_j \op G = \op T_j \op G$]
    \label{lemma:mean_field_map}
    Fix $y^\dagger_{j+1}\in\real^K$. Let $\pi$ be a Gaussian measure over $\real^d \times \real^K$ with mean and covariance given by
    \[
        \vect m =
        \begin{pmatrix}
            \vect m_u \\
            \vect m_y
        \end{pmatrix},
        \qquad
        \mat S =
        \begin{pmatrix}
            \mat S_{uu} & \mat S_{uy} \\
            \mat S_{uy}^\t & \mat S_{yy}
        \end{pmatrix}.
    \]
        Recall the pushforward of $\mathscr T$ defined in~\eqref{eq:Tmap}.
        Then the probability measure $\op B_j \pi$ defined in~\eqref{eq:definition_B} coincides with the probability measure $\op T_j\pi =\mathscr T (\placeholder, \placeholder;\pi, y^\dagger_{j+1})_\sharp \pi$.
\end{lemma}
\begin{proof}
    For conciseness, we denote $y^\dagger = y^\dagger_{j+1}$.
    Using the well known formula for the conditional distribution of a normal random variable,
    we have that
    \begin{equation}
        \label{eq:conditional_distribution}
        \op B_j \pi =
        \normal \left( \vect m_u + S_{uy} S_{yy}^{-1} (y^\dagger - \vect m_y), S_{uu} -  S_{uy} S_{yy}^{-1} S_{uy}^\t\right).
    \end{equation}
    Since $\op T_j$ is the pushforward under an affine map,
    it maps Gaussian distributions in $\mathcal P(\real^d \times \real^K)$ to Gaussian distributions in $\mathcal P(\real^d)$,
    and so is sufficient to check that the first and second moments of~$\op B_j \pi$ and~$\op T_j \pi$ coincide.
    By definition of~$\op T_j$ in~\eqref{eq:Tmap},
    we have that $U + S_{uy} S_{yy}^{-1} (y^\dagger - Y) \sim \op T_j \pi$ if~$(U, Y) \sim \pi$.
    It follows immediately that the mean under $\op T_j \pi$ coincides with that under the conditional distribution~\eqref{eq:conditional_distribution}.
    Employing the expression for the mean under $\op T_j \pi$,
    we then obtain that the covariance under $\op T_j \pi$ is given by
    \[
        \mathcal C (\op T_j \pi) = \expect_{(U, Y) \sim \pi} \biggl[ \Bigl(U - \vect m_u + S_{uy} S_{yy}^{-1} \bigl(\vect m_y - Y\bigr)\Bigr) \otimes \Bigl(U - \vect m_u + S_{uy} S_{yy}^{-1} \bigl(\vect m_y - Y\bigr)\Bigr) \biggr].
    \]
    Developing this expression, we obtain
    \begin{align*}
        \mathcal C (\op T_j \pi)
        &= \expect_{(U, Y) \sim \pi}
        \biggl[ (U - \vect m_u) (U - \vect m_u)^\t
            + (U - \vect m_u) \bigl(\vect m_y - Y\bigr)^\t S_{yy}^{-1}  S_{uy}^\t \\
        &\quad \qquad \qquad + S_{uy} S_{yy}^{-1} \bigl(\vect m_y - Y\bigr)  (U - \vect m_u)^\t
        + S_{uy} S_{yy}^{-1} \bigl(\vect m_y - Y\bigr)  \bigl(\vect m_y - Y\bigr)^\t S_{yy}^{-1}  S_{uy}^\t \biggr]\\
        &= S_{uu}  - S_{uy} S_{yy}^{-1} S_{uy}^\t - S_{uy} S_{yy}^{-1} S_{uy}^\t + S_{uy} S_{yy}^{-1} S_{uy}^\t
        = S_{uu} - S_{uy} S_{yy}^{-1} S_{uy}^\t,
    \end{align*}
    which indeed coincides with the covariance of $\op B_j \pi$ in~\eqref{eq:conditional_distribution}.
\end{proof}

\begin{lemma}
    \label{lem:newform}
    The maps\eqref{eq:gaussian_projection_filter} and \eqref{eq:gaussian_projection_filter2} are equivalent.
\end{lemma}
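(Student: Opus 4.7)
The plan is to prove, for any $\pi \in \mathcal{P}(\real^d \times \real^K)$, the commutation identity
\[
    \op B_j \op G \pi = \op G \op T_j \pi,
\]
from which the equivalence of \eqref{eq:gaussian_projection_filter} and \eqref{eq:gaussian_projection_filter2} follows by applying this identity iteratively with $\pi = \op Q \op P \mu_j^G$.

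First I would rewrite the left-hand side using \cref{lemma:mean_field_map}: since $\op G \pi$ is Gaussian, $\op B_j \op G \pi = \op T_j \op G \pi$. So the task reduces to showing that $\op T_j \op G \pi = \op G \op T_j \pi$, i.e.\ that $\op T_j$ commutes with $\op G$ when applied in this order.

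The key structural observation I would then exploit is that, by definition~\eqref{eq:mean_field_map}, the affine map $\mathscr T(\placeholder, \placeholder; \pi, y^\dagger_{j+1})$ depends on $\pi$ only through the covariance blocks $\mathcal C^{uy}(\pi)$ and $\mathcal C^{yy}(\pi)$. Because the Gaussian projection operator is defined so as to preserve mean and covariance, we have $\mathcal C^{uy}(\op G \pi) = \mathcal C^{uy}(\pi)$ and $\mathcal C^{yy}(\op G \pi) = \mathcal C^{yy}(\pi)$. Consequently, writing $T := \mathscr T(\placeholder, \placeholder; \pi, y^\dagger_{j+1}) = \mathscr T(\placeholder, \placeholder; \op G \pi, y^\dagger_{j+1})$, both $\op T_j \pi$ and $\op T_j \op G \pi$ are pushforwards by the same affine map~$T$, applied respectively to $\pi$ and to $\op G \pi$.

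To conclude, I would make two routine observations. First, since $T$ is affine and $\op G \pi$ is Gaussian, the pushforward $T_\sharp \op G \pi = \op T_j \op G \pi$ is itself Gaussian. Second, a short direct computation shows that if $\nu$ has mean $\vect m$ and covariance $\mat S$ then $T_\sharp \nu$ has mean and covariance depending only on $\vect m$ and $\mat S$ (and on the fixed affine coefficients of $T$); since $\pi$ and $\op G \pi$ share their first two moments, $T_\sharp \pi$ and $T_\sharp \op G \pi$ share their mean and covariance as well. By uniqueness of the Gaussian with prescribed mean and covariance, the Gaussian measure $T_\sharp \op G \pi$ coincides with $\op G T_\sharp \pi = \op G \op T_j \pi$, which yields the desired identity.

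There is no serious obstacle here: the argument is essentially bookkeeping, with the only substantive input being the previously established \cref{lemma:mean_field_map} together with the fact that the affine transport $\mathscr T$ is determined by second moments alone. The mildly delicate point is keeping straight that $\op G$ on the left uses the moments of $\pi$ while the affine map defining $\op T_j$ also uses those same moments, which is precisely what makes the two operations commute in this order.
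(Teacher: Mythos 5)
Your proposal is correct and takes essentially the same route as the paper: reduce via \cref{lemma:mean_field_map} to the commutation $\op T_j \op G = \op G \op T_j$, which holds because both $\op G$ and the affine transport $\mathscr T(\placeholder,\placeholder;\pi,y^\dagger_{j+1})$ are determined by first and second moment information alone. Your write-up simply makes explicit the bookkeeping (the affine image of a Gaussian is Gaussian, and the mean and covariance of the pushforward depend only on those of the source measure) that the paper's proof leaves implicit.
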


\begin{proof}
    The equivalence follows from~\cref{lemma:mean_field_map} and the operator equality $\op T_j\op G=\op G\op T_j$,
    with both sides viewed as operators from $\mathcal P(\real^d \times \real^K)$ to $\mathcal P(\real^d)$.
    Since $\op T_j$ maps Gaussians to Gaussians,
    the image of both operators is contained in the set $\mathcal G(\real^d)$ of Gaussian distributions.
    It is therefore sufficient to check that for any~$\pi \in \mathcal P(\real^d \times \real^K)$,
    the probability measures $\op T_j\op G \pi$ and $\op G\op T_j \pi$ have the same first and second moments.
    We saw in the proof of \cref{lemma:mean_field_map} that the first and second moments of $\op T_j p$,
    for any~$p \in \mathcal P(\real^d \times \real^K)$, depend only on the first and second moments of $p$.
    Therefore the first and second moments of $\op T_j \pi$ and $\op T_j \op G \pi$ coincide,
    since the operator $\op G$ leaves the first and second moments invariant,
    and the conclusion follows.
\end{proof}

\subsection{Stability Results}
\label{sub:stability_results}

\begin{lemma}
    [The map $\op P$ is globally Lischitz]
    \label{lemma:lipschitz_p}
    Under~\cref{assumption:ensemble_kalman},
    it holds for all $\mu \in \mathcal P(\real^d)$ that
    \[
        d_g(\op P \mu, \op P \nu)
        \leq \Bigl(1+\kappa_{\vect \Psi}^2 + \trace(\mat \Sigma)\Bigr) \, d_g (\mu, \nu).
    \]
\end{lemma}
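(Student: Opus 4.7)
The plan is to exploit the duality description of $d_g$ and transfer the test function across $\op P$ via the adjoint action, so that what needs to be controlled is a pointwise bound on the transformed test function rather than a direct comparison of the densities $\op P\mu$ and $\op P\nu$.

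More concretely, I would first note that by Fubini, for any measurable $f\colon \real^d \to \real$ with $\abs{f}\leq g$ one has
\[
    (\op P \mu)[f] - (\op P \nu)[f] = \mu[\op P^* f] - \nu[\op P^* f],
    \qquad
    \op P^* f(v) := \frac{1}{\sqrt{(2\pi)^d \det \mat \Sigma}} \int_{\real^d} f(u)\, \exp\!\Bigl(-\tfrac{1}{2}\vecnorm{u-\vect\Psi(v)}_{\mat\Sigma}^2\Bigr)\,\d u,
\]
i.e.\ $\op P^* f(v) = \expect_{U \sim \normal(\vect\Psi(v),\mat\Sigma)}[f(U)]$. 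Thus if I can prove the pointwise bound $\lvert \op P^* f(v)\rvert \leq C\, g(v)$ with $C = 1 + \kappa_{\vect\Psi}^2 + \trace(\mat\Sigma)$, then $\op P^* f / C$ is itself a legal test function in the supremum defining $d_g(\mu,\nu)$, and taking the supremum over $|f|\leq g$ delivers the claimed Lipschitz estimate.

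The pointwise bound is an easy Gaussian moment computation. Since $|f|\leq g$, I get $\lvert \op P^* f(v)\rvert \leq \op P^* g(v) = 1 + \expect_{U \sim \normal(\vect\Psi(v),\mat\Sigma)}\bigl[\vecnorm{U}^2\bigr] = 1 + \vecnorm{\vect\Psi(v)}^2 + \trace(\mat\Sigma)$. Using the uniform bound $\norm{\vect\Psi}_\infty \leq \kappa_{\vect\Psi}$ from \cref{assumption:ensemble_kalman}, this is bounded by the constant $C$; since $g(v)\geq 1$, it is also bounded by $C\, g(v)$, which is what I need.

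There is essentially no obstacle here: the whole argument is a one-line duality plus a Gaussian second-moment computation, and the asymmetry between the input (a probability measure on $\real^d$) and the output (again on $\real^d$, but convolved with a Gaussian whose mean is constrained by $\kappa_{\vect\Psi}$) means the bound is in fact uniform in $v$. The only minor care is to note that the stated constant need not be multiplied by $g(v)$ — the uniform bound $C$ already suffices — but writing it as $C\,g(v)$ is what slots directly into the definition of $d_g$ and yields the Lipschitz statement as displayed.
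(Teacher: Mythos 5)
Your proposal is correct and is essentially the paper's own proof: both apply Fubini to transfer the test function through the kernel (your $\op P^* f$ is the paper's function $b$), bound it via the Gaussian second moment $1+\vecnorm{\vect\Psi(v)}^2+\trace(\mat\Sigma)$ and the uniform bound $\kappa_{\vect\Psi}$, and conclude from the duality definition of $d_g$. The only difference is presentational — you make explicit that $\op P^* f/C$ is an admissible test function, which the paper leaves implicit.
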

\begin{proof}
    By definition~\eqref{eq:Markov_kernel} of~$\op P$,
    it holds that
    \[
        \op P \mu(u) = \int_{\real^d} \frac{\exp \left( - \frac{1}{2} \vecnorm*{u - \vect \Psi(v)}_{\mat \Sigma}^2 \right)}{\sqrt{(2\pi)^d \det \mat \Sigma}} \, \mu(\d v)
        =: \int_{\real^d} p(v, u)  \, \mu(\d v).
    \]
    Take any $f\colon \real^d \to \real$ such that $f \leq g$, $g(u)=1+|u|^2$.
    Since $\vect \Psi$ is bounded by assumption,
    \begin{align*}
        b(v) := \int_{\real^d} f(u)  p(v, u) \, \d u
        &\leq \int_{\real^d} g(u)  p(v, u) \, \d u  \\
        &= 1+\left\lvert \vect \Psi(v) \right\rvert^2  + \trace(\mat \Sigma)
        \leq 1+\kappa_{\vect \Psi}^2 + \trace(\mat \Sigma).
    \end{align*}
    Therefore,
    using Fubini's theorem,
    we have
    \begin{align*}
        \Bigl\lvert \op P \mu [f] - \op P \nu [f] \Bigr\rvert
        &= \biggl\lvert \int_{\real^d} \left( \int_{\real^d} f(u)  p(v, u) \, \d u \right)
        \bigl(\mu(\d v) - \nu(\d v)\bigr)  \biggr\rvert \\
        &= \Bigl\lvert \mu [b] - \nu [b] \Bigr\rvert
        \leq \Bigl(1+\kappa_{\vect \Psi}^2 + \trace(\mat \Sigma)\Bigr) d_g(\mu, \nu),
    \end{align*}
    which concludes the proof.
\end{proof}

\begin{lemma}
    [The map $\op Q$ is globally Lipschitz]
    \label{lemma:lipschitz_Q}
    Under~\cref{assumption:ensemble_kalman},
    it holds for any $\mu, \nu \in \mathcal P(\real^d)$ that
    \begin{equation}
        \label{eq:lipschitz_Q}
        d_{g}(\op Q \mu, \op Q \nu)
        \leq \Bigl(1 + \kappa_{\vect h}^2 + \trace(\mat \Gamma) \Bigr) d_{g}(\mu, \nu).
    \end{equation}
\end{lemma}
\begin{proof}
    Let us take $f \colon \real^d \times \real^K \to \real$ such that $f \leq g$ and introduce the operator~$\Pi$ given by
    \[
        \Pi f(u) = \int_{\real^K} f(u, y) \, \normal\bigl(\vect h(u), \mat \Gamma\bigr) (\d y).
    \]
    Clearly $\Pi f(u) \leq \Pi g(u)$,
    and it holds that
    \begin{align*}
        \Pi g(u)
        &= \int_{\real^K} \left(1 + \vecnorm{u}^2 + \vecnorm{y}^2\right)  \, \normal\bigl(\vect h(u), \mat \Gamma\bigr) (\d y)
        = 1 + \vecnorm{u}^2 + \int_{\real^K} \vecnorm{y}^2 \normal\bigl(\vect h(u), \mat \Gamma\bigr) (\d y) \\
        &= 1 + \vecnorm{u}^2 + \bigl\lvert \vect h(u) \bigr\rvert^2 + \trace(\mat \Gamma)
        \leq \Bigl(1 + \kappa_{\vect h}^2 + \trace(\mat \Gamma) \Bigr) \left(1 + \vecnorm{u}^2\right).
    \end{align*}
    Therefore
    \[
        \Bigl\lvert \op Q \mu[f] - \op Q \nu[f] \Bigr\rvert
        = \Bigl\lvert \mu[\Pi f] - \nu[\Pi f] \Bigr\rvert
        \leq \Bigl(1 + \kappa_{\vect h}^2 + \trace(\mat \Gamma) \Bigr) d_{g}(\mu, \nu),
    \]
    and we obtain~\eqref{eq:lipschitz_Q}.
\end{proof}

\begin{lemma}
    \label{lemma:missing_lemma}
    Under~\cref{assumption:ensemble_kalman},
    there exists $C_{\op B} = C_{\op B}(\kappa_y, \kappa_{\vect \Psi}, \kappa_{\vect h}, \mat \Sigma, \mat \Gamma)$ such that
    for any probability measure $\mu \in \mathcal P(\real^d)$,
    it holds that
    \[
        \forall j \in \llbracket 0, J \rrbracket,
        \qquad
        d_{g}(\op B_j \op G \op Q \op P \mu, \op B_j \op Q \op P \mu)
        \leq C_{\op B} d_{g}(\op G \op Q \op P \mu, \op Q \op P \mu).
    \]
\end{lemma}
\begin{proof}
    For conciseness, we denote $y^\dagger = y^\dagger_{j+1}$.
    Let us introduce the $y$-marginal densities
    \[
        \alpha_\mu(y):= \int_{\real^d} \op G \op Q \op P\mu(u, y) \, \d u\,,
        \qquad
        \beta_\mu(y):=\int_{\real^d} \op Q \op P\mu(u, y) \, \d u\,.
    \]
    Then
    \begin{align}
        \notag
        &d_{g}(\op B_j \op G \op Q \op P \mu, \op B_j \op Q \op P \mu)
        = \int_{\real^d} \left(1+|u|^2\right) \left| \frac{\op G \op Q \op P\mu(u,y^\dagger)}{\alpha_\mu(y^\dagger)} - \frac{\op Q \op P\mu(u,y^\dagger)}{\beta_\mu(y^\dagger)}\right| \, \d u\\
        \notag
        &\qquad \qquad \le \frac{1}{\alpha_\mu(y^\dagger)}\int_{\real^d} \left(1+|u|^2\right) \bigl\lvert \op G \op Q \op P\mu(u,y^\dagger)-\op Q \op P\mu(u,y^\dagger)\bigr\rvert \, \d u \\
        \label{eq:main_equation}
        &\qquad \qquad \qquad + \left| \frac{\alpha_\mu(y^\dagger)-\beta_\mu(y^\dagger)}{\alpha_\mu(y^\dagger) \beta_\mu(y^\dagger)}\right|\,\int_{\real^d} \left(1+|u|^2\right) \op Q \op P\mu(u,y^\dagger) \, \d u.
    \end{align}

    \paragraph{Step 1: bounding $\alpha_{\mu}(y^\dagger)$ and $\beta_{\mu}(y^\dagger)$ from below}
    The marginal distribution $\alpha_{\mu}(\placeholder)$ is Gaussian with covariance matrix
    \begin{equation}
        \label{eq:covariance_matrix_second_gaussian}
        \mat \Gamma + \op P \mu\bigl[\vect h(\placeholder) \otimes \vect h(\placeholder)\bigr] - \op P \mu\bigl[\vect h(\placeholder)\bigr] \otimes \op P \mu\bigl[\vect h(\placeholder)\bigr],
    \end{equation}
    which is bounded from below by $\mat \Gamma$ and from above by $\mat \Gamma + \kappa_{\vect h}^2 \mat I_{K}$,
    in view of \cref{assumption:ensemble_kalman}.
    (We again use the fact that $\vect a \vect a^\t \preccurlyeq (\vect a^\t \vect a) \mat I_{d}$ for any vector $\vect a \in \real^d$, by the Cauchy-Schwarz inequality.)
    Assumption~\ref{assumption:ensemble_kalman} also implies that the mean of $\alpha_{\mu}$ is bounded from above in norm by~$\kappa_{\vect h}$. Therefore, it holds that
    \begin{equation}
        \label{eq:lower_bound_alpha}
        \forall y \in \real^K, \qquad
        \alpha_\mu(y)
        \geq \frac{\exp\Bigl(- \frac{1}{2}  (|y| + \kappa_{\vect h})^2  \norm*{\mat \Gamma^{-1}}\Bigr)}{\sqrt{(2\pi)^K \det (\mat \Gamma + \kappa_{\vect h}^2 \mat I_K)}}.
    \end{equation}
    The function $\beta_{\mu}$ can be bounded from below independently of $\mu$ in a similar manner.
    Indeed, it holds under \cref{assumption:ensemble_kalman} that for all $y \in \real^K$,
    \begin{align}
        \nonumber
        \beta_{\mu}(y)
        &= \int_{\real^d} \op Q \op P \mu(u, y) \, \d u
        = \int_{\real^d} \frac{\exp\left(- \frac{1}{2} \bigl(y - \vect h(u)\bigr)^\t \mat \Gamma^{-1}  \bigl(y - \vect h(u)\bigr) \right)}{\sqrt{(2\pi)^K \det(\mat\Gamma)}} \op P\mu(u) \, \d u \\
        \label{eq:lower_bound_beta}
        &\geq \frac{\exp\Bigl(- \frac{1}{2}  (|y| + \kappa_{\vect h})^2  \norm*{\mat \Gamma^{-1}}\Bigr)}{\sqrt{(2\pi)^K \det (\mat \Gamma)}}.
    \end{align}

    \paragraph{Step 2: bounding the first term in~\eqref{eq:main_equation}}
    For fixed $u \in \real^d$, the functions $y \mapsto \op Q \op P \mu(u, y)$ and~$y \mapsto \op G \op Q \op P \mu(u, y)$ are Gaussians up to constant factors.
    The covariance matrix of the former is~$\mat \Gamma$ and,
    using the formula for the covariance of the conditional distribution of a Gaussian,
    we calculate that the covariance of the latter is given by
    \begin{equation}
        \label{eq:conditioned_covariance}
        \mathcal C^{yy}(\op Q \op P \mu) - \mathcal C^{yu} (\op Q \op P \mu) \mathcal C^{uu}(\op Q \op P \mu)^{-1}\mathcal C^{uy}(\op Q \op P \mu)
    \end{equation}
    Since $\mathcal C^{yu} (\op Q \op P \mu) \mathcal C^{uu}(\op Q \op P \mu)^{-1}\mathcal C^{uy}(\op Q \op P \mu)$ is positive semi-definite,
    it follows from~\cref{lemma:bound_on_QPmu} that the matrix~\eqref{eq:conditioned_covariance} is bounded from above by~$2\kappa_{\vect h}^2 \mat I_K + \mat \Gamma$.
    Then, using the same notation as in~\eqref{eq:decomposition_covariance},
    we obtain that
    \begin{multline}
        \mathcal C^{yy}(\op Q \op P \mu) - \mathcal C^{yu} (\op Q \op P \mu) \mathcal C^{uu}(\op Q \op P \mu)^{-1}\mathcal C^{uy}(\op Q \op P \mu)
         \\ =
        \mat \Gamma + \Bigl( \mathcal C^{yy}(\phi_\sharp \op P  \mu) - \mathcal C^{yu} (\phi_\sharp \op P  \mu) \mathcal C^{uu}(\phi_\sharp \op P  \mu)^{-1}\mathcal C^{uy}(\phi_\sharp \op P  \mu) \Bigr)
        \succcurlyeq \mat \Gamma,
    \end{multline}
    where the inequality holds because,
    being the Schur complement of the block $\mathcal C^{uu} (\phi_\sharp \op P  \mu)$ of the matrix $\mathcal C(\phi_\sharp \op P  \mu)$,
    the bracketed term is positive semi-definite.
    Therefore, the matrix~\eqref{eq:conditioned_covariance} is bounded from below by $\mat \Gamma$,
    and so the integral in the first term of~\eqref{eq:main_equation} may be bounded from above by using~\cref{lemma:gaussians_relation_1_and_inf_norms} in~\cref{app:A2} with parameter $\alpha=\alpha(\kappa_{\vect h}, \mat \Gamma)$,
    which gives
    \begin{align}
        \notag
        &\int_{\real^d} \left(1+|u|^2\right) \left| \op G \op Q \op P\mu(u,y)-\op Q \op P\mu(u,y)\right|\d u \\
        \notag
        &\qquad \leq C \int_{\real^K}  \int_{\real^d} \left(1+|u|^2\right) \bigl\lvert \op G \op Q \op P\mu(u,y)-\op Q \op P\mu(u,y)\bigr\rvert \, \d u \, \d y\\
        \notag
        &\qquad \leq C \int_{\real^K}  \int_{\real^d} \left(1+|u|^2+|y|^2\right) \bigl\lvert \op G \op Q \op P\mu(u,y)-\op Q \op P\mu(u,y)\bigr\rvert \, \d u \, \d y\\
        \label{eq:bound_first_term}
        &\qquad = C d_{g} (\op G \op Q \op P\mu, \op Q \op P \mu).
    \end{align}
    for an appropriate constant $C$ depending only on $\mat \Gamma$ and~$\kappa_{\vect h}$.

    \paragraph{Step 3: bounding the second term in~\eqref{eq:main_equation}}
    By~\eqref{eq:bound_first_term}, we have
    \begin{align*}
        \bigl\lvert \alpha_\mu(y)-\beta_\mu(y) \bigr\rvert& = \left| \int_{\real^d} \op G \op Q \op P\mu(u, y) - \op Q \op P\mu(u, y) \, \d u\right| \\
                                &\leq \int_{\real^d} \left(1+|u|^2\right) \bigl\lvert  \op G \op Q \op P\mu(u,y)-\op Q \op P\mu(u,y)\bigr \rvert\,\d u
         \leq C d_{g} (\op G \op Q \op P\mu, \op Q \op P \mu).
    \end{align*}
    On the other hand,
    since $y \mapsto \op Q \op P \mu(u, y) / \op P \mu(u)$ is a Gaussian density with covariance $\mat \Gamma$,
    which is bounded uniformly from above by $\bigl((2\pi)^K \det (\mat \Gamma)\bigr)^{-1/2}$,
    it holds that
    \begin{align*}
        \int_{\real^d} \left(1+|u|^2\right) \op Q \op P\mu(u,y) \, \d u
        &= \beta_{\mu}(y) + \int_{\real^d} |u|^2 \op Q \op P\mu(u,y) \, \d u \\
        &\leq \beta_{\mu}(y) + \int_{\real^d}  \frac{|u|^2 \op P \mu(u)}{\sqrt{(2\pi)^K \det(\Gamma})} \, \d u \\
        &= \beta_{\mu}(y) + \frac{\trace\bigl(\mathcal C(\op P\mu)\bigr) + \vecnorm*{\mathcal M(\op P\mu)}^2}{\sqrt{(2\pi)^K \det (\mat \Gamma})}.
    \end{align*}
    By \cref{lemma:bound_on_Pmu},
    the first and second moments of $\op P \mu$ are bounded from above by a constant depending only on~$\kappa_{\vect \Psi}$ and $\mat\Sigma$.

    \paragraph{Step 4: concluding the proof}
    Combining the above inequalities, we deduce that
    \begin{align*}
        &d_{g}(\op B_j \op G \op Q \op P \mu, \op B_j \op Q \op P \mu)
        \le \frac{C(\kappa_{\vect \Psi}, \kappa_{\vect h}, \mat \Sigma, \mat \Gamma)}{\alpha_{\mu}(y^\dagger_{j+1})} \left(1 + \frac{1}{\beta_\mu(y^\dagger_{j+1})} \right)  d_{g} (\op G \op Q \op P\mu, \op Q \op P \mu).
    \end{align*}
    Using~\eqref{eq:lower_bound_alpha} and~\eqref{eq:lower_bound_beta} together with the uniform bound $\kappa_y$ on the data (\cref{assumption:ensemble_kalman}) gives the conclusion.
\end{proof}

To conclude this section,
we show a stability result for $\op T_j$.
\begin{lemma}
    [Stability result for the mean field map $\op T_j$]
    \label{lemma:lipschitz_mean_field_affine}
    Suppose that \cref{assumption:ensemble_kalman} is satisfied and that $|h|_{C^{0,1}} \leq \ell_{\vect h} < \infty$.
    Then, for all $R \geq 1$, there is $L_{\op T} = L_{\op T}(R, \kappa_y, \kappa_{\vect \Psi}, \kappa_{\vect h}, \ell_{\vect h}, \mat \Sigma, \mat \Gamma)$ such that for all $\pi \in \mathcal P_R(\real^d \times \real^K)$ and~$\mu \in \mathcal P(\real^d)$,
    it holds that
    \[
        \forall j \in \range{1}{J}, \qquad
        d_{g}(\op T_j \pi, \op T_j p)
        \leq L_{\op T} \, d_{g}(\pi, p),
        \qquad p := \op Q \op P \mu.
    \]
\end{lemma}

\begin{proof}
    By \cref{lemma:bound_on_QPmu},
    the probability measure $\op Q \op P \mu$ belongs to $\mathcal P_{\widetilde R}(\real^d \times \real^K)$ for some $\widetilde R \ge 1$.
    Let us introduce
    \[
        r = \max \Bigl\{R, \widetilde R, \kappa_y \Bigr\}
    \]
    and denote by $\mathscr T^{\pi}$ and $\mathscr T^{p}$ the affine maps corresponding to evaluation
    of covariance information at $\pi$ and $p= \op Q \op P \mu$. Specifically,
    \begin{alignat*}{2}
        \mathscr T^{\pi}(u, y) &= u + \mat A_{\pi} (y^\dagger - y), \qquad & \mat A_{\pi} := \mat K_{uy} \mat K_{yy}^{-1}, \\
        \mathscr T^{p}(u, y) &= u + \mat A_{p} (y^\dagger - y), \qquad & \mat A_{p} := \mat S_{uy} \mat S_{yy}^{-1},
    \end{alignat*}
    where $\mat K = \mathcal C(\pi)$, $\mat S = \mathcal C(p)$ and $y^\dagger = y^\dagger_{j+1}$.
    By the triangle inequality, we have
    \begin{equation}
        \label{eq:main_equation_filtering}
        d_{g}(\op T_j \pi, \op T_j p)
        \leq  d_{g}(\mathscr T^{\pi}_{\sharp} \pi, \mathscr T^{\pi}_{\sharp} p) + d_{g} (\mathscr T^{\pi}_{\sharp} p, \mathscr T^{p}_{\sharp} p).
    \end{equation}
    Before separately bounding each term on the right-hand side,
    we obtain simple inequalities that will be helpful later in the proof.
    It holds that
    \begin{equation}
        \label{eq:lipschitz_mean_field_auxiliary_1}
        \matnorm{\mat A_{\pi}} \leq \matnorm{\mat K_{uy}} \matnorm{\mat K_{yy}^{-1}}
        \leq \matnorm{\mat K} \matnorm{\mat K^{-1}} \leq r^4,
    \end{equation}
    and similarly for~$\mat A_p$.
    Here we used that the matrix norm (induced by the Euclidean vector norm)
    of any submatrix is bounded from above by the norm of the original matrix.
    Using this bound again, we obtain, assuming $r>1$ without any loss of generality,
    \begin{align}
        \nonumber
        \matnorm{\mat A_{\pi} - \mat A_p} = \matnorm{\mat K_{uy} \mat K_{yy}^{-1} - \mat S_{uy} \mat S_{yy}^{-1}}
        &\leq \matnorm{(\mat K_{uy} - \mat S_{uy}) \mat K_{yy}^{-1}} + \matnorm{\mat S_{uy} \left(\mat K_{yy}^{-1} - \mat S_{yy}^{-1}\right)} \\
        \nonumber
        &\leq r^2 \Bigl( \matnorm{\mat K_{uy} - \mat S_{uy}} + \matnorm{\mat K_{yy}^{-1} - \mat S_{yy}^{-1}} \Bigr)\\
        \nonumber
        &= r^2 \Bigl( \matnorm{\mat K_{uy} - \mat S_{uy}} + \matnorm{\mat K_{yy}^{-1} (\mat S_{yy} - \mat K_{yy}) \mat S_{yy}^{-1}} \Bigr) \\
        \nonumber
        &\leq r^2 \Bigl( \matnorm{\mat K_{uy} - \mat S_{uy}} + \matnorm{\mat K_{yy}^{-1}} \matnorm{\mat S_{yy} - \mat K_{yy}} \matnorm{\mat S_{yy}^{-1}} \Bigr) \\
        \label{eq:lipschitz_mean_field_auxiliary_2}
        &\leq 2r^6  \matnorm{\mat K - \mat S} \leq 2r^6 (1 + 2r) \, d_{g}(\pi, p),
    \end{align}
    where we used in the last line the inequality~\eqref{eq:bound_diff_cov} from~\cref{lemma:moment_bound}.

    \paragraph{Bounding the first term in~\eqref{eq:main_equation_filtering}}
    Let $f\colon \real^d \to \real$ denote a function satisfying $\abs{f} \leq g$.
    It follows from the definition of the pushforward measure that
    \[
        \bigl\lvert \mathscr T^{\pi}_{\sharp}\pi[f] - \mathscr T^{\pi}_{\sharp} p[f] \bigr\rvert
        = \bigl\lvert \pi[f\circ \mathscr T^{\pi}] - p[f\circ \mathscr T^{\pi}] \bigr\rvert.
    \]
    For all $(u, y) \in \real^d \times \real^K$, it holds that
    \begin{align*}
        \bigl\lvert f\circ \mathscr T^{\pi}(u, y) \bigr\rvert &= \abs*{f\Bigl(u + \mat A_{\pi} \bigl(y^\dagger - y\bigr)\Bigr)} \leq g\Bigl(u + \mat A_{\pi} \bigl(y^\dagger - y\bigr)\Bigr) \\
                                   &= 1 + \bigl\lvert u + \mat A_{\pi} (y^\dagger - y) \bigr\rvert^2 \leq 1 + 3\vecnorm{u}^2 +  3 \lvert \mat A_{\pi} y^\dagger \rvert^2 + 3 \vecnorm{\mat A_{\pi} y}^2 \\
                                   &\leq 3 \left(1 + \lvert \mat A_{\pi} y^\dagger \rvert^2\right) \max \left\{ 1, \matnorm{A_{\pi}}^2\right\} g(u, y).
    \end{align*}
    Therefore, using~\eqref{eq:lipschitz_mean_field_auxiliary_1} we deduce that
    \begin{equation}
        \label{eq:stability_T_first_term}
        d_{g}(\mathscr T^{\pi}_{\sharp} \pi, \mathscr T^{\pi}_{\sharp} p)
        \leq 3 \left(1 + r^{10}\right) r^8 \, d_{g}(\pi, p).
    \end{equation}

    \paragraph{Bounding the second term in~\eqref{eq:main_equation_filtering}}
    Let $f\colon \real^d \to \real$ again denote a function satisfying the inequality $\abs{f} \leq g$, with $g(u) = 1 + \vecnorm{u}^2$,
    It holds that
    \[
        \Bigl\lvert \mathscr T^{\pi}_{\sharp}p[f] - \mathscr T^{p}_{\sharp} p[f] \Bigr\rvert
        = \Bigl\lvert p[f\circ \mathscr T^{\pi}] - p[f\circ \mathscr T^{p}] \Bigr\rvert
        = \Bigl\lvert p[f\circ \mathscr T^{\pi} - f\circ \mathscr T^{p}] \Bigr\rvert.
    \]
    The right-hand side may be rewritten more explicitly as
    \begin{align*}
        \Bigl\lvert p[f\circ \mathscr T^{\pi} - f\circ \mathscr T^{p}] \Bigr\rvert
        &= \abs*{\int_{\real^K}\int_{\real^d} f\left(u + \mat A_{\pi}\bigl(y^\dagger - y\bigr)\right) - f\left(u + \mat A_{p}\bigl(y^\dagger - y\bigr)\right) \, p(u,y) \, \d u \, \d y}.
    \end{align*}
    We apply a change of variable in order rewrite the  integral in this
    identity as
    \begin{align}
        \nonumber
        &\int_{\real^K} \int_{\real^d} \bigl(f(u + \mat A_{\pi}  z ) - f(u + \mat A_{p}  z )\bigr) \,  p (u, y^\dagger -  z) \, \d u \, \d z \\
        \label{eq:filtering_second_term}
        \qquad &= \int_{\real^K} \int_{\real^d} f(v) \Bigl( p (v - \mat A_{\pi} z, y^\dagger -  z) -  p (v - \mat A_{p} z, y^\dagger -  z) \Bigr) \d v \, \d z.
    \end{align}
    It follows from the technical~\cref{lemma:lipschitz_density} proved in~\cref{app:A2} that
    \begin{align*}
        &\Bigl\lvert  p (v - \mat A_{\pi} z, y^\dagger -  z) -  p (v - \mat A_{p} z, y^\dagger -  z) \Bigr \rvert \\
        &\qquad \leq  C\lvert \mat A_{\pi} z - \mat A_{p} z \rvert \exp \left(- \frac{1}{4} \left(\vecnorm*{y^\dagger - z}_{\mat \Gamma}^2 + \min \left\{ \vecnorm*{v - \mat A_{\pi} z}_{K}^2, \vecnorm*{v - \mat A_{p} z}_{\mat K}^2 \right\}  \right) \right).
    \end{align*}
    Using this inequality,
    we first bound the inner integral in~\eqref{eq:filtering_second_term} for fixed $z \in \real^K$.
    Keeping only the terms that depend on~$v$, we obtain that
    \begin{align*}
        &\int_{\real^d} \lvert f(v) \rvert \exp \left(- \frac{1}{4} \min \left\{ \vecnorm*{v - \mat A_{\pi} z}_{\mat K}^2, \vecnorm*{v - \mat A_{p} z}_{\mat K}^2 \right\}   \right) \, \d v \\
        &\qquad =
         \int_{\real^d} \lvert f(v) \rvert \max \left\{ \exp \left(- \frac{1}{4} \vecnorm*{v - \mat A_{\pi} z}_{\mat K}^2 \right),  \exp \left(- \frac{1}{4} \vecnorm*{v - \mat A_{p} z}_{\mat K}^2   \right)  \right\} \, \d v \\
        &\qquad \leq
        \int_{\real^d} \lvert f(v) \rvert  \exp \left(- \frac{1}{4} \vecnorm*{v - \mat A_{\pi} z}_{\mat K}^2   \right) \d v + \int_{\real^d} \lvert f(v) \rvert  \exp \left(- \frac{1}{4} \vecnorm*{v - \mat A_{p} z}_{\mat K}^2   \right) \d v .
    \end{align*}
    Since $\lvert f(v) \rvert \leq 1 + \vecnorm{v}^2$,
    it is clear that this expression is bounded from above by
    \[
        C \Bigl(1 + \lvert \mat A_{\pi} z \rvert^2 + \lvert \mat A_{p} z \rvert^2  \Bigr) \leq C r^8 \left(1 + \vecnorm{z}^2 \right),
    \]
    where we used~\eqref{eq:lipschitz_mean_field_auxiliary_1} in the last inequality.
    The remaining integral in the~$z$ direction can be bounded similarly:
    \begin{align}
        \nonumber
        d_{g} (\mathscr T^{\pi}_{\sharp} p, \mathscr T^{p}_{\sharp} p)
        &\leq C r^8 \int_{\real^K}  \left(1 + \vecnorm{z}^2 \right) \lvert \mat A_{\pi} z - \mat A_{p} z \rvert \exp \left(- \frac{1}{4} \left(\vecnorm*{y^\dagger - z}_{\mat \Gamma}^2 \right) \right)  \, \d z \\
        \label{eq:stability_T_second_term}
        &\leq C r^{11} \matnorm{\mat A_{\pi} - \mat A_{p}} \leq C r^{18} d_{g}(\pi, p),
    \end{align}
    where we used the bound~\eqref{eq:lipschitz_mean_field_auxiliary_2} in the last inequality.
    Combining~\eqref{eq:stability_T_first_term} and \eqref{eq:stability_T_second_term},
    we deduce the statement.
\end{proof}

\subsection{Additional Stability Results for the Gaussian Projected Filter}
\label{sub:additional_stability_results}
Next, we show the local Lipschitz continuity of the Gaussian projection map~$\op G$.
To this end, we begin by proving the following result.
(The proof uses an approach similar to that employed in~\cite{2018arXiv181008693D} for obtaining an upper bound on the distance between Gaussians in the usual total variation metric.)

\begin{lemma}
    \label{lemma:dg_gaussians}
    Let $\mu_1 = \normal(\vect m_1, \mat \Sigma_1)$ and $\mu_2 = \normal(\vect m_2, \mat \Sigma_2)$ with $\mat \Sigma_1, \mat \Sigma_2 \in \real^{n\times n}$ symmetric and positive definite.
    It holds that
    \begin{equation}
        \label{eq:dg_gaussians}
        d_g\bigl(\mu_1, \mu_2\bigr)
        \leq \sqrt{\mu_1\left[g^2\right] + \mu_2\left[g^2\right]}
        \left(3\norm*{\mat \Sigma_2^{-1} \mat \Sigma_1 - I_n}_{\rm F} + \vecnorm{\vect m_1 - \vect m_2}_{\mat \Sigma_2}\right),
    \end{equation}
    where $\norm{\placeholder}_{\rm F}$ is the Frobenius matrix norm.
\end{lemma}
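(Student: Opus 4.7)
The plan is to apply Cauchy--Schwarz to the defining integral of $d_g$ to separate the weight $g$ from the density difference, and then to bound the resulting symmetric $\chi^2$-type functional via a path integral along a one-parameter family of Gaussians joining $\mu_1$ to $\mu_2$, in the spirit of the approach taken in~\cite{2018arXiv181008693D} for the classical total variation metric.

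First, I would rewrite $|\rho_1-\rho_2| = \sqrt{\rho_1+\rho_2}\cdot|\rho_1-\rho_2|/\sqrt{\rho_1+\rho_2}$ inside the integrand of $d_g(\mu_1,\mu_2) = \int g\,|\rho_1-\rho_2|\,\d v$ and apply Cauchy--Schwarz to obtain
\[
    d_g(\mu_1,\mu_2)
    \leq \sqrt{\mu_1[g^2] + \mu_2[g^2]}\;\sqrt{\int\frac{(\rho_1-\rho_2)^2}{\rho_1+\rho_2}\,\d v}.
\]
The first factor already matches the one appearing in~\eqref{eq:dg_gaussians}, so the remaining task is to show that the square root of the integral on the right is bounded above by $3\,\norm*{\mat \Sigma_2^{-1}\mat \Sigma_1 - I_n}_{\rm F} + \vecnorm{\vect m_1-\vect m_2}_{\mat \Sigma_2}$.

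Second, to exploit the Gaussian structure, I would introduce a smooth one-parameter family $\rho_t = \normal(\vect m_t, \mat \Sigma_t)$, $t\in[0,1]$, joining $\rho_0=\rho_1$ to $\rho_1=\rho_2$, with $\vect m_t = (1-t)\vect m_1 + t\vect m_2$ and $\mat \Sigma_t$ chosen such that $\rho_t$ is dominated pointwise by a constant multiple of $\rho_1+\rho_2$ (the natural-parameter interpolation of the Gaussian exponential family is a reasonable candidate). Using the identity $\rho_1-\rho_2 = -\int_0^1 \rho_t\,\partial_t\log\rho_t\,\d t$ together with the pointwise bound $\rho_t \lesssim \rho_1+\rho_2$ and Minkowski's inequality in $t$ yields
\[
    \sqrt{\int\frac{(\rho_1-\rho_2)^2}{\rho_1+\rho_2}\,\d v}
    \lesssim \int_0^1 \sqrt{\int \rho_t\,(\partial_t\log\rho_t)^2\,\d v}\,\d t.
\]
The inner integral is the Fisher information of the Gaussian path, and a direct Gaussian moment calculation gives the explicit formula $\vecnorm{\dot{\vect m}_t}_{\mat \Sigma_t}^2 + \tfrac12\,\norm*{\mat \Sigma_t^{-1}\dot{\mat \Sigma}_t}_{\rm F}^2$. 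Along the chosen path these two terms can be bounded uniformly in $t$ by $\vecnorm{\vect m_1-\vect m_2}_{\mat \Sigma_2}^2$ and $\norm*{\mat \Sigma_2^{-1}\mat \Sigma_1-I_n}_{\rm F}^2$ respectively, with multiplicative constants absorbed into the coefficient $3$; integrating in $t$ then gives~\eqref{eq:dg_gaussians}.

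The hard part will be the choice of interpolation in the second step: it must simultaneously ensure the pointwise domination $\rho_t \lesssim \rho_1+\rho_2$ (so that the denominator $\sqrt{\rho_1+\rho_2}$ can be eliminated cleanly after Minkowski) and deliver the precise constants $3$ and $1$ claimed in~\eqref{eq:dg_gaussians}. An affine interpolation of $\mat \Sigma_t$ does not obviously give the pointwise bound without additional hypotheses on $\mat \Sigma_1,\mat \Sigma_2$, so a more delicate interpolation, such as the natural-parameter one or a geodesic in the Bures--Wasserstein geometry of Gaussians, is likely required, and tracking constants through it is the main computational work. This parallels the analogous subtlety in~\cite{2018arXiv181008693D} for the unweighted total variation distance between Gaussians.
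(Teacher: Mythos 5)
Your first step is sound: the Cauchy--Schwarz factorization with weight $\sqrt{\rho_1+\rho_2}$ correctly produces the factor $\sqrt{\mu_1[g^2]+\mu_2[g^2]}$, and it is in fact essentially how the paper proves its key tool (the generalized Pinsker inequality in \cref{sub:generalized_pinsker}, which uses the same trick with weight $\max\{1, \d\mu_1/\d\mu_2\}$). The genuine gap is in your second step: the pointwise domination $\rho_t \leq C(\rho_1+\rho_2)$ with a uniform constant is \emph{false} for well-separated Gaussians, and no choice of interpolation can rescue it. Take $\mu_1 = \normal(-m,1)$, $\mu_2 = \normal(m,1)$ in dimension one. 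Any continuous path of Gaussians from $\mu_1$ to $\mu_2$ must either pass through a Gaussian concentrated near the origin --- where $\rho_1(0)+\rho_2(0) \sim \e^{-m^2/2}$ while the interpolant has density of order one, so the domination constant blows up like $\e^{m^2/2}$ --- or inflate its variance above $1$, in which case domination fails in the tails, since a flatter Gaussian eventually exceeds any multiple of $\rho_1+\rho_2$ as $\vecnorm{y}\to\infty$. Both of your named candidates exhibit the first failure mode: the natural-parameter interpolation satisfies $\rho_t = \rho_1^{1-t}\rho_2^{t}\,\e^{\delta_t}$ with a Jensen-gap exponent $\delta_t = (1-t)A(\theta_0)+tA(\theta_1)-A(\theta_t) \geq 0$ that equals $m^2/2$ at $t=1/2$ in this example, and the Bures--Wasserstein geodesic has midpoint $\normal(0,1)$ with the same blow-up. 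So the Minkowski-plus-Fisher-information bound cannot be closed with constants independent of $(\vect m_i, \mat\Sigma_i)$, which is exactly what the lemma requires.

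The paper sidesteps this by never comparing densities pointwise. It splits into two regimes according to the eigenvalues $\lambda_i$ of $\mat\Sigma_1^{-1}\mat\Sigma_2$: if some $\lambda_k \geq 2$, the trivial bound $d_g(\mu_1,\mu_2) \leq \mu_1[g]+\mu_2[g] \leq \sqrt{2}\sqrt{\mu_1[g^2]+\mu_2[g^2]}$ already implies \eqref{eq:dg_gaussians} because then $3\norm{\mat\Sigma_2^{-1}\mat\Sigma_1 - I_n}_{\rm F} \geq 3/2 \geq \sqrt{2}$ (this is where the constant $3$ is spent); otherwise $0<\lambda_i<2$ for all $i$, and the weighted Pinsker inequality $d_g(\mu_1,\mu_2)^2 \leq 2\bigl(\mu_1[g^2]+\mu_2[g^2]\bigr)\,\kl{\mu_2}{\mu_1}$ is combined with the closed-form Gaussian KL divergence and the elementary bound $x^{-1}-1+\log x \leq (1-x^{-1})^2$ on $(0,2)$. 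Note that mean separation never causes trouble there: the mean contribution to the Gaussian KL is exactly $\tfrac12\vecnorm{\vect m_1 - \vect m_2}_{\mat\Sigma_2}^2$, with no pointwise comparison needed. If you wanted to salvage your path argument you would need an analogous case split plus quantitative control of the Jensen-gap factor $\e^{\delta_t}$ in the near regime --- at which point you would essentially be re-deriving Pinsker along the path. The direct repair is to replace your step 2 by the KL route: your step 1 is already the core of the weighted Pinsker proof.
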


\begin{proof}
    We denote by $(\lambda_i)_{1\leq i\leq n}$ the eigenvalues of~$\mat \Sigma_1^{-1} \mat \Sigma_2$.
    Being the product of two symmetric positive definite matrices,
    $\mat \Sigma_1^{-1} \mat \Sigma_2$ is real diagonalizable with real and positive eigenvalues.
    We note that
    \begin{align*}
        d_g(\mu_1, \mu_2) \leq \mu_1[g] + \mu_2[g] \leq \sqrt{\mu_1[g^2]} + \sqrt{\mu_2[g^2]}
        \leq \sqrt{2} \sqrt{\mu_1[g^2] + \mu_2[g^2]}.
    \end{align*}
    Now assume $\max\{\lambda_j: 1 \leq j \leq n\} \geq 2$.
    In this case there is integer $k$ such that $\lambda_k \geq 2$ and so
    \[
        3\norm{\mat \Sigma_2^{-1} \mat \Sigma_1 - I_n}_{\rm F}
        = 3 \sqrt{ \sum_{i=1}^{n} \lvert \lambda_i^{-1} - 1 \rvert^2 }
        \geq 3  \abs*{\lambda_k^{-1} - 1}
        \geq \frac{3}{2} \geq \sqrt{2}.
    \]
    Combining the previous two inequalities proves~\eqref{eq:dg_gaussians} in the case where
    \[
        \max\{\lambda_j: 1 \leq j \leq n\} \geq 2.
    \]
    We assume from now on that $0 < \lambda_i < 2$ for all $i \in \range{1}{n}$.
    Employing the same reasoning  as in~\cite[Lemma 3.1]{MR2730330}, we can prove
    \begin{align}
        \label{eq:weighted_pinsker}
        d_g(\mu_1, \mu_2)^2
        \leq 2 \Bigl(\mu_1\bigl[g^2\bigr] + \mu_2\bigl[g^2\bigr]\Bigr) \, \kl{\mu_2}{\mu_1},
    \end{align}
    where $\kl{\mu_2}{\mu_1}$ is the Kullback--Leibler (KL) divergence of $\mu_2$ from $\mu_1$,
    given by
    \begin{equation}
        \label{eq:kl}
        \kl{\mu_1}{\mu_2} := \int_{\real^d} \frac{\d \mu_1}{\d \mu_2} \, \log \left(\frac{\d \mu_1}{\d \mu_2}\right)  \, \d \mu_2.
    \end{equation}
    The proof of the inequality~\eqref{eq:weighted_pinsker} is presented in~\cref{sub:generalized_pinsker} in~\cref{app:A2} for completeness.
    The KL divergence between two Gaussians has a closed expression, which we rewrite in terms of the function $f_1(x) := x^{-1} - 1+\log x$,
    \begin{align*}
        \kl{\mu_2}{\mu_1}
        &= \frac{1}{2} \left( \trace (\mat \Sigma_2^{-1} \mat \Sigma_1 - I_n) + (\vect m_1 - \vect m_2)^\t \mat \Sigma_2^{-1} (\vect m_1 - \vect m_2) - \log \det (\mat \Sigma_2^{-1} \mat \Sigma_1) \right)\\
        &= \frac{1}{2} \left(\sum_{i=1}^{n} (\lambda_i^{-1} - 1) + (\vect m_1 - \vect m_2)^\t \mat \Sigma_2^{-1} (\vect m_1 - \vect m_2) + \sum_{i=1}^{n} \log \lambda_i \right) \\
        &= \frac{1}{2} \left( \sum_{i=1}^{n} f_1(\lambda_i) +\vecnorm{\vect m_1 - \vect m_2}_{\mat \Sigma_2}^2\right) .
    \end{align*}
    The function $f_1$ is pointwise bounded from above by $f_2(x) := (1 - x^{-1})^2$ for~$x \in (0, 2)$.
    To see this consider the function ${\varphi}(x)=f_2(x)-f_1(x)$.
    It suffices to note that $\varphi(1) = 0$ and
    \[
        \varphi'(x)
        =  -\frac{1}{x} \left(1 -  \frac{1}{x} \right) \left(1 - \frac{2}{x} \right).
    \]
    Since $\varphi'(x) < 0$ for $x \in (0, 1)$ and $\varphi'(x) > 0$ for $x \in (1, 2)$,
    the desired upper bound on $f_1(x)$ by~$f_2(x)$ for $x\in (0,2)$ follows.
    Therefore, since
    \[
        \sum_{i=1}^{n} f_1(\lambda_i)
        \leq
        \sum_{i=1}^{n} f_2(\lambda_i)
        = \norm{\mat \Sigma_2^{-1} \mat \Sigma_1 - I_n}_{\rm F}^2,
    \]
    we have
    \begin{align*}
        \kl{\mu_2}{\mu_1}
        &\le \frac{1}{2} \Bigl( \norm{\mat \Sigma_2^{-1} \mat \Sigma_1 - I_n}_{\rm F}^2+\vecnorm{\vect m_1 - \vect m_2}_{\mat \Sigma_2}^2 \Bigr) \\
        &\le \frac{1}{2} \Bigl( \norm{\mat \Sigma_2^{-1} \mat \Sigma_1 - I_n}_{\rm F} +\vecnorm{\vect m_1 - \vect m_2}_{\mat \Sigma_2} \Bigr)^2.
    \end{align*}
    Consequently, we deduce from \eqref{eq:weighted_pinsker} that
    \begin{align*}
        d_g(\mu_1, \mu_2)
        &\leq
        \sqrt{\mu_1\bigl[g^2\bigr] + \mu_2\bigl[g^2\bigr]}
        \Bigl( \norm{\mat \Sigma_2^{-1} \mat \Sigma_1 - I_n}_{\rm F} + \vecnorm{\vect m_1 - \vect m_2}_{\mat \Sigma_2}  \Bigr),
    \end{align*}
    which concludes the proof.
\end{proof}

\begin{lemma}
    \label{lemma:lipschitz_G}
    For all $R \geq 1$,
    there is $L_{\op G} = L_{\op G}(R, n)$ such that for all $\mu_1, \mu_2 \in \mathcal P_R(\real^n)$,
    the following inequality holds:
    \begin{equation}
        \label{eq:lipschitz_G}
        d_g(\op G \mu_1, \op G \mu_2) \leq L_{\op G}(R, n)  \, d_g(\mu_1, \mu_2),
    \end{equation}
\end{lemma}

Before proving this lemma we make two remarks on the form of the result.

\begin{remark}
    \Cref{lemma:lipschitz_p,lemma:lipschitz_Q} on the Lipschitz continuity of $\op P$ and $\op Q$ do not depend on the specific choice of the function $g$ in \cref{definition:weighted_total_variation};
    they hold true also for the usual total variation distance.
    In contrast,
    in the proof of~\cref{lemma:lipschitz_G} the specific choice of~$g$ is used
    in order to control differences between moments by means of $d_g(\placeholder, \placeholder)$.
\end{remark}

\begin{remark}
    A simple example shows that the constant $L_{\op G}(R, n)$ in~\eqref{eq:lipschitz_G} is divergent in the limit~$R \to \infty$,
    indicating that the Gaussian projection operator~$\op G$ is not globally Lipschitz continuous.
    Specifically,
    take $\varepsilon \leq \frac{1}{2}$ and consider the probability distributions
    \[
        \mu_1 = \normal(0, \varepsilon), \qquad
        \mu_2 =  \varepsilon \delta_{-1} + (1-2 \varepsilon) \mu_1 +\varepsilon \delta_{1}.
    \]
    By definition of $d_g$, it follows that
    \begin{align}
        \notag
        d_g(\mu_1, \mu_2)
        &\le \varepsilon g(-1) + 2 \varepsilon \mu_1[g] + \varepsilon g(1) \\
        \label{eq:total_variation_no_projection}
        &= 2 \varepsilon + 2 \varepsilon \left(1 + \varepsilon\right) + 2 \varepsilon \leq 8 \varepsilon.
    \end{align}
    On the other hand, $\op G \mu_2 = \normal\bigl(0, 2 \varepsilon + (1 - 2 \varepsilon) \varepsilon\bigr)$,
    and the variance of this Gaussian is bounded from below by $3 \varepsilon - 2 \varepsilon^2 \geq 2 \varepsilon$.
    Consequently, it holds that
    \[
        d_g(\op G \mu_1, \op G \mu_2)
        \geq d_1(\op G \mu_1, \op G \mu_2)
        \geq d_1\bigl(\normal(0, \varepsilon), \normal(0, 2 \varepsilon)\bigr)
        = d_1\bigl(\normal(0, 1), \normal(0, 2)\bigr),
    \]
    where $d_1$ is the usual total variation metric.
    The right-hand side of this inequality does not depend on $\varepsilon$,
    and so
    \[
        \frac{d_g(\op G \mu_1,\op G \mu_2)}{d_g(\mu_1, \mu_2)} \xrightarrow[\varepsilon \to 0]{} \infty.
    \]
    This proves that $\op G$ is not globally Lipschitz.
\end{remark}

\begin{proof}[Proof of Lemma \ref{lemma:lipschitz_G}]
    Let $\vect m_i = \mathcal M(\mu_i)$ and $\mat \Sigma_i = \mathcal C(\mu_i)$ for $i = 1, 2$.
    By \cref{lemma:moment_bound},
    it holds that
    \[
        \vecnorm{\vect m_1 - \vect m_2}_{\mat \Sigma_2} \leq
        \frac{1}{2} \sqrt{\norm{\mat \Sigma_2^{-1}}} \, d_g(\mu_1, \mu_2).
    \]
    In addition, also using \cref{lemma:moment_bound}, and the facts that $\norm{A}_{\rm F} \leq \sqrt{n} \, \norm{A}$  and
    $\norm{A B} \leq \norm{A} \norm{B}$ for any matrices $A, B \in \real^{n\times n}$,
    \[
        \norm{\mat \Sigma_2^{-1} \mat \Sigma_1 - I_n}_{\rm F} \leq \sqrt{n}\norm{\mat \Sigma_2^{-1}} \left(1 + \frac{1}{2}\vecnorm{\vect m_1 + \vect m_2}\right) \, d_g(\mu_1, \mu_2).
    \]
    \Cref{lemma:dg_gaussians} then gives
    \begingroup
    \footnotesize
    \begin{align*}
        d_g(\op G \mu_1, \op G \mu_2)
        &\leq \sqrt{\op G \mu_1\left[g^2\right] + \op G \mu_2\left[g^2\right]}
        \left(3 \sqrt{n}\left(1 + \frac{1}{2} \vecnorm{\vect m_1 + \vect m_2}\right) \norm*{\mat \Sigma_2^{-1}} + \frac{1}{2}\sqrt{\norm*{\mat \Sigma_2^{-1}}} \right) d_g (\mu_1, \mu_2) \\
        &\leq \sqrt{\op G \mu_1[g^2] + \op G \mu_2[g^2]}
        \left(3 \sqrt{n}\left(1 + \frac{1}{2}\vecnorm{\vect m_1 + \vect m_2}\right) + 1 \right) \left(1 + \norm*{\mat \Sigma_2^{-1}}\right) d_g (\mu_1, \mu_2).
    \end{align*}
    \endgroup
    Clearly $\op G \mu_i \left[g^2\right] \leq C\bigl(1 + \vecnorm{\vect m_i}^4 + \norm{\mat \Sigma_i}^2\bigr)$ for our choice of $g$,
    for an appropriate constant~$C$,
    and so we conclude by use of the Young inequality that
    \[
        d_g(\op G \mu_1, \op G \mu_2)
        \leq C
        \left(1 + \vecnorm{\vect m_1}^3 + \vecnorm{\vect m_2}^3 + \norm*{\mat \Sigma_1}^{3/2} + \norm*{\mat \Sigma_2}^{3/2}\right)
        \bigl(1 + \norm*{\mat \Sigma_2^{-1}}\bigr) \,
        d_g (\mu_1, \mu_2),
    \]
     for another constant $C$ depending only on $n$; this completes the proof.
     \end{proof}

     We end the subsection by showing local Lipschitz continuity of the conditioning operator~$\op B_j$ over the set~$\mathcal G(\real^d \times \real^K)$.

\begin{lemma}
    \label{lemma:lipschitz_l2}
    For all $R \geq 1$,
    there exists a constant $L_{\op B} = L_{\op B}(R, \kappa_y)$ such that for all Gaussian probability measures $\pi, p \in \mathcal G_R(\real^d \times \real^K)$,
    \begin{equation}
        \forall j \in \range{0}{J-1}, \qquad
        \label{eq:dg_conditioning}
        d_{g}\bigl(\op B_j \pi, \op B_j p\bigr)
        \leq
        L_{\op B}(R, \kappa_y) \, d_{g}\bigl(\pi, p\bigr).
    \end{equation}
\end{lemma}
\begin{proof}
    Let $\pi = \normal({\vect \tau}, \mat \Upsilon)$ and $p = \normal({\vect t}, \mat U)$.
    We use the shorthand notation $y^\dagger = y^\dagger_{j+1}$.
    It is well known that if $Z \sim \normal({\vect \tau}, \mat \Upsilon)$,
    for a vector $\vect \tau \in \real^{d+K}$ and a matrix $\mat \Upsilon \in \real^{(d+K)\times (d+K)}$,
    then the conditional distribution of~$Z_u$ given that $Z_y = y^\dagger$ is given by
    \begin{align}
        \label{eq:formula_conditional_gaussian}
        Z_u | Z_y = y^\dagger \sim \normal \bigl( {\vect \tau}_u + \mat \Upsilon_{uy} \mat \Upsilon_{yy}^{-1} (y^\dagger - {\vect \tau}_y), \mat \Upsilon_{uu} -  \mat \Upsilon_{uy} \mat \Upsilon_{yy}^{-1} \mat \Upsilon_{yu}\bigr).
    \end{align}
    Denoting $\op B_j \pi = \normal(\vect \lambda, \mat \Delta)$ and $\op B_j p = \normal(\vect \ell, \mat D)$,
    we have
    \begin{align*}
        \vect \lambda - \vect \ell
        &= {\vect \tau}_u - {\vect t}_u + \mat \Upsilon_{uy} \mat \Upsilon_{yy}^{-1} (y^\dagger - {\vect \tau}_y) - \mat U_{uy} \mat U_{yy}^{-1} (y^\dagger - {\vect t}_y) \\
        &= {\vect \tau}_u - {\vect t}_u
        + (\mat \Upsilon_{uy} - \mat U_{uy}) \mat \Upsilon_{yy}^{-1} (y^\dagger - {\vect \tau}_y)  \\
        &\qquad + \mat U_{uy} \mat U_{yy}^{-1} (\mat U_{yy} - \mat \Upsilon_{yy})\mat \Upsilon_{yy}^{-1} (y^\dagger - {\vect \tau}_y)
        - \mat U_{uy} \mat U_{yy}^{-1} (\vect \tau_y - \vect t_y).
    \end{align*}
    Since the 2-norm of any submatrix is bounded from above by the 2-norm of the matrix that contains it,
    we deduce
    \[
        \bigl\lvert \vect \lambda - \vect \ell \bigr\rvert
        \leq
        2 R^4 \left\lvert  {\vect \tau} - {\vect t} \right\rvert
        + 2 R^6 (R + \kappa_y) \norm{\mat \Upsilon - \mat U}.
    \]
    Similarly
    \(
        \norm{ \mat \Delta - \mat D }
        = \norm{ \mat \Upsilon_{uu} - \mat U_{uu} - \mat \Upsilon_{uy} \mat \Upsilon_{yy}^{-1} \mat \Upsilon_{yu} + \mat U_{uy} \mat U_{yy}^{-1} \mat U_{yu} } \leq 4 R^8 \norm{\mat \Upsilon - \mat U}.
    \)
    By Schur decomposition,
    it holds that
    \begin{align}
        \begin{pmatrix} \mat U_{uu} & \mat \mat U_{uy} \\ \mat U_{uy}^\t & \mat U_{yy} \end{pmatrix}
        = \begin{pmatrix} \mat I_{d} & \mat U_{uy} \mat U_{yy}^{-1} \\ 0 & \mat I_{K} \end{pmatrix}
        \begin{pmatrix} \mat U_{uu} -  \mat U_{uy} \mat U_{yy}^{-1} \mat U_{uy}^\t  & 0 \\ 0 & \mat U_{yy} \end{pmatrix}\begin{pmatrix} I_d & 0 \\ \mat U_{yy}^{-1} \mat U_{uy}^\t  & I_K \end{pmatrix}.
    \end{align}
    Since $\mat D = \mat U_{uu} -  \mat U_{uy} \mat U_{yy}^{-1} \mat U_{uy}^\t$,
    we have by the Courant--Fischer theorem
    \begingroup \footnotesize
    \begin{align*}
        \lambda_{\min}(\mat U)
        &= \min \left\{ \frac{x^\t \mat U x}{x^\t x} : x \in \real^{d+K} \setminus \{0\} \right\}
        \leq \min \left\{ \frac{x^\t \mat U x}{x^\t x} : x =
            \begin{pmatrix}
                u \\
                - \mat U_{yy}^{-1} \mat U_{uy}^\t u
            \end{pmatrix}
            \text{ and }
        u \in \real^{d} \setminus \{0\} \right\} \\
        &= \min \left\{ \frac{u^\t D u}{x^\t x} : x =
            \begin{pmatrix}
                u \\
                - \mat U_{yy}^{-1} \mat U_{uy}^\t u
            \end{pmatrix}
            \text{ and }
        u \in \real^{d} \setminus \{0\} \right\} \\
        &\leq
        \min \left\{ \frac{u^\t D u}{u^\t u} : u \in \real^{d} \setminus \{0\} \right\}
        = \lambda_{\min}(D).
    \end{align*}
    \endgroup
    Therefore~$\norm{D^{-1}} \leq \norm{\mat U^{-1}} \leq R^2$.
    Using \cref{lemma:dg_gaussians}, we then have
    \begin{align*}
        d_{g} (\op B_j \pi, \op B_j p)
        &\leq \sqrt{\op B_j \pi[g^2] + \op B_j p[g^2]}
        \left(3\norm{\mat D^{-1} \mat \Delta - I_d}_{\rm F} + \vecnorm{\vect \lambda - \vect \ell}_{\mat D}\right) \\
        &\leq \sqrt{\op B_j \pi[g^2] + \op B_j p[g^2]}
        \left(3\sqrt{d} \norm{\mat \Delta - \mat D} + \vecnorm{\vect \lambda - \vect \ell}\right) \left(1 + \norm{\mat D^{-1}}\right)  \\
        &\leq C(R, \kappa_y)
        \bigl(\norm{\mat \Upsilon - \mat U} + \vecnorm{{\vect \tau} - {\vect t}}\bigr),
    \end{align*}
    for an appropriate constant $C$ depending on $R$ and $\kappa_y$.
    By \cref{lemma:moment_bound} it holds that
    \[
        \norm{\mat \Upsilon - \mat U}
        + \bigl\lvert {\vect \tau} - {\vect t} \bigr\rvert
        \leq \left(\frac{3}{2} + \frac{1}{2}\vecnorm{{\vect \tau} + {\vect t}}\right) \, d_{g}(\pi, p),
    \]
    enabling us to conclude.
\end{proof}

\section{Technical Results for Corollaries \ref{theorem:main_theorem2} and \ref{theorem:main_theorem_gaussian_projection2}}
\label{appendix:C}

\begin{lemma}
    \label{lemma:new_lemma_revision}
    Suppose that $\Psi\colon \real^d \to \real^d$ and $\vect h \colon \real^d \to \real^K$ are functions
    such that~\cref{assumption:ensemble_kalman} is satisfied,
    and let $(\Psi_{n})_{n \in \nat}$ and $(h_{n})_{n \in \nat}$ be sequences of operators such that
    \[
        \Psi_{n} \xrightarrow[n \to \infty]{L^{\infty}(\real^d)} \Psi
        \qquad \text{ and } \qquad
        \vect h_{n} \xrightarrow[n \to \infty]{L^{\infty}(\real^d)} \vect h.
    \]
    Let $\op P_{n}$ and $\op Q_{n}$ denote the maps~\eqref{eq:Markov_kernel}
    and~\eqref{eq:definition_Q} associated with the functions~$\Psi_n$ and $\vect h_n$
    and assume that they too satisfy~\cref{assumption:ensemble_kalman}. Then
    \begin{align}
        \label{eq:limits_new_lemma}
        \sup_{\mu \in \mathcal P(\real^d)} d_g(\op P \mu, \op P_n \mu) \xrightarrow[n \to \infty]{} 0
        \qquad \text{and} \qquad
        \sup_{\mu \in \mathcal P_R(\real^d)} d_g(\op Q \mu, \op Q_n \mu) \xrightarrow[n \to \infty]{} 0,
    \end{align}
     for all $R \geq 1$ for the second statement.
\end{lemma}
\begin{proof}
    For all $\mu \in \mathcal P(\real^d)$,
    the probability measures $\op P \mu$ and $\op P_n\mu$ have Lebesgue densities.
    By the definition in equation~\eqref{eq:Markov_kernel} of~$\op P$ and~\cref{remark:total_variation},
    it holds that
    \[
        d_g(\op P \mu, \op P_n \mu)
        = \frac{1}{\sqrt{(2\pi)^d \det \mat \Sigma}}  \int_{\real^d} g(u)
        \left\lvert \int_{\real^d} \Delta_n(u, v) \, \mu(\d v) \right\rvert
        \, \d u,
    \]
    where
    \[
        \Delta_n(u, v) :=
         \exp \left( - \frac{1}{2} \vecnorm*{u - \vect \Psi(v)}_{\mat \Sigma}^2 \right)
        - \exp \left( - \frac{1}{2} \vecnorm*{u - \vect \Psi_n(v)}_{\mat \Sigma}^2 \right).
    \]
    Letting $\psi_s(v) = (1-s) \Psi(v) + s \Psi_n(v)$ and using the
    same reasoning as in~\cref{lemma:elementary_gaussians},
    we obtain that
    \begin{subequations}
        \begin{align}
            \notag
            \bigl\lvert \Delta_n(u, v) \bigr\rvert
        &\leq \bigl\lvert \Psi(v) - \Psi_n(v) \bigr\rvert_{\mat \Sigma}
        \int_{0}^{1} \bigl\lvert u - \psi_s \bigr\rvert_{\mat \Sigma} \, \exp \left(- \frac{1}{2} \bigl\lvert u - \psi_s \bigr\rvert_{\mat \Sigma}^2 \right) \, \d s \\
        \label{eq:second_ineq}
        &\leq
        \bigl\lvert \Psi(v) - \Psi_n(v) \bigr\rvert_{\mat \Sigma} \int_{0}^{1} \Bigl( \left\lvert u \right\rvert_{\mat \Sigma} + \left\lvert \psi_s \right\rvert_{\mat \Sigma} \Bigr) \, \exp \left(- \frac{1}{3} \bigl\lvert u \bigr\rvert_{\mat \Sigma}^2 + \bigl\lvert \psi_s \bigr\rvert_{\mat \Sigma}^2 \right) \, \d s \\
        \label{eq:third_ineq}
        &\leq C \bigl\lvert \Psi(v) - \Psi_n(v) \bigr\rvert_{\mat \Sigma} \exp \left(- \frac{1}{4} \bigl\lvert u \bigr\rvert_{\mat \Sigma}^2 \right).
        \end{align}
    \end{subequations}
    In~\eqref{eq:second_ineq},
    we used that by Young's inequality, it holds for all $\delta > 0$ that
    \[
        \vecnorm*{u - \psi_s}_{\mat \Sigma}^2
        \geq \frac{1}{1 + \delta} \vecnorm*{u}_{\mat \Sigma}^2 -
        \frac{1}{\delta} \vecnorm*{\psi_s}_{\mat \Sigma}^2.
    \]
    Then in~\eqref{eq:third_ineq},
    we used the bound $\norm{\psi_s}_{L^\infty} \leq \kappa_{\Psi}$,
    which holds for all $s \in [0, 1]$ by~\cref{assumpenum:assumption2}.
    The first limit in~\eqref{eq:limits_new_lemma} then follows immediately.

    For the second limit in~\eqref{eq:limits_new_lemma},
    fix $\mu \in \mathcal P_R(\real^d)$ and note that, by definition~\eqref{eq:definition_Q} of $\op Q$,
    \begin{align}
        \notag
        d_g(\op Q \mu, \op Q_n \mu)
        &\le
        \frac{1}{\sqrt{(2\pi)^K \det \mat \Gamma}}
        \int_{\real^d \times \real^K} \bigl(1 + \lvert u \rvert^2 + \lvert y \rvert^2\bigr)
           \left| \widetilde \Delta_n(u, y)\right| \, \mu(\d u) \, \d y \\
        \label{eq:close_constant_Q}
        &\leq
        \frac{1}{\sqrt{(2\pi)^K \det \mat \Gamma}}
        \int_{\real^d} \bigl(1 + \lvert u \rvert^2 \bigr)
        \left( \int_{\real^K} \bigl(1 + \lvert y \rvert^2 \bigr)
        \bigl\lvert \widetilde \Delta_n(u, y) \bigr\rvert \, \d y \right) \, \mu(\d u),
    \end{align}
    where we introduced
    \[
        \widetilde \Delta_n(u, y) := \exp \left( - \frac{1}{2} \bigl\lvert y - \vect h(u) \bigr\rvert_{\mat \Gamma}^2 \right) - \exp \left( - \frac{1}{2} \bigl\lvert y - \vect h_n(u) \bigr\rvert_{\mat \Gamma}^2 \right).
    \]
    Using the same strategy as above,
    we obtain that
    \[
        \forall (u, y) \in \real^d \times \real^K, \qquad
        \bigl\lvert \widetilde \Delta_n(u, y) \bigr\rvert
        \leq C \bigl\lvert h(u) - h_n(u) \bigr\rvert_{\mat \Gamma} \exp \left(- \frac{1}{4} \bigl\lvert y \bigr\rvert_{\mat \Gamma}^2 \right).
    \]
    Substituting in~\eqref{eq:close_constant_Q} gives the second limit in~\eqref{eq:limits_new_lemma}.
\end{proof}

\begin{proposition}
\label{prop:1}
    Suppose that $\Psi\colon \real^d \to \real^d$ and $\vect h \colon \real^d \to \real^K$ are functions taking constant values,
    and let $(\Psi_{n})_{n \in \nat}$ and $(h_{n})_{n \in \nat}$ be sequences of functions such that
    \[
        \Psi_{n} \xrightarrow[n \to \infty]{L^{\infty}(\real^d)} \Psi
        \qquad \text{ and } \qquad
        \vect h_{n} \xrightarrow[n \to \infty]{L^{\infty}(\real^d)} \vect h.
    \]
    Let $\op P_{n}$ and $\op Q_{n}$ denote the maps~\eqref{eq:Markov_kernel}
    and~\eqref{eq:definition_Q} (associated with the functions~$\Psi_n$ and $\vect h_n$)
    and assume that they too satisfy~\cref{assumption:ensemble_kalman}.
    Denote by $(\mu^n_{j})_{j \in \range{1}{J}}$ the true filtering distribution associated with
    functions $\Psi_n, \vect h_n$.
    Then,
    with the same notation as in~\cref{lemma:new_lemma_revision},
    it holds that
    \[
        \lim_{n \to \infty} \max_{j \in \range{0}{J-1}} d_g(\op G \op Q_n \op P_n \mu^n_j, \op Q_n \op P_n \mu^n_j) = 0.
    \]
\end{proposition}
\begin{proof}
    The conclusion follows from the stronger statement that
    \[
        \lim_{n \to \infty} \sup_{\mu \in \mathcal P(\real^d)} d_g(\op G \op Q_n \op P_n \mu, \op Q_n \op P_n \mu) = 0.
    \]
    Indeed, fix $\mu \in \mathcal P(\real^d)$.
    By the triangle inequality,
    we have that
    \begin{equation}
        \label{eq:3terms}
        d_g(\op G \op Q_n \op P_n \mu, \op Q_n \op P_n \mu)
        \leq
        d_g(\op G \op Q_n \op P_n \mu, \op G \op Q \op P \mu)
        + d_g(\op G \op Q \op P \mu, \op Q \op P \mu)
        +  d_g(\op Q \op P \mu, \op Q_n \op P_n \mu).
    \end{equation}
    Since $\Psi$ and $h$ are constant,
    the probability measure $\op Q \op P \mu$ is a Gaussian distribution independent of~$\mu$,
    and so the second term on the right-hand side is zero.
    For the third term,
    we have using~\cref{lemma:lipschitz_Q}, that
    \begin{align*}
        d_g(\op Q \op P \mu, \op Q_{n} \op P_{n} \mu)
        &\leq
        d_g(\op Q \op P \mu, \op Q_n \op P \mu)
        + d_g(\op Q_n \op P \mu, \op Q_n \op P_n \mu) \\
        &\leq
        d_g(\op Q \nu, \op Q_n \nu) + \Bigl(1 + \kappa_{\vect h_n}^2 + \trace(\mat \Gamma) \Bigr) d_g(\op P \mu, \op P_{n} \mu), \qquad \nu := \op P \mu.
    \end{align*}
    Noting that $\nu := \op P \mu$ is a fixed Gaussian measure independent of~$\mu$
    and using~\cref{lemma:new_lemma_revision},
    we deduce that both terms on the right-hand side tend to 0 in the limit~$n \to \infty$,
    uniformly in~$\mu \in \mathcal P(\real^d)$.
    It remains to show that the first term in~\eqref{eq:3terms} also tends to 0 as $n \to \infty$ uniformly in~$\mu \in \mathcal P(\real^d)$.
    In view of the moment bounds in~\cref{lemma:bound_on_QPmu},
    there exist $R \geq 1$ and~$N \in \nat$ such that
    \[
        \forall n \geq N, \qquad \forall \mu \in \mathcal P(\real^d), \qquad
        \op Q_n \op P_n \mu \in \mathcal P_R(\real^d)
        \qquad \text{and} \qquad
        \op Q \op P \mu \in \mathcal P_R(\real^d).
    \]
    Therefore, by the local Lipschitz continuity of~$\op G$ established in \cref{lemma:lipschitz_G},
    and the uniform-in-$\mu$ convergence to~0 of the third term in~\eqref{eq:3terms} that we already proved,
    it holds that
    \[
        \sup_{\mu \in \mathcal P(\real^d)} d_g(\op G \op Q_n \op P_n \mu, \op G \op Q \op P \mu)
        \xrightarrow[n \to \infty]{} 0,
    \]
    which concludes the proof.
\end{proof}


\end{document}